\documentclass[11pt]{article}
\usepackage[margin = 1in]{geometry}

\usepackage{lipsum}
\usepackage[utf8]{inputenc} 
\usepackage{amsfonts}
\usepackage{graphicx}
\graphicspath{ {images/} }
\usepackage{epstopdf}
\usepackage{dsfont}
\usepackage{mathtools}
\usepackage{empheq}
\usepackage{amsfonts}
\usepackage{amsgen,amsthm,amsmath,amstext,amsbsy,amsopn,amssymb,subfigure,stmaryrd,mathabx,mathrsfs}
\usepackage{comment}
\usepackage[font=footnotesize,labelfont=bf]{caption}
\usepackage{xcolor}

\usepackage{blkarray}

\usepackage{url}
\usepackage{longtable}
\usepackage{mathtools}
\usepackage{multirow}
\usepackage{array}

\usepackage{tikz}

\usepackage[
backend=biber,
natbib=true,
style=alphabetic,
maxbibnames=15,
maxalphanames=10,
minalphanames=6,
doi=false,
isbn=false,
url=false,
eprint=false,
backref=true,
]{biblatex}
\usepackage{enumitem}
\usepackage{hyperref}
\usepackage{booktabs}
\usepackage{algorithm}
\usepackage{algpseudocode}
\usepackage{xcolor}

\usepackage[T1]{fontenc}

\usepackage[nameinlink,capitalise]{cleveref}
\crefformat{cond}{(#2#1#3)}
\crefname{condition}{Condition}{Conditions}
\Crefmultiformat{cond}
  {#2(#1)#3}{ and #2(#1)#3}{, #2(#1)#3}{, and #2(#1)#3}

\usepackage{thm-restate}
\def\N{\mathbb N}
\newcommand{\cB}{\mathcal{B}}
\newcommand{\he}{\mathrm{he}}
\newcommand\numberthis{\addtocounter{equation}{1}\tag{\theequation}}
\newcommand{\STAT}{{\mathtt{STAT}}}

\DeclarePairedDelimiter\floor{\lfloor}{\rfloor}

\ifpdf
  \DeclareGraphicsExtensions{.eps,.pdf,.png,.jpg}
\else
  \DeclareGraphicsExtensions{.eps}
\fi

\newcommand{\1}{{\mathbf{1}}}

\newcommand{\E}{{\mathbb{E}}}

\newcommand{\R}{{\mathbb{R}}}

\newcommand{\Dirac}[1]{\mathsf{D}_{#1}}      

\newcommand{\cS}{\mathcal{S}}
\newcommand{\cA}{\mathcal{A}}

\newcommand{\cN}{\mathcal{N}}
\newcommand{\cP}{\mathcal{P}}

\newcommand{\cF}{\mathcal{F}}
\newcommand{\cX}{\mathcal{X}}

\newcommand{\cD}{\mathcal{D}}
\newcommand{\cE}{\mathcal{E}}

\newcommand{\bbP}{\mathbb{P}}

\renewcommand{\exp}{{\rm{exp}}}
\newcommand{\TV}{{\sf TV}}
\newcommand{\KL}{{\rm{KL}}}

\renewcommand{\P}{{\rm{P}}}

\newcommand{\dimension}{{p}}

\newcommand{\argmin}{\mathop{\rm arg\min}}
\newcommand{\argmax}{\mathop{\rm arg\max}}

\newcommand{\var}{{\textrm {Var}}}

\newcommand{\indi}{{\mathds{1}}}

\newcommand{\wh}{\widehat}
\newcommand{\wt}{\widetilde}

\newtheorem{Theorem}{Theorem}[section]

\newtheorem{Lemma}[Theorem]{Lemma}

\newtheorem{Fact}[Theorem]{Fact}
\newtheorem{Corollary}[Theorem]{Corollary}

\newtheorem{Proposition}[Theorem]{Proposition}

\newtheorem{Model}{Model}

\theoremstyle{definition}
\newtheorem{Remark}[Theorem]{Remark}
\newtheorem{Definition}[Theorem]{Definition}

\DeclareMathAlphabet\mathbfcal{OMS}{cmsy}{b}{n}

\newcommand{\red}{\color{red}}

\newcommand{\modelref}[1]{%
  \csname modelref@#1\endcsname
}

\expandafter\newcommand\csname modelref@model:huber\endcsname{%
  \Cref{model:huber} (Huber)%
}
\expandafter\newcommand\csname modelref@model:adaptive\endcsname{%
  \Cref{model:adaptive} (Adaptive)%
}
\expandafter\newcommand\csname modelref@model:obv-1\endcsname{%
  \Cref{model:obv-1} (Oblivious I)%
}
\expandafter\newcommand\csname modelref@model:obv-2\endcsname{%
  \Cref{model:obv-2} (Oblivious II)%
}
\expandafter\newcommand\csname modelref@model:non-uni\endcsname{%
  \Cref{model:non-uni} (Non-Uniform)%
}
\expandafter\newcommand\csname modelref@model:hub-bdd-marg\endcsname{%
  \Cref{model:hub-bdd-marg} (Huber with Bounded Marginal Likelihood)%
}
\expandafter\newcommand\csname modelref@model:TV\endcsname{%
  \Cref{model:TV} (TV)%
}

\def\colorful{1}
\ifnum\colorful=1
\usepackage[normalem]{ulem}

\newcommand{\apnote}[1]{\footnote{{\bf [Ankit: {#1}\bf ]}}}
\newcommand{\dnote}[1]{\footnote{{\bf [Dong: {#1}\bf ]}}}
\newcommand{\light}[1]{ {\grey #1}} 
\newcommand{\todo}[1]{{\textbf{ [{\red{Todo}}: {#1}]}}}
\else

\newcommand{\apnote}[1]{}
\newcommand{\dnote}[1]{}
\newcommand{\todo}[1]{}
\newcommand{\light}[1]{} 
\fi

\hypersetup{
    colorlinks=true,
    linkcolor=blue,
    filecolor=blue,      
    urlcolor=cyan,
    citecolor=blue
}
\newcommand\blfootnotea[1]{%
  \begingroup
  \renewcommand\thefootnote{}\footnote{#1}%
  \endgroup
}
\makeatletter
\newcommand*{\rom}[1]{\expandafter\@slowromancap\romannumeral #1@}
\makeatother

\allowdisplaybreaks

\begin{document}
	\title{Robust Regression with Adaptive Contamination in Response: Optimal Rates and Computational Barriers\blfootnotea{Authors are listed in alphabetical order.}}

    \author{
Ilias Diakonikolas\thanks{Supported by NSF Medium Award CCF-2107079 and an H.I. Romnes Faculty Fellowship.}\\
University of Wisconsin-Madison \\
{\tt ilias@cs.wisc.edu} \\
\and 
Chao Gao\thanks{Supported by NSF Grants ECCS-2216912 and DMS-2310769 and an Alfred Sloan fellowship.}\\
University of Chicago \\
{\tt chaogao@uchicago.edu}
\and 
Daniel M.\ Kane\thanks{Supported by NSF Medium Award CCF-2107547.}\\
University of California, San Diego \\
{\tt dakane@ucsd.edu} \\
\and
Ankit Pensia\\
Carnegie Mellon University\\
{\tt ankitp@cmu.edu} \\
\and
Dong Xie\\
University of Chicago \\
{\tt dongxie@uchicago.edu}
}

	\date{}
	\maketitle

\begin{abstract}
We study robust regression under a contamination model in which covariates are clean while the responses 
may be corrupted in an adaptive manner. Unlike the classical Huber's contamination model, where both 
covariates and responses may be contaminated and consistent estimation is impossible when the 
contamination proportion is a non-vanishing constant, it turns out that the clean-covariate setting 
admits strictly improved statistical guarantees. Specifically, we show that the additional information in the clean 
covariates can be carefully exploited to construct an estimator that achieves a better estimation rate 
than that attainable under Huber contamination. In contrast to the Huber model, this improved rate 
implies consistency even when the contamination is a constant. A matching minimax lower bound is 
established using Fano's inequality together with the construction of contamination processes that match 
$m> 2$ distributions simultaneously, extending the previous two-point lower bound argument in Huber's 
setting. Despite the improvement over the Huber model from an information-theoretic perspective, 
we provide formal evidence---in the form of Statistical Query and Low-Degree Polynomial lower bounds---that the problem exhibits strong information-computation gaps. Our results strongly suggest that 
the information-theoretic improvements cannot be achieved by polynomial-time algorithms, 
revealing a fundamental gap between information-theoretic and 
computational limits in robust regression with clean covariates.
\end{abstract}


\thispagestyle{empty}

\newpage

\setcounter{page}{1}

\begin{sloppypar}
\section{Introduction} \label{sec: introduction}

Robust regression is a central problem in statistics. A canonical setting for robust regression considers data $\{(X_i,y_i)\}_{i=1}^n$ generated according to the following model.
\begin{Model}[Huber Contamination]\label{model:huber}
Let the contamination rate $\epsilon \in [0,1/2)$ and the dimension $\dimension \in \N$.
The pairs $\{(X_i,y_i)\}_{i=1}^n$ are independently drawn from
\begin{equation}
(X_i,y_i)\sim (1-\epsilon)P_{\beta,\sigma}+\epsilon Q, \label{eq:huber-x-y}
\end{equation}
where $Q$ is some arbitrary distribution, and $P_{\beta,\sigma}$ stands for the Gaussian linear model in $p$ dimensions whose sampling process is given by
\begin{equation}
(X_i,y_i)\sim P_{\beta,\sigma}\qquad\Longleftrightarrow\qquad X_i\sim \mathcal{N}(0,I_\dimension)\quad\text{and}\quad y_i\mid X_i\sim \mathcal{N}(X_i^\top\beta,\sigma^2). \label{eq:glm}
\end{equation}
\end{Model}
The data generating process (\ref{eq:huber-x-y}) is known as Huber's contamination model \citep{huber1964robust}. Roughly speaking, the data set $\{(X_i,y_i)\}_{i=1}^n$ contains an $\epsilon$ fraction of outliers under the setting (\ref{eq:huber-x-y}), and an outlier pair $(X_i,y_i)$ takes arbitrary values for both covariates and response.
The restriction of $\epsilon <1/2$ is information-theoretically necessary to get bounded error.
Optimal estimation of the regression vector $\beta$ under \Cref{model:huber} has been well studied in the literature.
When $P_{\beta,\sigma}$ is the Gaussian linear model (\ref{eq:glm}), a rate-optimal robust estimator can be constructed by maximizing the regression depth \citep{rousseeuw1999regression}. It was shown by \cite{Gao20} that maximizing the regression depth achieves the error rate
\begin{equation}
\|\wh{\beta}-\beta\|_2=O_{\mathbb{P}}\left(\sigma\left(\sqrt{\frac{\dimension}{n}}+\epsilon\right)\right), \label{eq:huber-xy-optimal}
\end{equation}
whenever $\epsilon < c < 1/2$, and this rate is information-theoretical optimal among all estimators.
However, maximizing the regression depth is computationally intractable. Polynomial-time algorithms that achieve the (nearly)-optimal rate have been proposed and analyzed by \cite{BalDLS17,DiaKS19,CheATJFB20,pensia2025robust,DiaKPP23-huber-optimal}. 

An important feature of \Cref{model:huber} is the impossibility of consistent estimation when the contamination proportion $\epsilon$ is a non-vanishing constant, due to the presence of the second term $\sigma\epsilon$ in the optimal error rate (\ref{eq:huber-xy-optimal}). One may wonder if a modification
of \Cref{model:huber} leads to consistent estimation even for a constant $\epsilon$. In this paper, we study a natural variation of \Cref{model:huber}, which is defined below.
\begin{Model}[Adaptive Contamination in Responses]\label{model:adaptive}
The pairs $\{(X_i,y_i)\}_{i=1}^n$ are independently drawn from the following process,
\begin{eqnarray}
\label{eq:reg-design} X_i&\sim& \mathcal{N}(0,I_\dimension), \\
\label{eq:reg-huber} y_i\mid X_i &\sim& (1-\epsilon)\mathcal{N}(X_i^\top\beta,\sigma^2) + \epsilon Q_{X_i},
\end{eqnarray}
where $Q_{X_i}$ is some arbitrary conditional distribution depending on $X_i$.
\end{Model}

 When $\epsilon=0$, the setting with (\ref{eq:reg-design}) and (\ref{eq:reg-huber}) is reduced to the Gaussian linear model (\ref{eq:glm}). Compared with (\ref{eq:huber-x-y}) where contamination applies to both $X_i$'s and $y_i$'s, now we only allow the response variables to be contaminated. According to (\ref{eq:reg-huber}), an outlier $y_i$ is drawn from an arbitrary distribution that may depend on the value of $X_i$. This is known as the \textit{adaptive} contamination in the literature, whereas a contamination distribution independent of the covariate is coined as an \textit{oblivious} one. The goal of our paper is to investigate the following questions: 1) In the setting of \Cref{model:adaptive}, is it possible to achieve a strictly better estimation rate than (\ref{eq:huber-xy-optimal})? 2) If the answer to the last question is yes, can we achieve the better estimation rate using a polynomial-time algorithm?

Robust regression with clean covariates has been considered in the literature \citep{she2011outlier,nguyen2012robust,foygel2014corrupted,dalalyan2019outlier,chinot2020erm} in forms that are closely related to \Cref{model:adaptive}. In particular, compared with \Cref{model:huber}, the setting of clean covariates allows straightforward polynomial-time algorithm such as M-estimators to work. Indeed, \cite{dalalyan2019outlier,chinot2020erm} show that a certain M-estimator achieves the rate (\ref{eq:huber-xy-optimal}). However, the results of \cite{dalalyan2019outlier,chinot2020erm} suggest that there is no information-theoretic gain with the additional assumption on the covariates (i.e., the rate (\ref{eq:huber-xy-optimal}) is optimal under \Cref{model:adaptive}). 

In this paper, we show that the clean-covariate assumption in \Cref{model:adaptive} can be leveraged to obtain a strictly better estimation rate than (\ref{eq:huber-xy-optimal}).
Our first main result is the construction of an estimator achieving the following error rate,
\begin{equation}
\|\wh{\beta}-\beta\|_2=O_{\mathbb{P}}\left(\sigma\left(\sqrt{\frac{\dimension}{n}}+\frac{\epsilon}{\sqrt{\log(n\epsilon^2/\dimension+e)}}\right)\right) \;, \label{eq:opt-rate}
\end{equation}
which is also shown to be minimax optimal under \Cref{model:adaptive}.
When $\epsilon \leq \sqrt{\frac{\dimension}{n}}$, both (\ref{eq:huber-xy-optimal}) and (\ref{eq:opt-rate}) are $\sigma\sqrt{\frac{\dimension}{n}}$. On the other hand when $\epsilon\geq \sqrt{\frac{\dimension}{n}}$, the rate (\ref{eq:opt-rate}) becomes $\frac{\sigma\epsilon}{\sqrt{\log(n\epsilon^2/\dimension+e)}}$, compared with the $\sigma\epsilon$ of (\ref{eq:huber-xy-optimal}). In particular, given a constant $\epsilon$, consistency is achieved by (\ref{eq:opt-rate}) whenever $n/p \rightarrow\infty$. The estimator achieving the rate (\ref{eq:opt-rate}) relies on the information at the tail of the design covariates. In other words, unlike the usual approach in robust statistics that trims away large data points, our estimator precisely takes advantage of the additional information associated with design points that have larger projections. This information is only available in the clean covariate setting. Intuitively, large values of $X_i$ mitigate the effect of contamination on the corresponding response $y_i$. This phenomenon holds beyond the setting of Gaussian design (\ref{eq:reg-design}). In a setting where $X_i$'s are generated from a more general class of distributions, we show that the optimal estimation rate critically depends on the tail of the design distribution. The heavier the tail, the faster the rate is.

Since the rate-optimal estimator requires a search over all univariate projections of the covariates, it is computationally infeasible. 
When it comes to polynomial-time algorithms, we establish a Statistical Query (SQ) lower bound \citep{kearns1998efficient,FelGRVX17} showing that any SQ algorithm 
achieving a faster rate than $\sigma\epsilon$ uses either exponentially many queries 
or a single query with an exponentially small tolerance. 
The exponentially small tolerance can be interpreted as exponentially many samples, 
which is still computationally infeasible to process. 
Our SQ lower bound thus rules out the possibility to improve the rate $O(\sigma\epsilon)$, 
for a broad class of polynomial-time algorithms. 
In addition, we also establish a similar computational barrier 
in the Low-Degree Polynomial (LDP) framework \citep{Hopkins-thesis,KunWB19,Wein25-survey} 
by using the connection between the SQ and the low-degree settings \cite{BreBHLS21}. 
To summarize, while there is an information-theoretic separation 
between the robust regression models with and without contamination on the covariates, 
such a separation between \Cref{model:huber} and \Cref{model:adaptive} 
does not hold from a computational perspective. That is, one 
cannot take advantage of the additional structure of the problem 
within a realistic computational budget.



\subsection{Related Work}

We start by summarizing the literature on \Cref{model:huber}. As already mentioned earlier, work on robust statistics~\cite{Gao20} obtained sample-efficient (but computationally inefficient) robust estimators in Huber's contamination model. 
The work of~\cite{BalDLS17} studied (sparse) linear regression
in Huber's contamination model: they observe that robust linear regression
can be reduced to robust mean estimation,
leading to an algorithm whose error guarantee scales with $\|\beta\|_2$. 
\cite{DiaKS19} gave the first sample and computationally efficient algorithm for \Cref{model:huber} with near minimax optimal error guarantees. 
A number of contemporaneous works~\cite{KliKM18,DiaKKLSS19,PraSBR20}
developed robust algorithms for linear regression
under weaker distributional assumptions. The aforementioned algorithmic works succeed even in a stronger contamination model, where the adversary is allowed to adaptively add outliers 
and remove inliers. Focusing on Huber's model in particular,~\cite{DiaKPP23-huber-optimal}
gave a sample near-optimal algorithm with optimal error that runs in near-linear time. 
For additional related work on \Cref{model:huber}, the reader is referred to~\cite{DiaKan22-book}.

In contrast to the vast literature on \Cref{model:huber}, there is no systematic study of \Cref{model:adaptive} to the best of our knowledge. It is worth mentioning \cite{chinot2020erm} that introduced \Cref{model:adaptive} as a hard instance in the lower bound construction of robust regression with clean covariates. In particular, the paper \cite{chinot2020erm} argued that the additional assumption of clean covariates does not make robust regression easier by claiming a minimax lower bound of order $\sigma\left(\sqrt{\frac{\dimension}{n}}+\epsilon\right)$ for \Cref{model:adaptive}. However, by constructing an estimator achieving a faster rate (\ref{eq:opt-rate}), our result indicates that lower bound proof in \cite{chinot2020erm} is incorrect.

On the other hand, various other settings of robust regression with clean covariates have been considered in the literature. The most popular one is the form of linear model with additive outliers \citep{she2011outlier,nguyen2012robust,foygel2014corrupted},
\begin{equation}
y_i= X_i^\top\beta + z_i+\gamma_i, \label{eq:reg-additive-cont}
\end{equation}
where $X_i\sim \mathcal{N}(0,I_\dimension)$, $z_i\sim \mathcal{N}(0,\sigma^2)$, and $\Gamma=(\gamma_1,\cdots,\gamma_n)^\top$ is an outlier vector assumed to be sparse. In fact, when the outlier vector $\Gamma$ is allowed to depend on the covariates, \Cref{model:adaptive} can be written as a special case of the setting (\ref{eq:reg-additive-cont}) with $\epsilon n$ corresponding to the sparsity of $\Gamma$. In \cite{dalalyan2019outlier}, it is proved that the classical M-estimator (e.g. Huber regression) achieves the rate $\sigma\left(\sqrt{\frac{\dimension}{n}}+\epsilon\right)$ up to a logarithmic dimension factor under (\ref{eq:reg-additive-cont}). We will show in \Cref{sec:models} that the rate $\sigma\left(\sqrt{\frac{\dimension}{n}}+\epsilon\right)$ is also a lower bound under (\ref{eq:reg-additive-cont}). In other words, the setting (\ref{eq:reg-additive-cont}) is strictly harder than \Cref{model:adaptive}, and consistency is impossible for a constant $\epsilon$ under (\ref{eq:reg-additive-cont}) just like Huber contamination (\Cref{model:huber}).

Another related model of robust regression with clean covariates deals with oblivious contamination and has been thoroughly studied in the literature \citep{Tsakonas14, JaiTK14,BhatiaJK15,BhatiaJKK17, 
SugBRJ19,pesme2020online, d2021consistent,Steurer21Outliers}. Its canonical setting is given by the linear model $y_i=X_i^\top\beta+z_i$, where
\begin{equation}
X_i\sim \mathcal{N}(0,I_\dimension),\qquad\text{ and }\qquad z_i\sim (1-\epsilon)\mathcal{N}(0,\sigma^2)+\epsilon Q, \label{eq:oblivious}
\end{equation}
and $z_i$ is independent of $X_i$. We note that once the $Q$ in (\ref{eq:oblivious}) is allowed to depend on $X_i$, this recovers (\ref{eq:reg-design})-(\ref{eq:reg-huber}) with adaptive contamination. The minimax rate of estimating the regression coefficients under $\ell_2$ norm is $\sigma\sqrt{\frac{\dimension}{n(1-\epsilon)^2}}$ whenever $\sqrt{\frac{\dimension}{n(1-\epsilon)^2}}$ is sufficiently small \citep{d2021consistent,Steurer21Outliers}, meaning that consistency is possible even when $\epsilon\rightarrow 1$. Compared with the more general formulation in (\ref{eq:reg-huber}), the oblivious contamination model in the form of (\ref{eq:oblivious}) has quite restricted implications in practice. We refer the reader to \Cref{sec:models} for a detailed discussion on the different contamination models.

In many settings of interest, all known statistically optimal estimators 
require exhaustive search to compute, 
while all known computationally efficient algorithms  
achieve weaker guarantees 
than the information-theoretic optimum. 
An information–computation (IC) tradeoff captures precisely this situation, 
where no computationally efficient algorithm for a statistical task
can achieve the information-theoretic limit. 
A successful approach in the literature to providing rigorous evidence of IC tradeoffs 
involves showing unconditional lower bounds within broad (yet restricted)
computational models---including, Low-Degree Polynomials (LDP) and Statistical Queries (SQ).  
Such models capture a wide range algorithmic techniques for statistical tasks, 
and corresponding lower bounds shed light on the structural reasons 
behind observed IC tradeoffs. In this vein,
we study the computational complexity of robust estimation under \Cref{model:adaptive}.
At a high-level, we give formal evidence that achieving the information-theoretically optimal 
error rate in high dimensions may not be possible by any computationally-efficient algorithm.
We briefly discuss the existence of such information-computation gaps in the context of robust linear regression.
As mentioned earlier, such gaps do not exist under \Cref{model:huber}\footnote{When the covariates have an unknown covariance $0.5 I\preceq\Sigma \preceq I$, such gaps do exist~\cite{DiaKS19}.}.
Our work adds to the growing literature that shows that information-computation gaps appear even when the covariates are clean, but the responses are corrupted~\cite{DDKW23a,DDKW23b,DiaGKLP25-oblivious}.
Our work differs from earlier work in two key aspects: 
(i) the contamination model in the earlier works is oblivious, as opposed to adaptive; 
and (ii) these works show a polynomial gap (at most quadratic) between the two rates, 
we establish a super-polynomial gap.

\subsection{Paper Organization}

The rest of the paper first starts with the analysis of the univarite setting in \Cref{sec:uni}. Our main results in high-dimensional setting will be given in  \Cref{sec:main}. The computational hardness of the problem will be established in \Cref{sec:SQ}. In \Cref{sec:disc}, we present some additional discussion on related contamination models and effect of the covariate distribution on the minimax rate. Finally, all technical proofs will be presented in \Cref{sec:pf}.

\subsection{Notation}

We use $\cS^{\dimension-1}$ to denote the set of all unit vectors in $\R^\dimension$.
For a positive natural number $n$, we use the notation $[n]$ and $[0:n]$ to denote the sets $\{1,\dots,n\}$ and $\{0,\dots,n\}$, respectively
All the logarithms would be base $e$.
For an $x \in \R$, we use $x_+$ to denote the positive part of $x$, i.e., $\max(x,0)$.
For a distribution $A$, we sometimes abuse notation by also using $A(\cdot)$ for its probability density function.
For an $x\in \R^\dimension$, $\mu \in \R^\dimension$, and a positive semidefinite matrix (PSD) $\Sigma \in \R^{\dimension \times \dimension}$, we use $\phi(x;\mu, \Sigma)$ to denote the pdf of $\cN(\mu,\Sigma)$ at $x$. 
When the dimension $\dimension$ is clear from the context, we simply write $\phi(x)$ for the pdf of the standard Gaussian $\cN(0,I_\dimension)$.
For a unit vector $v \in \R^\dimension$ and $x \in \R^\dimension$,
we use $\phi_{v^\perp}(x)$ to denote $\exp\left(-\|x - (v^\top X)v\|_2^2/2\right)/(2\pi)^{(\dimension-1)/2}$.
For two distributions $A$ and $B$, we use $A \otimes B$ to denote the product distribution.
For two distributions $P$ and $Q$, the quantities $\TV(P,Q)$ and $\KL(P \| Q)$ denote the TV distance between $P$ and $Q$ and the KL divergence of $P$ from $Q$, respectively.

As the focus of our work is on the (minimax and computational) error rates, we use the standard asymptotic notation $O, \Omega, \Theta$ and $\Theta$ to hide absolute constants that do not depend on other parameters. 
We also use the notation $\lesssim$ to hide absolute constants in the relationship.
For an $x > 0$, we use $\mathrm{poly}(x)$ to denote an expression that is at most a polynomial function of $x$.
For a sequence of random variables $(X_n)_{n \geq 1}$ and real numbers $(a_n)_{n \geq 1}$, we use the notation $X_n= O_{\mathbb{P}}(a_n)$ to mean that, for every $\epsilon>0$, there exist a positive number $M$ and a natural number $n_0 \in \N$ such that for all $n \geq n_0$, $\P(|X_n| \geq M a_n) \leq \epsilon$. 
Finally, when we mention the \emph{error rate} of a problem or an estimator, we hide multiplicative constants.

\section{Prologue: The Univariate Setting}\label{sec:uni}

We will first discuss the univariate setting with $\dimension=1$ and convince the readers that consistent estimation is possible under \Cref{model:adaptive} even when the contamination proportion $\epsilon$ does not vanish. Let us start with a simple median regression estimator,
\begin{equation}
\wh{\beta} = \argmin_{\widetilde{\beta}  \in \R}\frac{1}{n}\sum_{i=1}^n\left|y_i-\widetilde{\beta} X_i\right|. \label{eq:median-reg1d}
\end{equation}
It is not hard to show that the error rate of (\ref{eq:median-reg1d}) is given by
\begin{equation}
|\wh{\beta}-\beta| = O_{\mathbb{P}}\left(\sigma\left(\frac{1}{\sqrt{n}}+\epsilon\right)\right). \label{eq:mreg-rate1d}
\end{equation}
See \Cref{thm:md-reg-hd} for a formal result of (\ref{eq:mreg-rate1d}) with general dimension.
Since the second term of the error rate is $\sigma\epsilon$, median regression is inconsistent unless $\epsilon$ tends to $0$. A key observation here is that the rate (\ref{eq:mreg-rate1d}) also holds under the following data generating process
\begin{eqnarray}
\label{eq:reg-design=} X_i&\sim& \mathcal{N}(0,\sigma^{-2}), \\
\label{eq:reg-huber=} y_i\mid X_i &\sim& (1-\epsilon)\mathcal{N}(\beta X_i,1) + \epsilon Q_{X_i},
\end{eqnarray}
since the setting with (\ref{eq:reg-design=})-(\ref{eq:reg-huber=}) is equivalent to (\ref{eq:reg-design})-(\ref{eq:reg-huber}) when $\dimension=1$ by scaling the data with $\sigma$. More specifically, with $\{(y_i,X_i)\}_{i=1}^n$ sampled from (\ref{eq:reg-design})-(\ref{eq:reg-huber}), we can regard $\{(y_i/\sigma,X_i/\sigma)\}_{i=1}^n$ as sampled from (\ref{eq:reg-design=})-(\ref{eq:reg-huber=}) with some different $Q_{X_i}$. In other words, one expects that the error of median regression is inversely proportional to the magnitude of the covariate. This suggests a new strategy of conditioning on covariates with large magnitude. Thanks to the lack of contamination on covariates, one can always select $X_i$'s that are large first and then apply the median regression.

Inspired by the above discussion, we consider the following truncated median regression estimator,
\begin{equation}
\wh{\beta} = \argmin_{\widetilde{\beta}  \in \R}\frac{1}{n}\sum_{i=1}^n\left|y_i-\widetilde{\beta} X_i\right|\indi\{|X_i|\geq t\}. \label{eq:t-med-reg-1d}
\end{equation}
The truncated median regression is equivalent to applying (\ref{eq:median-reg1d}) with the subset of data indexed by $\{i\in[n]: |X_i|\geq t\}$. In view of (\ref{eq:mreg-rate1d}), one would guess that the error rate of (\ref{eq:t-med-reg-1d}) should be
\begin{equation}
O_{\mathbb{P}}\left(\frac{\sigma}{t}\left(\frac{1}{\sqrt{n(t)}}+\epsilon\right)\right), \label{eq:guess-rate}
\end{equation}
where $n(t)$ stands for the cardinality of $\{i\in[n]: |X_i|\geq t\}$, which is interpreted as the effective sample size. As $t$ increases, the larger magnitude of the covariates implies a smaller $\frac{\sigma\epsilon}{t}$ in the second term of (\ref{eq:guess-rate}). However, the first term $\frac{\sigma}{t\sqrt{n(t)}}$ may get larger because of the smaller effective sample size $n(t)$. To achieve consistency with a constant $\epsilon$, one can choose $t$ such that both $t\rightarrow \infty$ and $t\sqrt{n(t)}\rightarrow\infty$ hold as $n\rightarrow\infty$. For example, with the Gaussian design, by setting $t=\sqrt{\log n}$, we have $n(t)\gtrsim \sqrt{n}$, and the rate (\ref{eq:guess-rate}) is at most $O_{\mathbb{P}}\left(\frac{\sigma}{\sqrt{\log n}}\right)$ even when $\epsilon=0.1$. For a general $\epsilon$ which may be a vanishing function of $n$, the truncation level $t$ should be chosen to minimize the bound (\ref{eq:guess-rate}). This intuition is established as a non-asymptotic error bound in the following theorem.

\begin{Theorem}\label{thm:1d}
Consider data generated from \Cref{model:adaptive} with $\dimension=1$ and the estimator (\ref{eq:t-med-reg-1d}) for some $t\in[0,\sqrt{0.9\log n}]$.
For any $\alpha\in(0,1)$, there exist $C,c>0$ such that whenever $\frac{1}{\sqrt{n}}+\epsilon \leq c$, the estimator (\ref{eq:t-med-reg-1d}) satisfies
$$|\wh{\beta}-\beta|\leq C\frac{\sigma}{t} \left(\frac{1}{\sqrt{n}e^{-t^2/2}}+\epsilon\right),$$
with probability at least $1-\alpha$. Thus, by taking $t=\sqrt{\frac{1}{2}\log(n\epsilon^2+e)}$, we achieve the error rate $\sigma\left(\frac{1}{\sqrt{n}}+\frac{\epsilon}{\sqrt{\log(n\epsilon^2+e)}}\right)$.
\end{Theorem}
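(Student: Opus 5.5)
By convexity of the objective in (\ref{eq:t-med-reg-1d}), it suffices to check the sign of its one-sided derivatives at the two points $\beta\pm r$, where $r$ is the claimed bound. Rescaling $(X_i,y_i)$ by $\sigma$ as in the discussion above, we may take $\sigma=1$. Write $y_i=\beta X_i+z_i$ for inliers, with $z_i\sim\cN(0,1)$ independent of $X_i$. Under \Cref{model:adaptive}, the inlier/outlier label $\xi_i\sim\mathrm{Bernoulli}(\epsilon)$ can be realized independently of $X_i$, and given $\xi_i=1$ the response is arbitrary from $Q_{X_i}$. The right derivative of $F(b)=\frac1n\sum_i|y_i-bX_i|\indi\{|X_i|\ge t\}$ at $b=\beta+r$ is at least $-\frac1n G(r)$, where
$$G(r)=\sum_{i}\indi\{|X_i|\ge t\}\Big[(1-\xi_i)X_i\,\mathrm{sign}(z_i-rX_i)+\xi_i|X_i|\Big].$$
This bounds each outlier's contribution crudely by $|X_i|$. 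If $G(r)<0$, then $F$ is strictly increasing to the right of $\beta+r$, so $\wh\beta\le\beta+r$. The point $\beta-r$ is handled symmetrically. Thus we only need to show that $G(r)<0$ with probability $1-\alpha/2$ at a single deterministic $r$, and no uniform empirical-process argument is required. (For $t=0$ the stated bound is infinite, so we may assume $t>0$.)

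\textbf{Step 1 (drift).} Let $q=\bbP(|X|\ge t)$ for $X\sim\cN(0,1)$. By symmetry of $z$,
$$\E\big[X\,\mathrm{sign}(z-rX)\mid X\big]=-|X|\big(2\Phi(r|X|)-1\big).$$
Since $2\Phi(u)-1\ge c_0 u$ for $u\le 1$, the expected inlier part is at most
$$-(1-\epsilon)\,n\,c_0\, r\,\E\big[X^2\indi\{t\le|X|\le 1/r\}\big].$$
Using Gaussian tail estimates, $\E[X^2\indi\{t\le|X|\le 2t+2\}]\gtrsim (t^2+1)q$. We also need $1/r\ge 2t+2$. This holds because $rt=C\big(e^{t^2/2}/\sqrt n+\epsilon\big)$, and $e^{t^2/2}/\sqrt n\le n^{-0.05}$ when $t^2\le 0.9\log n$; both terms are small once $\frac1{\sqrt n}+\epsilon\le c$. (I would also track $r$ itself, which is small for the same reason when $t\gtrsim1$. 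For $t<1$ the claim follows from the untruncated analysis.) Hence the inlier drift is $\lesssim -n r (t^2+1) q$. The outlier drift is
$$\epsilon\, n\,\E\big[|X|\indi\{|X|\ge t\}\big]=2\epsilon n\phi(t)\lesssim \epsilon n (t+1)q,$$
by the Mills ratio.

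\textbf{Step 2 (fluctuation).} The summands of $G$ are i.i.d. with second moment at most $\E[X^2\indi\{|X|\ge t\}]\lesssim (t^2+1)q$. By Chebyshev, with probability $1-\alpha/2$,
$$|G-\E G|\le C_\alpha\sqrt{n(t^2+1)q}.$$
Therefore $G(r)<0$ as soon as
$$r(t^2+1)q\ \ge\ C'\Big(\epsilon(t+1)q+\sqrt{(t^2+1)q/n}\Big),$$
that is, $r\gtrsim \epsilon/t+1/(t\sqrt{nq})$. Finally, $q\gtrsim e^{-t^2/2}/(t+1)$ and $t\le\sqrt{0.9\log n}$, which I expect to turn $1/(t\sqrt{nq})$ into the stated $\frac{1}{t\sqrt n e^{-t^2/2}}$, possibly after absorbing a lower-order factor $\sqrt{t+1}$ into $C$ (if it does not absorb, I would state the bound with that factor). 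The choice of $t$ in the last sentence then balances the two terms. When $\epsilon\ge 1/\sqrt n$, we have $t^2\asymp\log(n\epsilon^2+e)$, which gives $e^{t^2/2}/\sqrt n\asymp\epsilon$, and this choice respects $t^2\le0.9\log n$ because $\epsilon\le c$. When $\epsilon<1/\sqrt n$, the term $t=O(1)$ gives the $1/\sqrt n$ rate.

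\textbf{Main obstacle.} The delicate point is Step 1's lower bound on the drift. It needs the window $[t,1/r]$ to carry a constant fraction of the Gaussian tail mass beyond $t$, which is exactly where the restrictions $t^2\le0.9\log n$ and $\frac1{\sqrt n}+\epsilon\le c$ enter. The bookkeeping between $q$ and $e^{-t^2/2}$, including the polynomial-in-$t$ factors, also has to be handled uniformly in $t$, both near $0$ and near $\sqrt{0.9\log n}$.
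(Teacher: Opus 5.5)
Your proof is correct and follows the same route as the paper's. You check the sign of the one-sided derivative of the truncated $\ell_1$ objective at $\beta\pm r$, bound each outlier's contribution by $|X_i|\indi\{|X_i|\ge t\}$, lower-bound the inlier drift via $2\Phi(u)-1\ge 2\phi(1)u$ on a window above $t$, and control fluctuations by Chebyshev. Two small slips: the reduction to $\sigma=1$ should rescale $y_i$ only, and with the final choice of $t$ one has $e^{t^2/2}/\sqrt n\lesssim\epsilon$ rather than $\asymp\epsilon$.

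Your two refinements are genuine improvements over the paper's argument:
\begin{itemize}
\item You use the variance bound $\E[X^2\indi\{|X|\ge t\}]\lesssim(t^2+1)q$ instead of the paper's crude $O(1/\sqrt n)$ fluctuation. This gives the term $1/\sqrt{n(t^2+1)q}\lesssim\sqrt{t+1}\,e^{t^2/4}/(t\sqrt n)$, which is dominated by the stated $e^{t^2/2}/(t\sqrt n)$ because $\sqrt{t+1}\lesssim e^{t^2/4}$. So your hedge about the $\sqrt{t+1}$ factor is unnecessary, and you in fact obtain the sharper effective-sample-size rate.
\item You use the window $[t,2t+2]$ and treat $t<1$ separately. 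For $t<1$, run the same argument with the fixed window $[1,2]\subseteq[t,1/r]$, since the estimator is still truncated; calling it the ``untruncated analysis'' is slightly imprecise. This sidesteps the paper's assertion $\E[X^2\indi\{t\le|X|\le 2t\}]\gtrsim t$ for $t<1$, which fails as $t\to0$: that quantity is of order $t^3$.
\end{itemize}
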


In view of the lower bound (\Cref{thm:info-lower} in \Cref{sec:lower}), the estimator (\ref{eq:t-med-reg-1d}) achieves the minimax rate of the problem with an appropriate truncation level.

Motivated by the optimality of the truncated M-estimator in the univariate setting, it is tempting to write down the following extension in high dimension,
\begin{equation}
\wh{\beta} = \argmin_{\widetilde{\beta}\in\mathbb{R}^p}\frac{1}{n}\sum_{i=1}^n\left|y_i-X_i^\top\widetilde{\beta}\right|\indi\{\|X_i\|_2\geq t\}. \label{eq:t-med-reg-hd}
\end{equation}
Unfortunately, the same idea no longer works when $\dimension$ is large. To see why (\ref{eq:t-med-reg-hd}) fails, consider the effective sample size
$$n(t)=\sum_{i=1}^n\indi\{\|X_i\|_2\geq t\}.$$
Since $\|X_i\|_2^2\sim\chi_{\dimension}^2$, the value of $\|X_i\|_2^2$ concentrates around the mean $\dimension$ with deviation of order $O_{\mathbb{P}}(\sqrt{\dimension})$. Therefore, unless $|t-\sqrt{\dimension}|=O(1)$, $n(t)$ is either very close to $0$ or very close to $n$. The only truncation level $t=(1\pm o(1))\sqrt{\dimension}$ that results in a nontrivial subset cannot lead to an efficient tradeoff between covariate magnitude and effective sample size as in (\ref{eq:guess-rate}) of the univariate setting.

\section{Minimax Rates in High Dimension}\label{sec:main}

\subsection{Upper Bound Using Regression Depth}

The failure of truncation according to $\ell_2$ norm does not rule out truncating the data using low-dimensional projections as in the univariate setting. That is, instead of using estimators constructed from $\{(X_i,y_i)\}_{i\in[n]:\|X_i\|_2\geq t}$, we hope to build an estimator using $\{(v^\top X_i,y_i)\}_{i\in[n]: |v^\top X_i|\geq t}$ for all $v\in \mathcal{S}^{\dimension-1}$. While this may not be straightforward by modifying some M-estimator, it turns out the idea goes well with robust estimators based on the regression depth function.

Given a joint probability distribution $\mathbb{P}$ of $(X,y)$ and a regression vector $\beta\in\mathbb{R}^{\dimension}$, the regression depth function \citep{rousseeuw1999regression} is defined by
\begin{equation}
\mathcal{D}(\beta,\mathbb{P}) = \inf_{v\in \mathcal{S}^{\dimension-1}}\mathbb{P}\left(v^\top X(y-X^\top\beta)\geq 0\right). \label{eq:reg-depth}
\end{equation}
The definition (\ref{eq:reg-depth}) is analogous to the well known halfspace depth \citep{tukey1975mathematics} for location parameters. The maximizer of the empirical version of (\ref{eq:reg-depth}) is a robust estimator for the regression vector, and it is known to achieve the error rate $\sqrt{\frac{p}{n}}+\epsilon$ under \Cref{model:huber} \citep{Gao20}. Since the definition (\ref{eq:reg-depth}) is based on all one-dimensional projections of the covariate $X$, a natural modification that uses information of large $v^\top X$ is given by
\begin{equation}
\mathcal{D}(\beta,\mathbb{P},t) = \inf_{v\in \mathcal{S}^{\dimension-1}}\mathbb{P}\left(v^\top X(y-X^\top\beta)\geq 0, |v^\top X|\geq t\right). \label{eq:trunc-depth}
\end{equation}
The empirical version of (\ref{eq:trunc-depth}) is
$$\mathcal{D}(\beta,\mathbb{P}_n,t)=\inf_{v\in \mathcal{S}^{\dimension-1}}\frac{1}{n}\sum_{i=1}^n\indi\{v^\top X_i(y_i-X_i^\top\beta)\geq 0, |v^\top X_i|\geq t\}.$$
For a given direction $v\in \mathcal{S}^{\dimension-1}$, only the subset $\{i\in[n]: |v^\top X_i|>t\}$ is used in the computation of the depth on that direction. On the other hand, $\mathcal{D}(\beta,\mathbb{P}_n,t)$ depends on $\bigcup_{v\in \mathcal{S}^{\dimension-1}}\{i\in[n]: |v^\top X_i|>t\}$, which can be the entire data set when $\dimension$ is large, and thus every data point is informative in the high dimensional setting.
A robust estimator is defined by maximizing the truncated depth function,
\begin{equation}
\wh{\beta} = \argmax_{\widetilde{\beta}  \in \R^\dimension}\mathcal{D}(\widetilde{\beta},\mathbb{P}_n,t). \label{eq:depth-est}
\end{equation}

\begin{Theorem}\label{thm:info-upper}
Consider data generated from \Cref{model:adaptive} and the estimator (\ref{eq:depth-est}) with $t\in[0,\sqrt{0.4\log(n/\dimension)}]$.
For any $\alpha\in(0,1)$, there exist $C,c>0$ such that whenever $\sqrt{\frac{\dimension}{n}}+\epsilon \leq c$, the estimator (\ref{eq:depth-est}) satisfies
\begin{equation}
\|\wh{\beta}-\beta\|_2\leq C\frac{\sigma}{t} \left(\sqrt{\frac{\dimension}{n}}e^{t^2}+\epsilon\right), \label{eq:up-rate}
\end{equation}
with probability at least $1-\alpha$. Thus, by taking $t=\frac{1}{2}\sqrt{\log(n\epsilon^2/\dimension+e)}$, we achieve the error rate (\ref{eq:opt-rate}).
\end{Theorem}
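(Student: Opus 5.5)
By scale equivariance of (\ref{eq:depth-est}) (replace $y_i$ by $y_i/\sigma$ and $\beta$ by $\beta/\sigma$), I will assume $\sigma=1$. Write $\delta=\widetilde\beta-\beta$ and $\eta_i=y_i-X_i^\top\beta$. Then, apart from the $\epsilon$-fraction of contaminated responses, $\eta_i\sim\cN(0,1)$ independently of $X_i$. The plan is the usual depth argument in the spirit of \citep{Gao20}, run on the truncated depth:
\begin{itemize}
\item[(i)] a uniform concentration bound for $\mathcal{D}(\cdot,\mathbb{P}_n,t)$ around its population version;
\item[(ii)] a lower bound on the depth of the truth $\beta$;
\item[(iii)] an upper bound on the depth of any $\widetilde\beta$ far from $\beta$.
\end{itemize}
Combining them shows that the maximizer must be close to $\beta$.

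\emph{Step (i): concentration.} Consider the sets $\{(x,y): v^\top x(y-x^\top\widetilde\beta)\ge 0,\ |v^\top x|\ge t\}$, indexed by $(v,\widetilde\beta)$. Each is a Boolean combination of a bounded number of sets whose indicators are signs of polynomials of degree at most $2$ in $(x,y)$, so the class has VC dimension $O(\dimension)$.

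A plain VC bound would give deviation $\sqrt{\dimension/n}$, which is too coarse. Instead I will use a relative (Talagrand/Bernstein-type) VC inequality. Each set has probability at most $q_t:=\P(|Z|\ge t)\asymp e^{-t^2/2}/(1+t)$. The relative inequality then gives, with probability $1-\alpha$,
\[
\sup_{\widetilde\beta}\bigl|\mathcal{D}(\widetilde\beta,\mathbb{P}_n,t)-\mathcal{D}(\widetilde\beta,\mathbb{P},t)\bigr|\lesssim \sqrt{\tfrac{\dimension q_t}{n}}+\tfrac{\dimension}{n}.
\]
It suffices to prove the relative error bound $\sqrt{\dimension/(nq_t)}\le \sqrt{\dimension/n}\,e^{t^2}$. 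This holds because $t\le\sqrt{0.4\log(n/\dimension)}$ gives $nq_t\gtrsim \dimension$, up to the log factor absorbed by $e^{t^2}$ versus $e^{t^2/4}$. The $\dimension/n$ term is then dominated as well.

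\emph{Step (ii): depth of the truth.} Condition on $X$ with $|v^\top X|\ge t$. A clean response has $\eta$ symmetric, so it contributes $\tfrac12$. Hence
\[
\mathcal{D}(\beta,\mathbb{P},t)\ge (1-\epsilon)\tfrac{q_t}{2}.
\]
This is the point where clean covariates are used: the adversary cannot change the mass $q_t$ of the set $\{|v^\top X|\ge t\}$.

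\emph{Step (iii): depth far from the truth.} For $\widetilde\beta$ with $\delta\neq0$, choose $v=-\delta/\|\delta\|_2$ and write $w=v^\top X$. Then
\[
\P\bigl(w(\eta+\|\delta\|_2 w)\ge 0,\ |w|\ge t\bigr)\le \epsilon q_t+(1-\epsilon)\,\E\bigl[\Phi(-\|\delta\|_2|w|)\indi\{|w|\ge t\}\bigr].
\]
The key estimate is
\[
\E\bigl[\Phi(-r|w|)\mid |w|\ge t\bigr]\le \tfrac12-c\min(rt,1)\quad\text{for } t\ge1,
\]
which holds because $|w|\ge t$ on this event. The depth of $\widetilde\beta$ is therefore at most $q_t\bigl(\tfrac12-c\min(\|\delta\|_2 t,1)\bigr)+\epsilon q_t$.

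\emph{Combining.} The maximizer satisfies
\[
\mathcal{D}(\wh\beta,\mathbb{P}_n,t)\ge \mathcal{D}(\beta,\mathbb{P}_n,t).
\]
Dividing the three bounds by $q_t$ gives
\[
c\min(\|\delta\|_2 t,1)\lesssim \epsilon+\sqrt{\tfrac{\dimension}{n}}\,e^{t^2}.
\]
Under the smallness condition $\sqrt{\dimension/n}+\epsilon\le c$, the right-hand side is below the constant, which rules out the $\min=1$ branch. This yields (\ref{eq:up-rate}).

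For $t<1$, I replace $t$ by $\max(t,1)$ inside the argument; (\ref{eq:up-rate}) is trivially weaker there, since $1/t\ge1$ recovers the standard rate. Plugging in $t=\frac12\sqrt{\log(n\epsilon^2/\dimension+e)}$ gives $e^{t^2}=(n\epsilon^2/\dimension+e)^{1/4}$. I will check that the term $\sqrt{\dimension/n}\,e^{t^2}$ is $\lesssim \epsilon+\sqrt{\dimension/n}$, and that this choice of $t$ lies in the allowed range given $\epsilon\le c$. This gives (\ref{eq:opt-rate}).

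\emph{Main obstacle.} I expect Step (i) to be the hardest: the deviation must be measured relative to the small mass $q_t$, uniformly over $(v,\widetilde\beta)$, at the scale $\sqrt{\dimension q_t/n}$. I would first try the ratio-type VC inequality (e.g.\ Vapnik's relative deviation bound) together with the VC-dimension estimate above. The generous exponent $e^{t^2}$, compared with the natural $e^{t^2/2}$, leaves slack to absorb logarithmic factors in that bound.
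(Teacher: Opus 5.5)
Your outline matches the paper's proof. Clean covariates mean contamination shifts each direction's truncated depth by at most $\epsilon q_t$, the empirical depth concentrates uniformly, and curvature comes from testing the direction of $\widetilde\beta-\beta$. Your curvature estimate, using $\Phi(r|w|)\ge\Phi(rt)$ on $\{|w|\ge t\}$, is a bit cleaner than the paper's derivative bound on $\{t\le|Z|<2t\}$. It also holds for every $t\ge0$, so the $\max(t,1)$ device is unnecessary. Four things need fixing, though.

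\textbf{Sign of the test direction.} In Step (iii) you chose $v=-\delta/\|\delta\|_2$. The event is then $\{w\eta+\|\delta\|_2w^2\ge0\}$, and for a clean response its conditional probability is $\Phi(\|\delta\|_2|w|)\ge\frac12$, so your displayed bound is false. Take $v=\delta/\|\delta\|_2$, as the paper does. Then the event is $w(\eta-\|\delta\|_2w)\ge0$, with conditional probability $\Phi(-\|\delta\|_2|w|)$.

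\textbf{Step (i) is not the obstacle you think.} The plain VC bound is not too coarse. The paper uses $\sup_{\widetilde\beta}|\mathcal{D}(\widetilde\beta,\mathbb{P}_n,t)-\mathcal{D}(\widetilde\beta,\mathbb{P},t)|\le C\sqrt{\dimension/n}$ (Section 6.1 of \cite{Gao20}). After dividing by $q_t$ this costs
$\sqrt{\dimension/n}/q_t\lesssim\sqrt{\dimension/n}\,(1+t)e^{t^2/2}\lesssim\sqrt{\dimension/n}\,e^{t^2}$.
That is exactly where the $e^{t^2}$ in (\ref{eq:up-rate}) comes from, so no relative deviation inequality is needed.

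\textbf{Which relative bound would work.} A localized bound improves the factor to roughly $e^{t^2/4}$ only in its Talagrand-type form, whose log factors are $\log(1/q_t)\asymp 1+t^2$. Vapnik's ratio inequality, which you name as your first attempt, carries a $\log(n/\dimension)$ factor. The factor $e^{t^2}$ cannot absorb it when $t=O(1)$. That is precisely the regime of the final rate when $n\epsilon^2/\dimension=O(1)$, where you would lose a $\sqrt{\log(n/\dimension)}$ factor.

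\textbf{Where the range of $t$ enters.} In the combining step, smallness of $\epsilon+\sqrt{\dimension/n}\,e^{t^2}$ does not follow from $\sqrt{\dimension/n}+\epsilon\le c$ alone. It is the restriction $t^2\le0.4\log(n/\dimension)$, giving $\sqrt{\dimension/n}\,e^{t^2}\le(\dimension/n)^{0.1}$, that rules out the saturated branch. This is how the paper uses that hypothesis, not to secure $nq_t\gtrsim\dimension$.
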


\begin{Remark}
The optimal truncation level $t=\frac{1}{2}\sqrt{\log(n\epsilon^2/\dimension+e)}$ is an increasing function of $\epsilon$. Intuitively, larger covariates are more resilient to the contamination on responses. When $\epsilon$ is unknown, an adaptive estimator can be constructed to achieve the same optimal error rate via a standard Lepski's method \citep{lepskii1991problem,lepskii1992asymptotically,jain2022robust} that selects $t$ from data.
\end{Remark}

\subsection{Lower Bound}\label{sec:lower}

Between the two terms in the optimal rate (\ref{eq:opt-rate}), the first term $\sqrt{\frac{\sigma^2\dimension}{n}}$ can be obtained by a standard lower bound argument \citep{polyanskiy2025information} in a regression model without contamination. On the other hand, deriving the lower bound $\frac{\sigma\epsilon}{\sqrt{\log(n\epsilon^2/\dimension+e)}}$ requires some new technical tool. In the setting of \Cref{model:huber}, the optimal error rate $\sigma\epsilon$ does not depend on the dimension, and the lower bound construction is a simple two-point argument \citep{chen2015robust} based on the fact that for any distributions $P_1$ and $P_2$ satisfying $\TV(P_1,P_2)\leq \frac{\epsilon}{1-\epsilon}$, there exist $Q_1$ and $Q_2$, such that
\begin{equation}
(1-\epsilon)P_1+\epsilon Q_1=(1-\epsilon)P_2+\epsilon Q_2. \label{eq:match-2}
\end{equation}
However, the two-point construction does not lead to the sharp rate $\frac{\sigma\epsilon}{\sqrt{\log(n\epsilon^2/\dimension+e)}}$ in the setting of \Cref{model:adaptive}.

We will derive the lower bound $\frac{\sigma\epsilon}{\sqrt{\log(n\epsilon^2/\dimension+e)}}$ using Fano's inequality \citep{yu1997assouad}. Let $v_1,\cdots,v_m$ be a $\frac{1}{2}$-packing of the unit sphere $\mathcal{S}^{\dimension-1}$. That is, $\|v_j-v_k\|_2>\frac{1}{2}$ for all $j\neq k$. It is known that such a packing exists with $m\geq 2^\dimension$. For each $j\in[m]$, with some $\delta>0$ and some conditional distribution $Q_{j,X}$ to be specified later, we define $\mathbb{P}_j$ to be a joint distribution of $(X,y)$ whose sampling process is given by
\begin{eqnarray}
\label{eq:reg-design-l} X &\sim& \mathcal{N}(0,I_\dimension), \\
\label{eq:reg-huber-l} y\mid X &\sim& (1-\epsilon)\mathcal{N}(\delta X^\top v_j,\sigma^2) + \epsilon Q_{j,X}.
\end{eqnarray}
Then, Fano's inequality gives
\begin{equation}
\inf_{\wh{\beta}}\sup_{\beta,Q}\mathbb{P}\left(\|\wh{\beta}-\beta\|_2\geq\frac{\delta}{4}\right)\geq 1-\frac{n\max_{j,k\in[m]}\KL(\mathbb{P}_j\|\mathbb{P}_k)+\log 2}{\log m}\,, \label{eq:fano}
\end{equation}
where $\KL(P \| Q)$ denotes the KL divergence of $P$ from $Q$.
It suffices to bound $\max_{j,k\in[m]}\KL(\mathbb{P}_j\|\mathbb{P}_k)$ with appropriate choices of the conditional distributions $Q_{1,X},\cdots,Q_{m,X}$. Intuitively, we need to construct these conditional distributions so that $\{(1-\epsilon)\mathcal{N}(\delta X^\top v_j,\sigma^2) + \epsilon Q_{j,X}\}_{j=1}^m$ are close to each other for a typical value of $X$ following (\ref{eq:reg-design-l}). Unlike matching two distributions in (\ref{eq:match-2}), now we need to match $m$ distributions simultaneously.

To this end, let us first consider a simpler problem of matching $m$ distributions without conditioning. Given arbitrary probability distributions $P_1,\cdots,P_m$, our goal is to find $\{Q_j\}_{j=1}^m$ such that $(1-\epsilon)P_j+\epsilon Q_j$ does not depend on $j\in[m]$. Similar to the $m=2$ case, whether matching more than two distributions is possible depends on a quantity that generalize the total variation distance. We define the total variation of $P_1,\cdots,P_m$ as
\begin{equation}
\TV(P_1,\cdots,P_m) = \int \max_{1\leq j\leq m}\frac{dP_j}{d\mu} d\mu -1, \label{eq:tv-m}
\end{equation}
where $\mu$ is a common dominating measure. The definition (\ref{eq:tv-m}) is a special case of the general $f$-divergence of $m$ distributions studied by \cite{duchi2018multiclass}. The following lemma is an extension of the two-point method (\ref{eq:match-2}).

\begin{Lemma}\label{lem:matching}
Suppose the distributions $P_1,\cdots,P_m$ satisfy $\TV(P_1,\cdots,P_m)\leq \frac{\epsilon}{1-\epsilon}$ for some $\epsilon\in[0,1)$. Then, there exist distributions $Q_1,\cdots,Q_m$ such that $(1-\epsilon)P_j+\epsilon Q_j=(1-\epsilon)P_k+\epsilon Q_k$ for all $j,k\in[m]$.
\end{Lemma}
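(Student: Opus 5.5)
The plan is to build the common mixture explicitly from the pointwise maximum of the densities. Fix a dominating measure $\mu$ (for instance $\mu=\frac{1}{m}\sum_{j}P_j$) and write $p_j=dP_j/d\mu$. Set $M(x)=\max_{1\leq j\leq m}p_j(x)$, so that $\int M\,d\mu = 1+\TV(P_1,\cdots,P_m)=:1+\tau$ with $\tau\leq \frac{\epsilon}{1-\epsilon}$. Every mixture $(1-\epsilon)P_j+\epsilon Q_j$ will be made equal to a single target distribution $R$. The natural target puts mass $(1-\epsilon)M$ wherever possible. Since $(1-\epsilon)\int M\,d\mu=(1-\epsilon)(1+\tau)\leq 1$, there is leftover mass $s:=1-(1-\epsilon)(1+\tau)\geq 0$, and we need somewhere to put it.

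Concretely, I will define
\[
R = (1-\epsilon)M\,\mu + s\,P_1 ,
\]
which is a probability measure because it is nonnegative and has total mass $(1-\epsilon)(1+\tau)+s=1$. Any fixed probability measure could replace $P_1$. Then, for each $j$, set
\[
\epsilon Q_j = R-(1-\epsilon)P_j = (1-\epsilon)(M-p_j)\,\mu + s\,P_1 .
\]

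Two things must be checked. First, nonnegativity holds since $M\geq p_j$ pointwise and $s\geq 0$. Second, the total mass of the right-hand side is $(1-\epsilon)\tau+s = 1-(1-\epsilon)=\epsilon$. Thus for $\epsilon>0$, $Q_j$ is a genuine probability measure, and $(1-\epsilon)P_j+\epsilon Q_j=R$ for every $j$, which gives the claimed equality for all $j,k$.

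The degenerate case $\epsilon=0$ is handled separately. There the hypothesis gives $\tau=0$, so $\int (M-p_j)\,d\mu=0$ with a nonnegative integrand. Hence $p_j=M$ $\mu$-a.e. for all $j$, all $P_j$ coincide, and any $Q_j$ works.

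The only step needing care is measurability and finiteness of $M$: a finite maximum of measurable functions is measurable, and $\int M\,d\mu\leq \sum_j\int p_j\,d\mu = m<\infty$. I also need to confirm that $\TV$ does not depend on the choice of $\mu$, which follows because $\max_j p_j$ transforms covariantly under change of dominating measure. So there is no real obstacle; the main point is simply recognizing that the leftover mass $s$ should be assigned to an arbitrary common distribution.
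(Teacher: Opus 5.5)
Your proof is correct and uses essentially the same construction as the paper: the common mixture is built from the pointwise maximum $\max_k p_k$, and $\epsilon Q_j$ is, up to the slack, $(1-\epsilon)(\max_k p_k - p_j)$. The only difference is how you handle $\TV<\frac{\epsilon}{1-\epsilon}$. The paper first proves the equality case, then passes to a smaller level $\delta\le\epsilon$ with $\TV=\frac{\delta}{1-\delta}$ and pads via $Q_j=\frac{\epsilon-\delta}{\epsilon}P_j+\frac{\delta}{\epsilon}\wt{Q}_j$; you instead absorb the slack $s$ in one step into the common component $sP_1$, which is slightly more direct.
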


We will apply \Cref{lem:matching} to the conditional distribution $y\mid X\sim \mathcal{N}(X^\top\beta,\sigma^2)$. The following lemma bounds $\TV(P_1,\cdots,P_m)$ when each $P_j$ is a Gaussian location model.

\begin{Lemma}\label{lem:tv-gaussian}
For any $\theta_1,\cdots,\theta_m\in\mathbb{R}$ and any $\sigma >0$, we have
$$\TV\left(\mathcal{N}(\theta_1,\sigma^2),\cdots,\mathcal{N}(\theta_m,\sigma^2)\right)\leq \frac{\max_{1\leq j\leq m}\theta_j-\min_{1\leq j\leq m}\theta_j}{\sqrt{2\pi}\sigma}.$$
\end{Lemma}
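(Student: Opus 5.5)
Set $a=\min_j\theta_j$ and $b=\max_j\theta_j$, and take Lebesgue measure as the dominating measure in (\ref{eq:tv-m}). We need to bound $\int \max_j \phi(y;\theta_j,\sigma^2)\,dy - 1$. The key observation is that the pointwise maximum of Gaussian densities with a common variance and means in $[a,b]$ is dominated by an explicit envelope. Write $\phi_\sigma(u)=\phi(u;0,\sigma^2)$, which is symmetric and unimodal about $0$. Then for every $y$ and every $\theta\in[a,b]$,
\begin{equation*}
\phi_\sigma(y-\theta)\le g(y):=\begin{cases}\phi_\sigma(y-a), & y<a,\\ \phi_\sigma(0), & a\le y\le b,\\ \phi_\sigma(y-b), & y>b.\end{cases}
\end{equation*}
This holds because $|y-\theta|\ge |y-a|$ when $y<a$, $|y-\theta|\ge |y-b|$ when $y>b$, and $\phi_\sigma\le\phi_\sigma(0)$ everywhere. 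In particular $\max_j\phi(y;\theta_j,\sigma^2)\le g(y)$.

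Next we integrate the envelope. The two tails contribute $\int_{-\infty}^a\phi_\sigma(y-a)\,dy+\int_b^\infty\phi_\sigma(y-b)\,dy=\tfrac12+\tfrac12=1$. The middle piece contributes $(b-a)\phi_\sigma(0)=(b-a)/(\sqrt{2\pi}\sigma)$. Therefore
\begin{equation*}
\TV\left(\mathcal{N}(\theta_1,\sigma^2),\cdots,\mathcal{N}(\theta_m,\sigma^2)\right)\le \int g\,dy-1=\frac{b-a}{\sqrt{2\pi}\sigma},
\end{equation*}
which is the claimed bound.

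No step here is a real obstacle. The only point to check is the envelope inequality, and it follows directly from unimodality and symmetry of the Gaussian density. One could also take the dominating measure to be $\sum_j P_j$, but Lebesgue measure gives the same value of (\ref{eq:tv-m}): the integrand $\max_j dP_j/d\mu$ transforms consistently under a change of dominating measure, so (\ref{eq:tv-m}) does not depend on which one is chosen.
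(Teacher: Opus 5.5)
Your proof is correct and is essentially the paper's argument. The paper sorts the means, splits the line into the midpoint cells on which each $p_j$ is the maximum, and bounds each cell's integral by Gaussian half-masses on the two outer tails plus $\phi_\sigma(0)$ times length on $[\theta_1,\theta_m]$, which after telescoping is exactly your $\int g$; your pointwise envelope gives the same bound more directly, without the sorting and telescoping bookkeeping, and it covers $m=1$ with no special case.
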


\Cref{lem:matching} and \Cref{lem:tv-gaussian} together imply the existence of $\{Q_j\}_{j=1}^m$ such that $(1-\epsilon)\mathcal{N}(\theta_j,\sigma^2)+\epsilon Q_j$ does not depend on $j\in[m]$ as long as all $\{\theta_j/\sigma\}_{j=1}^m$ are close to each other within the order of $\epsilon$. More generally, for arbitrary $\{\theta_j/\sigma\}_{j=1}^m$, we can show that there still exist $\{Q_j\}_{j=1}^m$ such that $\{(1-\epsilon)\mathcal{N}(\theta_j,\sigma^2)+\epsilon Q_j\}_{j=1}^m$ are getting closer by the order of $\epsilon$.

\begin{Corollary}\label{cor:kl-for-fano}
For any $\theta_1,\cdots,\theta_m\in\mathbb{R}$ and any $\sigma>0$, there exist distributions $Q_1,\cdots,Q_m$ such that
\begin{eqnarray*}
&& \KL\left((1-\epsilon)\mathcal{N}(\theta_j,\sigma^2)+\epsilon Q_j\|(1-\epsilon)\mathcal{N}(\theta_k,\sigma^2)+\epsilon Q_k\right) \\
&\leq& 2\left(\frac{2|\theta_j|}{\sigma}-\sqrt{\frac{\pi}{2}}\epsilon\right)_+^2+2\left(\frac{2|\theta_k|}{\sigma}-\sqrt{\frac{\pi}{2}}\epsilon\right)_+^2,
\end{eqnarray*}
for all $j,k\in[m]$.
\end{Corollary}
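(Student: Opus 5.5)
The idea is to shrink every location $\theta_j$ toward $0$ by $\eta=\sqrt{\pi/2}\,\sigma\epsilon$ in absolute value, match the shrunken Gaussians exactly using \Cref{lem:matching}, and pay only for the leftover displacement. Define
\[
\theta_j'=\mathrm{sign}(\theta_j)\bigl(|\theta_j|-\eta\bigr)_+ .
\]
Then $|\theta_j-\theta_j'|\le \eta$ and $|\theta_j'|=(|\theta_j|-\eta)_+$. For each $j$ I will build a mixture of $\mathcal{N}(\theta_j,\sigma^2)$ with contamination that is an $\epsilon$-contamination of $\mathcal{N}(\theta_j',\sigma^2)$ as well. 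Then all $m$ mixtures sit on a common "reference" that is close to $\mathcal{N}(0,\sigma^2)$.

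The construction proceeds in two steps.

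\textbf{Pairing each $\theta_j$ with $\theta_j'$.} Apply \Cref{lem:matching} with $m=2$ to the pair $\mathcal{N}(\theta_j,\sigma^2)$ and $\mathcal{N}(\theta_j',\sigma^2)$. By \Cref{lem:tv-gaussian} their TV is at most $\eta/(\sqrt{2\pi}\sigma)=\epsilon/2\le\epsilon/(1-\epsilon)$. The lemma then gives $Q_j$ and $Q_j'$ with
\[
(1-\epsilon)\mathcal{N}(\theta_j,\sigma^2)+\epsilon Q_j=(1-\epsilon)\mathcal{N}(\theta_j',\sigma^2)+\epsilon Q_j'=:R_j .
\]
The constant $\sqrt{\pi/2}$ in the statement is exactly what is allowed if the full $\epsilon/(1-\epsilon)$ budget is not used. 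Spending only half of it gives slack in the next step.

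\textbf{Bounding the KL.} The observed distribution is $R_j=(1-\epsilon)\mathcal{N}(\theta_j',\sigma^2)+\epsilon Q_j'$. I need $\KL(R_j\|R_k)\le 2(2|\theta_j'|/\sigma)^2+2(2|\theta_k'|/\sigma)^2$; note $2|\theta_j'|/\sigma=(2|\theta_j|/\sigma-2\eta/\sigma)_+$, which is at most the stated quantity. I will argue with $\chi^2$ rather than KL directly, using $\KL(R_j\|R_k)\le\chi^2(R_j\|R_k)$ and controlling each $R_j$ against a common reference such as $\mathcal{N}(0,\sigma^2)$. The cleaner route is probably to change the construction: have each $R_j$ match a fixed reference $R_0$ built from $\mathcal{N}(0,\sigma^2)$ when $|\theta_j|\le\eta$, so that $R_j=R_0$ exactly in that case. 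When $|\theta_j|>\eta$, take the contamination $Q_j$ to be (part of) a Gaussian, so that $R_j$ is itself an $\epsilon$-mixture of Gaussians and explicit Gaussian $\chi^2$ formulas apply. Convexity of KL in the pair, together with $\KL(\mathcal{N}(a,\sigma^2)\|\mathcal{N}(b,\sigma^2))=(a-b)^2/(2\sigma^2)$, then bounds everything by $(|\theta_j'|+|\theta_k'|)^2/(2\sigma^2)\le (|\theta_j'|^2+|\theta_k'|^2)/\sigma^2$, well within the constants $8$ claimed.

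The main obstacle is the KL step: the contaminations $Q_j'$ coming from \Cref{lem:matching} are arbitrary, and KL between mixtures with different contamination components is not directly controlled. I would first try to make the contaminations explicit, for example taking $Q_j'$ proportional to the positive part of $\mathcal{N}(\theta_j,\sigma^2)-\mathcal{N}(\theta_j',\sigma^2)$, possibly plus Gaussian mass. The goal is a coupling in which $R_j$ and $R_k$ are each shared mixtures of Gaussians whose means differ by at most $|\theta_j'|+|\theta_k'|$, so that joint convexity of KL finishes the proof. If that fails, I would fall back on the generous constants $2$ and $2$ in the statement, bounding $\KL(R_j\|R_k)\le\chi^2$ through $R_0$ with a triangle-type inequality, which costs a factor of $2$.
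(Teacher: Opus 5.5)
\textbf{The gap is the KL step.} It is never carried out, and the tools you list would not close it.

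Step 1 gives, separately for each $j$, a representation $R_j=(1-\epsilon)\mathcal{N}(\theta_j',\sigma^2)+\epsilon Q_j'$ in which $Q_j'$ depends on $j$ and is uncontrolled. Joint convexity reduces $\KL(R_j\|R_k)$ to the Gaussian parts only when the contamination components coincide; otherwise you also pay $\epsilon\KL(Q_j'\|Q_k')$. So Step 1 gains nothing.

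Worse, your intermediate target $8(|\theta_j'|^2+|\theta_k'|^2)/\sigma^2$ is zero whenever $|\theta_j|,|\theta_k|\le\eta$. It therefore forces $R_j=R_k$ for all such indices at once, and pairwise matching to $\mathcal{N}(0,\sigma^2)$ does not give this. For example, with $\theta_j=\eta$ and $\theta_k=-\eta$, the construction in the proof of \Cref{lem:matching} returns multiples of $\max\{\phi(\cdot;\eta,\sigma^2),\phi(\cdot;0,\sigma^2)\}$ and of its mirror image, which differ.

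Your claim that convexity yields $(|\theta_j'|+|\theta_k'|)^2/(2\sigma^2)$ is also unsupported. The natural shared-contamination coupling gives $j$ the contamination of some matched index $\ell$. Its Gaussian parts then sit at the \emph{original} means $\theta_j,\theta_\ell$, whose gap can be $|\theta_j|+\eta$ even when $|\theta_j'|+|\theta_\ell'|\approx 0$.

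The $\chi^2$ fallback fails too: neither $\chi^2$ nor KL satisfies a triangle inequality up to a constant factor. Finally, $\sqrt{\pi/2}$ is not ``half the budget.'' It is calibrated so that all means in an interval of width $2\eta=\sqrt{2\pi}\sigma\epsilon$ have $m$-fold total variation at most $\epsilon$, by \Cref{lem:tv-gaussian}.

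\textbf{The missing observation is that you are aiming higher than the statement requires.} Because of the factor $2$ in $2|\theta_j|$, every $j$ with $|\theta_j|>\eta$ satisfies $(2|\theta_j|/\sigma-\sqrt{\pi/2}\,\epsilon)_+\ge|\theta_j|/\sigma$. The right-hand side therefore already dominates $2\theta_j^2/\sigma^2$, so you can afford the full uncontaminated Gaussian KL and no shrinkage is needed.

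The paper proceeds as follows. Let $\mathcal{J}=\{j:|\theta_j|\le\eta\}$. Since these means have range at most $2\eta$, \Cref{lem:tv-gaussian} bounds their $m$-fold TV by $\epsilon$, and \Cref{lem:matching} then makes all mixtures with $j\in\mathcal{J}$ identical. Every $j\notin\mathcal{J}$ receives the contamination $Q_\ell$ of a fixed $\ell\in\mathcal{J}$, or $\mathcal{N}(0,\sigma^2)$ if $\mathcal{J}=\varnothing$. Any two mixtures then either coincide or share a contamination component. Joint convexity gives $\KL\le(\theta_j-\theta_\ell)^2/(2\sigma^2)\le 2\theta_j^2/\sigma^2$, using $|\theta_\ell|\le\eta<|\theta_j|$. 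When both indices lie outside $\mathcal{J}$, it gives $(\theta_j-\theta_k)^2/(2\sigma^2)\le(\theta_j^2+\theta_k^2)/\sigma^2$.

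If you want your sharper shrinkage bound, replace pairwise matching by a single translated reference. Let $M(y)=\sup_{|\theta|\le\eta}\phi(y;\theta,\sigma^2)$, so that $\int M=1+\epsilon$. Then $R_j:=M(\cdot-\theta_j')/(1+\epsilon)\ge(1-\epsilon)\phi(\cdot;\theta_j,\sigma^2)$ is a valid $\epsilon$-contamination of $\mathcal{N}(\theta_j,\sigma^2)$. Moreover, $\log M$ is concave, its derivative is $\sigma^{-2}$-Lipschitz, and that derivative has mean zero under $M/(1+\epsilon)$. Together these give $\KL(R_j\|R_k)\le(\theta_j'-\theta_k')^2/(2\sigma^2)$.
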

\begin{proof}
It suffices to consider $\sigma=1$.
Define $\mathcal{J}=\{j\in[m]: |\theta_j|\leq \sqrt{\pi/2}\epsilon\}$. For any $j,k\in\mathcal{J}$, we have $|\theta_j-\theta_k|\leq \sqrt{2\pi}\epsilon$. By \Cref{lem:tv-gaussian}, the total variation of $\{\mathcal{N}(\theta_j,1):j\in\mathcal{J}\}$ is bounded by $\epsilon$. Using \Cref{lem:matching}, we know that there exist $\{Q_j:j\in\mathcal{J}\}$ such that $(1-\epsilon)\mathcal{N}(\theta_j,1)+\epsilon Q_j$ is the same distribution for all $j\in\mathcal{J}$. If $\mathcal{J}\neq \varnothing$, take $\ell$ to be the smallest element in $\mathcal{J}$ and then set $Q_j=Q_{\ell}$ for all $j\notin\mathcal{J}$. Otherwise if $\mathcal{J}= \varnothing$, we set $Q_j=\mathcal{N}(0,1)$ for all $j\notin\mathcal{J}$.

We write $P_j=(1-\epsilon)\mathcal{N}(\theta_j,1)+\epsilon Q_j$ for $j\in[m]$ with the $Q_1,\cdots,Q_m$ constructed above. Suppose $j,k\in\mathcal{J}$, we have $\KL(P_j\|P_k)=0$. For any $j\notin\mathcal{J}$, we must have $|\theta_j|\leq 2|\theta_j|-\sqrt{\pi/2}\epsilon$. Suppose $j\notin \mathcal{J}$ and $k\in\mathcal{J}$, we have
$$\KL(P_j\|P_k)=\KL(P_j\|P_{\ell})\leq \KL(\mathcal{N}(\theta_j,1)\|\mathcal{N}(\theta_{\ell},1))=\frac{1}{2}(\theta_j-\theta_{\ell})^2\leq 2\theta_j^2\leq 2\left(2|\theta_j|-\sqrt{\pi/2}\epsilon\right)_+^2.$$
The same bound holds for $\KL(P_k\|P_j)$ with a similar argument. Suppose $j,k\notin\mathcal{J}$, we have
$$\KL(P_j\|P_k)\leq \KL(\mathcal{N}(\theta_j,1)\|\mathcal{N}(\theta_k,1))\leq \theta_j^2+\theta_k^2\leq \left(2|\theta_j|-\sqrt{\pi/2}\epsilon\right)_+^2+\left(2|\theta_k|-\sqrt{\pi/2}\epsilon\right)_+^2.$$
Thus, the desired bound holds for all the four cases.
\end{proof}

Applying \Cref{cor:kl-for-fano} to $\{\mathcal{N}(\delta X^{\top}v_j,\sigma^2)\}_{j=1}^m$ conditioning on the value of $X$, we know that there exist $Q_{1,X},\cdots,Q_{m,X}$ such that
\begin{eqnarray}
\nonumber && \KL\left((1-\epsilon)\mathcal{N}(\delta X^{\top}v_j,\sigma^2)+\epsilon Q_{j,X}\|(1-\epsilon)\mathcal{N}(\delta X^{\top}v_k,\sigma^2)+\epsilon Q_{k,X}\right) \\
\label{eq:klbd|X} &\leq& 2\left(\frac{2\delta|X^{\top}v_j|}{\sigma}-\sqrt{\frac{\pi}{2}}\epsilon\right)_+^2+2\left(\frac{2\delta|X^{\top}v_k|}{\sigma}-\sqrt{\frac{\pi}{2}}\epsilon\right)_+^2,
\end{eqnarray}
for all $j,k\in[m]$. The bound (\ref{eq:klbd|X}) is zero when both $|X^{\top}v_j|$ and $|X^{\top}v_k|$ are small, which agrees with the intuition behind the upper bound construction (\ref{eq:trunc-depth}) in the sense that the statistical information is from the tail of the one-dimensional projection of the covariate. Recall that $\mathbb{P}_j$ stands for the joint distribution (\ref{eq:reg-design-l}) and (\ref{eq:reg-huber-l}), we can thus bound the Kullback-Leibler divergence on the right hand side of (\ref{eq:fano}) by
\begin{eqnarray}
\nonumber \KL(\mathbb{P}_j\|\mathbb{P}_k) &\leq& 2\mathbb{E}_{X\sim \mathcal{N}(0,I_\dimension)}\left(\frac{2\delta|X^{\top}v_j|}{\sigma}-\sqrt{\frac{\pi}{2}}\epsilon\right)_+^2+2\mathbb{E}_{X\sim \mathcal{N}(0,I_\dimension)}\left(\frac{2\delta|X^{\top}v_k|}{\sigma}-\sqrt{\frac{\pi}{2}}\epsilon\right)_+^2 \\
\nonumber &=& 4\mathbb{E}_{G\sim \mathcal{N}(0,1)}\left(\frac{2\delta|G|}{\sigma}-\sqrt{\frac{\pi}{2}}\epsilon\right)_+^2 \\
\nonumber &=& 8\int_0^{\infty}\left(2\delta t/\sigma-\sqrt{\pi/2}\epsilon\right)_+^2\frac{1}{\sqrt{2\pi}}e^{-t^2/2}dt \\
\nonumber &\leq& 8\sqrt{2}\max_{t>0}\left[\left(2\delta t/\sigma-\sqrt{\pi/2}\epsilon\right)_+^2e^{-t^2/4}\right]\int_0^{\infty}\frac{1}{\sqrt{4\pi}}e^{-t^2/4}dt \\
\label{eq:kl-bd-lb} &=& 4\sqrt{2}\max_{t>0}\left[\left(2\delta t/\sigma-\sqrt{\pi/2}\epsilon\right)_+^2e^{-t^2/4}\right].
\end{eqnarray}
In order that $n\max_{j,k\in[m]}\KL(\mathbb{P}_j\|\mathbb{P}_k)\leq \frac{1}{4}\log m$ so that the right hand side of Fano's inequality (\ref{eq:fano}) is a constant, it suffices to set $\delta$ so that
\begin{equation}
4\sqrt{2}\left(2\delta t/\sigma-\sqrt{\pi/2}\epsilon\right)_+^2e^{-t^2/4}\leq \frac{\dimension\log 2}{4n}, \label{eq:lb-requirement}
\end{equation}
holds for all $t>0$. Rearranging (\ref{eq:lb-requirement}) leads to the choice
\begin{equation}
\delta=\min_{t>0}\left[\frac{\sigma}{2t}\left(\sqrt{\frac{\pi}{2}}\epsilon + \frac{1}{4}\sqrt{\frac{\dimension\log 2}{\sqrt{2}n}}e^{t^2/8}\right)\right]\asymp \sigma\left(\sqrt{\frac{\dimension}{n}}+\frac{\epsilon}{\sqrt{\log(n\epsilon^2/\dimension+e)}}\right), \label{eq:delta-rate-lb-pf}
\end{equation}
which matches the upper bound rate (\ref{eq:opt-rate}). We summarize this lower bound result in the following theorem.

\begin{Theorem}[Information-theoretic Lower Bound]
\label{thm:info-lower}
There exists some universal constant $C>0$ such that
$$\inf_{\wh{\beta}}\sup_{\beta,Q}\mathbb{P}_{\beta,\sigma,Q}\left(\|\wh{\beta}-\beta\|_2\geq C\sigma\left(\sqrt{\frac{\dimension}{n}}+\frac{\epsilon}{\sqrt{\log(n\epsilon^2/\dimension+e)}}\right)\right)\geq\frac{1}{2},$$
where $\mathbb{P}_{\beta,\sigma,Q}$ stands for the data distribution of \Cref{model:adaptive}.
\end{Theorem}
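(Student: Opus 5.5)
The plan is to split the lower bound into its two terms and handle each by Fano's inequality (\ref{eq:fano}) over the same packing. The claimed bound $C\sigma(\sqrt{\dimension/n}+\epsilon/\sqrt{\log(n\epsilon^2/\dimension+e)})$ is within a factor two of the maximum of the two terms. So it suffices to show that, for a small universal constant $c$, each of the events $\|\wh\beta-\beta\|_2\geq c\sigma\sqrt{\dimension/n}$ and $\|\wh\beta-\beta\|_2\geq c\sigma\epsilon/\sqrt{\log(n\epsilon^2/\dimension+e)}$ has worst-case probability at least $1/2$. We will actually prove the single statement with $\delta$ from (\ref{eq:delta-rate-lb-pf}), which already covers both terms.

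First fix the $\frac12$-packing $v_1,\dots,v_m$ of $\mathcal{S}^{\dimension-1}$ with $m\geq 2^\dimension$, and the hypotheses $\beta_j=\delta v_j$. Since $\|\beta_j-\beta_k\|_2>\delta/2$, a test reduction yields (\ref{eq:fano}). For each $X$, apply \Cref{cor:kl-for-fano} with $\theta_j=\delta X^\top v_j$ to get $Q_{j,X}$. One must check that $Q_{j,X}$ can be chosen measurably in $X$. The construction in \Cref{lem:matching} is explicit, e.g. $\epsilon Q_j\propto \max_k P_k - (1-\epsilon)P_j$ up to normalization, so this choice is measurable. The chain rule $\KL(\mathbb{P}_j\|\mathbb{P}_k)=\E_X\KL(\cdot\mid X)$ then gives (\ref{eq:klbd|X}) and the bound (\ref{eq:kl-bd-lb}).

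Next choose $\delta$ as in (\ref{eq:delta-rate-lb-pf}). For every $t>0$, $\delta\leq \frac{\sigma}{2t}(\sqrt{\pi/2}\epsilon+a e^{t^2/8})$ with $a=\frac14\sqrt{\dimension\log2/(\sqrt2 n)}$. This gives $(2\delta t/\sigma-\sqrt{\pi/2}\epsilon)_+\leq a e^{t^2/8}$, so (\ref{eq:lb-requirement}) holds for all $t$ and $n\max\KL\leq \frac{\dimension\log 2}{4}\cdot$const $\leq\frac14\log m$. For $\dimension\geq 2$ or so, we have $\log2/\log m\leq 1/4$, and Fano gives probability at least $1/2$. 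The cases $\dimension=1$, where $m$ may be only $2$, will use a separate two-point Le Cam argument with the same KL bound, or a packing on the segment.

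The main obstacle is the asymptotic evaluation $\delta\asymp\sigma(\sqrt{\dimension/n}+\epsilon/\sqrt{\log(n\epsilon^2/\dimension+e)})$; only the lower bound direction $\gtrsim$ is needed. Write $r=\sqrt{\dimension/n}$ and $f(t)=\frac1t(\epsilon+r e^{t^2/8})$ up to constants. The goal is $\min_t f(t)\gtrsim r+\epsilon/\sqrt{L}$ with $L=\log(n\epsilon^2/\dimension+e)$. For the $r$ term, use $e^{t^2/8}/t\gtrsim 1$ for all $t>0$. For the $\epsilon$ term, split into two cases. If $t\leq C_0\sqrt{L}$, then $\epsilon/t\gtrsim\epsilon/\sqrt L$. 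If $t> C_0\sqrt L$, then $re^{t^2/8}/t$ is increasing in $t$ for large $t$ and is at least $r e^{C_0^2L/8}/(C_0\sqrt L)\geq r(n\epsilon^2/\dimension)^{C_0^2/8}/(C_0\sqrt L)\geq \epsilon/\sqrt L$ once $C_0\geq 2$. The regime $t\lesssim 1$ is handled trivially by $\epsilon/t$. Finally, enlarging constants absorbs everything into $C$.
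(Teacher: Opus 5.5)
Your proposal is correct and is essentially the paper's proof. It uses Fano's inequality (\ref{eq:fano}) over the $\frac12$-packing with $\beta_j=\delta v_j$, contamination $Q_{j,X}$ from \Cref{cor:kl-for-fano} applied conditionally on $X$, the KL bound (\ref{eq:kl-bd-lb}), and $\delta$ chosen as in (\ref{eq:delta-rate-lb-pf}); your measurability remark, small-$\dimension$ fix, and explicit lower bound on $\min_{t>0}$ fill in steps the paper only asserts. Two harmless slips: for $t>C_0\sqrt{L}$ the clean bound is $e^{C_0^2L/8}\geq e^{L/2}\geq\sqrt{n\epsilon^2/\dimension}$, which gives $\epsilon/(C_0\sqrt{L})$ rather than $\epsilon/\sqrt{L}$; and small $\dimension$ means $\dimension\leq 3$, not just $\dimension=1$ (with only $m\geq 2^{\dimension}$ you need $\dimension\geq 4$ for $\log 2/\log m\leq\frac14$), where the segment packing is the right fix, since a two-point Le Cam bound only yields $\frac12(1-\TV)<\frac12$.
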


\section{Information-Computation Tradeoffs}\label{sec:SQ}


The rate-optimal estimator (\ref{eq:depth-est}) requires maximizing the truncated depth function, which is a computationally infeasible problem \citep{amenta2000regression} when $\dimension$ is large. On the other hand, the estimator based on median regression,
\begin{equation}
\wh{\beta}_{\mathrm{Median-Regression}}=\argmin_{\widetilde{\beta} \in \R^\dimension}\frac{1}{n}\sum_{i=1}^n|y_i-X_i^\top\widetilde{\beta}|, \label{eq:md-reg-hd}
\end{equation}
can be computed efficiently via linear programming. The statistical error of (\ref{eq:md-reg-hd}) is given by the following theorem.

\begin{Theorem}\label{thm:md-reg-hd}
Consider data generated from \Cref{model:adaptive} and the estimator (\ref{eq:md-reg-hd}).
For any $\alpha\in(0,1)$, there exist $C,c>0$ such that whenever $\sqrt{\frac{\dimension}{n}}+\epsilon \leq c$, the estimator (\ref{eq:md-reg-hd}) satisfies
\begin{equation}
\|\wh{\beta}_{\mathrm{Median-Regression}}-\beta\|_2\leq C\sigma\left(\sqrt{\frac{\dimension}{n}}+\epsilon\right),\label{eq:med-reg-error-rate}
\end{equation}
with probability at least $1-\alpha$.
\end{Theorem}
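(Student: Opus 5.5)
We prove \Cref{thm:md-reg-hd} with a standard convexity-plus-uniform-concentration argument for the LAD objective. Write $\Delta=\widetilde\beta-\beta$ and model the responses as follows. Draw i.i.d.\ indicators $B_i\sim\mathrm{Bernoulli}(\epsilon)$. If $B_i=0$, set $y_i=X_i^\top\beta+z_i$ with $z_i\sim\cN(0,\sigma^2)$; if $B_i=1$, draw $y_i\sim Q_{X_i}$. By scaling we may take $\sigma=1$. Define
\[
F_n(\Delta)=\frac1n\sum_{i=1}^n\bigl(|y_i-X_i^\top\beta-X_i^\top\Delta|-|y_i-X_i^\top\beta|\bigr).
\]
It suffices to show that, with probability $1-\alpha$, $F_n(\Delta)>0$ for every $\Delta$ with $\|\Delta\|_2=r:=C(\sqrt{p/n}+\epsilon)$. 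Since $F_n$ is convex with $F_n(0)=0$, this forces the minimizer into the ball of radius $r$.

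\textbf{Splitting the objective.} Split $F_n=F_n^{\mathrm{in}}+F_n^{\mathrm{out}}$ over $B_i=0$ and $B_i=1$. For outliers, the triangle inequality gives $|F_n^{\mathrm{out}}(\Delta)|\le \frac1n\sum_i B_i|X_i^\top\Delta|$, and this bound holds regardless of the adversary. The number of outliers satisfies $\sum_iB_i\le 2\epsilon n+O(1)$ with high probability. Take $S$ to be any subset of size $k\asymp\epsilon n+1$. Then $\sup_{\|u\|=1}\frac1n\sum_{i\in S}|X_i^\top u|\le \frac{\sqrt{k}}{n}\|X_S\|_{op}$. Using $\|X_S\|_{op}\lesssim\sqrt{k}+\sqrt p+\sqrt{k\log(en/k)}$, uniformly over subsets (union bound over $\binom nk$), we would lose a $\sqrt{\log(1/\epsilon)}$ factor.

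This is where we must be careful. The indicators $B_i$ are independent of $X_i$ (only $y_i$ is adaptive), so the outlier index set is a uniformly random subset independent of the $X$'s. No union bound over subsets is needed. Condition on $B$ and apply the matrix deviation bound to the $\approx\epsilon n$ Gaussian rows:
\[
\sup_{\|u\|=1}\frac1n\sum_iB_i|X_i^\top u|\le \epsilon\sqrt{2/\pi}+C'\sqrt{\epsilon p/n}+\frac{C'}{\sqrt n}\lesssim \epsilon+\sqrt{p/n}.
\]
Hence $F_n^{\mathrm{out}}(\Delta)\ge -c_1(\epsilon+\sqrt{p/n})\,r$.

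\textbf{Inliers.} Let $g(a)=\E|z-a|-\E|z|$ for $z\sim\cN(0,1)$. This function is even and convex, with $g(a)\ge c_2\min(a^2,|a|)$. Since $X^\top\Delta\sim\cN(0,r^2)$ and $r\le C c$ is small, the population version satisfies $\E F^{\mathrm{in}}(\Delta)\ge (1-\epsilon)c_3 r^2$. For fluctuations, each summand is $1$-Lipschitz in $X_i^\top\Delta$, and the indicator $(1-B_i)$ is fixed after conditioning. A symmetrization and contraction argument (or Talagrand's inequality) over the sphere of radius $r$ gives
\[
\sup_{\|\Delta\|=r}|F_n^{\mathrm{in}}-\E F_n^{\mathrm{in}}|\lesssim r\sqrt{p/n}
\]
with probability $1-\alpha/3$. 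The key input is $\E\sup_{\|u\|=1}\frac1n\sum\xi_iX_i^\top u\lesssim\sqrt{p/n}$.

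\textbf{Combining.} Putting the pieces together,
\[
F_n(\Delta)\ge c_3r^2/2-c_4r(\sqrt{p/n}+\epsilon),
\]
which is positive once $C>2c_4/c_3$. The condition $\sqrt{p/n}+\epsilon\le c$ keeps $r$ in the quadratic regime of $g$.

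\textbf{Main obstacle.} The main obstacle is the outlier term. A naive worst-case bound over subsets loses a $\sqrt{\log(1/\epsilon)}$ factor. The plan avoids this loss by exploiting the independence of $B_i$ from $X_i$ and only then applying the matrix deviation bound.
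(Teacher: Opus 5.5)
Your proposal is correct and follows essentially the same argument as the paper. You reduce to the sphere $\|\Delta\|_2=r$ by convexity, split inliers and outliers via Bernoulli indicators independent of $X$, lower bound the inlier population loss by its quadratic curvature with fluctuations controlled by symmetrization and contraction, and bound the outlier term by $r\sup_u \frac1n\sum_i B_i|X_i^\top u|$. The only cosmetic difference is that you control this last supremum conditionally on $B$ via an operator-norm (matrix-deviation) bound, whereas the paper applies the first inequality of its symmetrization--contraction lemma to the outlier subset; both rely on the independence of the outlier indices from the covariates that you correctly highlight.
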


We note that the above error bound is the same as the minimax rate of estimating $\beta$ under \Cref{model:huber}, but is sub-optimal under \Cref{model:adaptive} by comparing it with the faster rate (\ref{eq:opt-rate}) in terms of the dependence on $\epsilon$.
Furthermore, under the stronger \Cref{model:huber}, 
the asymptotic error of $\wh{\beta}_{\mathrm{Median-Regression}}$ is unbounded even in the univariate case.\footnote{The lower bound follows by considering the following noise contamination for \modelref{model:huber} with $x_0 , \beta_0 \to \infty$: $(1-\epsilon)P_{0,1} + \epsilon D_{x_0, \beta_0 x_0}$, where $D_{x_0,y_0}$ denotes the point mass at $(x_0,y_0)$.}

In this section, we will show evidence that the $O(\sigma\epsilon)$ term in (\ref{eq:med-reg-error-rate}) cannot be improved using a polynomial-time algorithm by establishing a matching statistical query lower bound. The result thus suggests that achieving the optimal rate (\ref{eq:opt-rate}) implies a necessary computational cost.

\subsection{Statistical Query Model}

The statistical query (SQ) model
\citep{kearns1998efficient} is a common framework for
providing rigorous evidence of computational barriers
in high-dimensional statistical problems
\citep{FelGRVX17,DiaKS17,DiaKRS23,DIKR25}. 
The reader is referred to~\cite{Feldman16b} for a survey. 
In the SQ framework, one does not
have direct access to samples generated from some
distribution $P$. Instead, one only has access to
an SQ oracle, which can be interpreted as a
statistic of the form $\frac{1}{n}\sum_{i=1}^n q(X_i)$.
Provided that $q$ is bounded, we have
$$
	\left|\frac{1}{n}\sum_{i=1}^n q(X_i)-\E_{X\sim
		P}[q(X)]\right|
	=
	O_{\mathbb{P}}\Bigl(\frac{1}{\sqrt{n}}\Bigr).
$$
More generally, an SQ oracle responds to a query with some
number that is close to $\E_{X\sim P}[q(X)]$, such
as (but not necessarily) the empirical average over a set
of i.i.d.\ samples. An SQ algorithm is only allowed to
compute an output by adaptively querying the oracle. The
total number of queries made by an algorithm can be
understood as a surrogate for its computational complexity.
This setting naturally includes many optimization
algorithms such as gradient descent and procedures derived
from M-estimators.

We first define the following SQ oracle.

\begin{Definition}[STAT Oracle]
	Let $P$ be a distribution on the domain $\cX$ with sigma-algebra $\cE$.
	A statistical query is a bounded measurable
	function $q : \cX \to [-1, 1]$.
	For a $\tau \in (0,1)$, the $\STAT_{P,\tau}$ oracle 
  responds to the query $q$ with a value $v = \STAT_{P,\tau}(q)$
	such that $|v - \E_{X\sim P} [q(X)]| \leq \tau$.
  We call $\tau$ the tolerance of the SQ oracle.
\end{Definition}

In addition to the STAT oracle above, another popular oracle in the literature is
 the VSTAT oracle~\cite[Definition 2.3]{FelGRVX17}.
Without going into the details, we note that the STAT and VSTAT oracles are quadratically related,
 and hence our super-polynomial lower bounds on the number of queries to the STAT oracle also imply similar lower bounds for the VSTAT oracle.

Note that the definition is abstract and does not
involve actual samples generated by $P$. In a statistical
setting where samples are available, given a query $q$, one
can implement a $\STAT_{P,\tau}$ oracle by reporting the
sample average of $q(X_1),\dots,q(X_n)$ for $n =
	\Theta(1/\tau^2)$ i.i.d.\ samples from the
distribution $P$. Thus, if an SQ algorithm, formally defined below, needs to access
$\STAT_{P,\tau}$ for a small $\tau$, this is interpreted as
requiring higher sample complexity; more broadly, we may
interpret the sample complexity as $n= 1/\tau^c$ for some $c>0$.

More formally, we define $\mathcal{F}$ to be the set of all statistical queries, which is the set of all measurable functions bounded in $[-1,1]$.
An SQ oracle $\STAT_{P,\tau}$ is a map from $\cF \to
	\R$ such that for all $q \in \cF$, it holds that
$|\STAT_{P,\tau}(q) - \E_{X\sim P} [q(X)]|\leq \tau$. We define
$\mathtt{Oracle}_{P,\tau}$ to be the set of all such
oracles, i.e.,
$$
\,\mathtt{Oracle}_{P,\tau} := \left\{ g: \cF \to
	\R \text{ such that } \forall q\in \cF \,,\, |g(q)- \E_P[q]| \leq \tau\right\}.
$$

A statistical query algorithm $\cA:=\cA_{k,\tau}$,
parameterized by the number of queries $k$ and tolerance
$\tau$, interacts with an (unknown) SQ oracle
$\STAT_{P,\tau} \in \,\mathtt{Oracle}_{P,\tau}$ in $k$
rounds as follows. For the $i$-th round with $i \in [k]$,
$\cA$ (randomly) chooses a statistical query $q_i \in \cF$
based on the history $(q_j,v_j)_{j=1}^{i-1}$, and obtains
the value $v_i = \STAT_{P,\tau}(q_i)$. At the end of $k$ rounds, 
the algorithm
outputs a value $\cA(\STAT_{P,\tau})$, where the output
takes values in $\R^\dimension$ for the estimation problem
and $\{0,1\}$ for the testing problem (defined below). We define
$\text{SQ}(k,\tau)$ to be the class of all such SQ
algorithms.

We define $\cD_{\beta,\sigma,\epsilon}$ to be the class of all
distributions on $(X,y)$ that satisfy \Cref{model:adaptive}.

\begin{Definition}[SQ Estimation]
	We say that an SQ algorithm $\cA$ solves linear regression under \Cref{model:adaptive} with 
  error $\delta$, $k$ 
  queries, and tolerance $\tau$, if $\cA \in \text{SQ}(k,\tau)$ and
	\begin{align*}
		\sup_{(\beta,\sigma): \|\beta\|_2 \leq 1, \sigma\in[1/2,1]}\,\,\, \sup_{P \in \cD_{\beta,\sigma,\epsilon}} \,\,\,\sup_{ \STAT_{P,\tau} \in \,\mathtt{Oracle}_{P,\tau} } \bbP\left(\bigl\|\cA (\STAT_{P,\tau}) -\beta \bigr\|_2 >  \sigma\delta\right) \leq 0.1\,,
	\end{align*}
	where the probability is taken over the randomness of the algorithm.
\end{Definition}
That is,  for any $\beta$ and $\sigma$ with $\|\beta\|_2\leq 1$ and $\sigma\in[1/2,1]$\footnote{This constraints make the lower bound only stronger.},
  any $P \in \cD_{\beta,\sigma,\epsilon}$, and
any oracle $\STAT_{P,\tau}$,
 the algorithm outputs $\widehat{\beta}$ such that
   with probability at least $0.9$ over the randomness of the algorithm, $\|\widehat{\beta}- \beta \|_2\leq \sigma\delta$.

\subsection{Statistical Query Lower Bound}
We now present our main computational lower bound.

\begin{Theorem}[SQ Lower Bound]\label{thm:ThmFormalSQ}
	Let the contamination rate $\epsilon \in (0,1/2)$ and 
  let the accuracy threshold $\delta$ be such that $\epsilon/\delta \gtrsim 1$.
   Let the dimension $\dimension\geq 3$ be large enough: 
   $\dimension \gtrsim \left((\epsilon/\delta) \log \dimension\right)^{\Omega(1)}$.
	Any statistical query algorithm $\cA$ that solves linear regression under \Cref{model:adaptive} with 
  error $\delta$, $k$ 
  queries, and tolerance $\tau$ must
	satisfy either
	\begin{itemize}
		\item (large number of queries) $k \geq 2^{\Omega(\dimension^{\Omega(1)})}$ many queries, or
		\item (small tolerance) $\tau \leq O(\dimension^{- \Omega(\epsilon^2/\delta^2)})$.
	\end{itemize}
\end{Theorem}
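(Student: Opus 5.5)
We reduce estimation to a hidden-direction testing problem and use the standard non-Gaussian component analysis (NGCA) framework for SQ lower bounds (Diakonikolas--Kane--Stewart). The reduction is as follows. For a unit vector $v\in\cS^{\dimension-1}$, let $P_v$ be a distribution in $\cD_{\beta,\sigma,\epsilon}$ with $\beta = 2\delta' v$, where $\delta'\asymp \delta$ is slightly larger than $\delta$. Let $P_0$ be a null distribution with $\beta=0$, also in the class, e.g.\ $X\sim\cN(0,I_\dimension)$ and $y$ independent of $X$ with some one-dimensional law $R$. An algorithm achieving error $\sigma\delta$ distinguishes $P_0$ from $\{P_v\}$, since $\|\beta\|_2 = 2\delta'>2\sigma\delta$. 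It then suffices to construct $P_v$ so that, in the orthogonal complement of $v$, $X$ is standard Gaussian and independent of $(v^\top X, y)$. In the direction $v$, the pair $(v^\top X,y)$ should have a joint law $A$ whose conditional law given $y$ matches the first $k$ moments of $\cN(0,1)$, with $k\asymp \epsilon^2/\delta^2$.

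Given such an $A$, the known SQ lemma for hidden-direction conditional moment matching (as in the SQ lower bounds for robust/oblivious regression) gives the result. For a packing of $2^{\dimension^{\Omega(1)}}$ nearly orthogonal directions with $|u^\top v|\le \dimension^{-\Omega(1)}$, the pairwise correlations are bounded by $|u^\top v|^{k+1}\chi^2(A,\cN(0,1)\otimes R)$. Any SQ algorithm therefore needs either $2^{\dimension^{\Omega(1)}}$ queries or tolerance $\tau\le \dimension^{-\Omega(k)}\sqrt{\chi^2}$. The $\chi^2$ term must be at most $\dimension^{O(k)}$ or $\exp(O(k))$, so that it is absorbed under the assumption $\dimension\gtrsim((\epsilon/\delta)\log\dimension)^{\Omega(1)}$. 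This gives $\tau\le \dimension^{-\Omega(\epsilon^2/\delta^2)}$.

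The core work, and the main obstacle, is the univariate construction. We need a joint law of $(Z,y)$ with $Z\sim\cN(0,1)$ and $y\mid Z\sim(1-\epsilon)\cN(\delta' Z,\sigma^2)+\epsilon Q_Z$, such that for every $y$ the conditional density of $Z$ given $y$ is proportional to $\phi(z)\,g_y(z)$ and has its first $k$ moments equal to those of $\cN(0,1)$. We will construct it by choosing the joint density
$$A(z,y) = \phi(z)\,h(y)\,(1+f_y(z)),$$
where $f_y$ is a perturbation orthogonal to all polynomials of degree $\le k$ under $\phi$. We then require $A(z,y)\ge(1-\epsilon)\phi(z)\phi_\sigma(y-\delta' z)$ pointwise. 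The remaining mass is the adversary's $\epsilon Q_Z$, which is legitimate because it may depend on $z$. The total-mass constraint reads $\int A=1$, so the excess must be exactly $\epsilon$ times the $Z$-marginal. Concretely, the inlier part $(1-\epsilon)\phi(z)\phi_\sigma(y-\delta' z)$, viewed as a function of $z$ for fixed $y$, has low-degree Hermite coefficients. For $|z|\lesssim \epsilon/\delta$, the shift $\delta' z$ is small relative to $\epsilon$; this is the regime in which \Cref{lem:tv-gaussian} and \Cref{lem:matching} allow matching. For each $y$, we add a nonnegative function of $z$ supported mostly on $|z|\le C\epsilon/\delta$ that cancels the Hermite coefficients of degree $1,\dots,k$. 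Such a function exists with $L^1$ cost at most $\epsilon$ when $k\lesssim(\epsilon/\delta)^2$. This uses the fact that a polynomial of degree $k$ bounded on the Gaussian bulk can be controlled on $[-\sqrt k,\sqrt k]$: the tail region $|z|\gtrsim\epsilon/\delta$ carries Gaussian weight $e^{-\Omega(\epsilon^2/\delta^2)}$, which dominates the degree-$k$ polynomial growth.

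We carry out the cancellation by duality, in the form of a Hahn--Banach/LP argument. Nonnegative $z\mapsto Q$ masses with prescribed low moments exist if and only if no degree-$k$ polynomial $p$ is nonnegative on the region but has a negative pairing with the inlier deviation. We bound the deviation pairing using the Gaussian bulk estimate. Finally, we verify that the marginal of $Z$ remains exactly $\cN(0,1)$ by fixing the per-$z$ contamination masses. We also check that $\chi^2$ is finite and bounded by $\exp(O(k))$, by mixing a small Gaussian floor into $Q$. The uniform-in-$y$ moment matching under the exact constraint that the contamination fraction equals $\epsilon$ for every $z$ is the delicate point. If it proves too rigid, we first try to match moments only approximately, up to $\dimension^{-\Omega(k)}$, which still suffices for the SQ correlation bound.
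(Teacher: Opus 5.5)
There is a genuine gap. Your reduction to conditional NGCA, the choice $k\asymp\epsilon^2/\delta^2$, the duality argument with the Gaussian bulk estimate, and the $\chi^2$ step all follow the paper's strategy. But the step you yourself flag as delicate is left unresolved, and the per-$y$ mechanism you describe cannot resolve it.

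In \Cref{model:adaptive}, the contamination $c(z,y)=\epsilon\phi(z)Q_z(y)$ must satisfy two families of constraints at once (take $\sigma^2=1-\delta^2$):
\begin{itemize}
\item for every $z$, $\int c(z,y)\,dy=\epsilon\phi(z)$, so that the covariate marginal is exactly Gaussian and the contamination fraction is exactly $\epsilon$ at every $z$;
\item for every $y$ and every $j\in[m]$, $\int c(z,y)\he_j(z)\,dz=-(1-\epsilon)\phi(y)\delta^j\he_j(y)$, which cancels the Hermite coefficients of the inlier part $(1-\epsilon)\phi(y)\phi(z;\delta y,1-\delta^2)$.
\end{itemize}
If you choose, for each $y$ separately, a nonnegative function of $z$ by a Hahn--Banach argument, you have no control over $\int c(z,y)\,dy$. ``Fixing the per-$z$ contamination masses'' afterwards, for instance by rescaling in $z$, destroys the per-$y$ moment identities. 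Your fallback of approximate moment matching does not help either. The exact $z$-marginal is a hard requirement for the alternative to lie in the model class, and the estimation-to-testing reduction depends on that.

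The missing idea is to make the contamination depend linearly on the target coefficients through fixed, $y$-independent functions of $z$. Write
$c(z,y)=\epsilon D(y)\phi(z)+\epsilon\phi(y)\sum_{i=1}^m a_i(y)g_i(z)$,
with $a_i(y)=-\frac{1-\epsilon}{\epsilon}\delta^i\he_i(y)$ and with $g_i$ satisfying $\int\he_j g_i=\indi\{i=j\}$ for $j\in[0:m]$.
\begin{itemize}
\item Since $\int\phi(y)\he_i(y)\,dy=0$ for $i\geq 1$, the perturbation integrates to zero in $y$. So the $z$-marginal is automatically $\epsilon\phi(z)$.
\item The moment identities hold for every $y$ by construction.
\item Duality is then used once, not per $y$, and over signed functions. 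It produces $g_i$ with $|g_i|\le (Cm)^{i/2}\phi$; this is where the estimate $\E[f(G)\he_i(G)]\le (Cm)^{i/2}\E|f(G)|$ for $\deg f\le m$ enters, via bulk concentration and hypercontractivity.
\item Nonnegativity is restored by taking $D(y)\ge\phi(y)\sum_i|a_i(y)|(Cm)^{i/2}$. Such a $D$ can be chosen to be a probability density as long as $\sum_i(Cm\delta^2/\epsilon^2)^{i/2}\le 1$, i.e.\ $m\lesssim\epsilon^2/\delta^2$.
\end{itemize}
Without this separable structure, or an equivalent duality argument carried out jointly in $(z,y)$, your construction does not produce a distribution in \Cref{model:adaptive}.
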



\Cref{thm:ThmFormalSQ} shows that, in order to
achieve the error bound $\|\wh{\beta}-\beta\|_2\leq
	\sigma\delta$, a ``polynomial-time'' SQ algorithm must use
$\dimension^{\Omega(\epsilon^2/\delta^2)}$ many effective ``samples''. 
In particular, if we restrict attention to algorithms whose sample complexity is polynomial in $\dimension$, this lower bound forces $\epsilon^2/\delta^2 = O(1)$, or equivalently $\delta \gtrsim \epsilon$.
The result can therefore be understood as a computational lower bound showing that a
polynomial-time algorithm cannot achieve an error bound
smaller than a constant multiple of $\sigma\epsilon$, thus providing
strong evidence that the error rate
(\ref{eq:med-reg-error-rate}) achieved by the median
regression estimator (\ref{eq:md-reg-hd}) cannot be improved given
the computational constraint.

The result of \Cref{thm:ThmFormalSQ} is proved by
establishing the hardness of the following hypothesis
testing problem:
\begin{align}
	\label{eq:reg-test-h0} H_0: \qquad&   P = \bbP_0:=\mathcal{N}(0,I_\dimension)\otimes R,                                     \\
	\label{eq:reg-test-h1} H_1: \qquad&   P\in \left\{\mathbb{P}_{\delta v,\sigma,Q^v}:v\in\mathcal{S}^{\dimension-1} \right\},
\end{align}
where $R$ is some distribution on $\mathbb{R}$, and $\mathbb{P}_{\delta v,\sigma,Q^v}$ 
is the distribution specified by \Cref{model:adaptive} with 
$\beta=\delta v$ and $Q=Q^v$ for some $v\in\mathcal{S}^{\dimension-1}$. 
Suppose $R=(1-\epsilon)\cN(0,1)+\epsilon D$ for some distribution $D$. 
Then (\ref{eq:reg-test-h0}) can also be regarded as an instance of \Cref{model:adaptive} with $\beta=0$ and $\sigma = 1$. 
 Therefore, the goal is to test whether $\beta=0$ under 
 (\ref{eq:reg-test-h0}) or $\|\beta\|_2=\delta$ under 
 the alternative (\ref{eq:reg-test-h1}).\footnote{In fact, our computational 
 lower bound holds for the (easier) Bayesian testing problem
  when $v$ is chosen uniformly at random from the unit sphere.}

\begin{Definition}[SQ Testing]
	Let $P_0$ be a distribution and let $\cP$ be a set of distributions over a common domain such that $P_0 \not\in \cP$.
	We say that an SQ algorithm $\cA$ solves the testing problem $H_0: P = P_0$ 
  versus $H_1: P \in \cP$ with $k$ queries and tolerance $\tau$ if
   $\cA \in \text{SQ}(k,\tau)$ and
	\begin{align*}
		\sup_{P \in \{P_0\} \cup \cP} \,\,\,\sup_{ \STAT_{P,\tau} \in \,\mathtt{Oracle}_{P,\tau} } \bbP\left( \cA (\STAT_{P,\tau})  \neq \indi\{P \neq P_0\}\right) \leq 0.1\,,
	\end{align*}
	where the probability is taken over the randomness of the algorithm.
\end{Definition}

Indeed, any SQ algorithm that solves the estimation problem implies an
 SQ test that solves the testing problem (\ref{eq:reg-test-h0})--(\ref{eq:reg-test-h1}).

\begin{Lemma}[Estimation Implies Testing]
\label{lem:est-implies-testing}
Suppose there is an SQ algorithm that solves linear regression under \Cref{model:adaptive} with $k$ 
  queries, tolerance $\tau$, and
  error $\delta/4$.
Then there is an SQ test that solves the testing problem in \eqref{eq:reg-test-h0}-\eqref{eq:reg-test-h1} for the same $\delta$
with $k$ queries and tolerance $\tau$ as long as $\sigma\in[1/2,1]$ and $R = (1 -\epsilon)\cN(0,1) +  \epsilon D$ for some distribution $D$.
\end{Lemma}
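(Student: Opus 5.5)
The test is built directly from the estimator. Let $\cA$ be the SQ algorithm that solves linear regression under \Cref{model:adaptive} with error $\delta/4$. The test $\cA'$ runs $\cA$ on the given oracle, using the same queries and the same tolerance, and obtains an output $\wh{\beta}$. It then returns $0$ if $\|\wh{\beta}\|_2 < \delta/2$ and $1$ otherwise. No additional queries are made, so $\cA'\in\text{SQ}(k,\tau)$ with the same $k$ and $\tau$.

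\emph{Null hypothesis.} We check that $\bbP_0$ is a valid instance of the estimation problem. Since $R=(1-\epsilon)\cN(0,1)+\epsilon D$, the distribution $\bbP_0=\cN(0,I_\dimension)\otimes R$ has $X\sim\cN(0,I_\dimension)$ and
\[
y\mid X\sim(1-\epsilon)\cN(X^\top 0,1)+\epsilon D.
\]
This is \Cref{model:adaptive} with $\beta=0$, $\sigma=1$, and $Q_X\equiv D$, so $\bbP_0\in\cD_{0,1,\epsilon}$. Here $\|\beta\|_2=0\le1$ and $\sigma=1\in[1/2,1]$, so the guarantee of $\cA$ applies to every oracle in $\mathtt{Oracle}_{\bbP_0,\tau}$. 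Hence, with probability at least $0.9$,
\[
\|\wh{\beta}\|_2\le \sigma\delta/4=\delta/4<\delta/2,
\]
and the test outputs $0$.

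\emph{Alternative hypothesis.} Each $\mathbb{P}_{\delta v,\sigma,Q^v}$ is in $\cD_{\delta v,\sigma,\epsilon}$ by definition, with $\sigma\in[1/2,1]$ by assumption. For the norm constraint we need $\|\delta v\|_2=\delta\le 1$. This holds in the regime of interest, since $\delta\lesssim\epsilon<1/2$. If necessary, we add the standing assumption $\delta\le1$, which is all that \Cref{thm:ThmFormalSQ} needs. The guarantee then gives, with probability at least $0.9$,
\[
\|\wh{\beta}-\delta v\|_2\le\sigma\delta/4\le\delta/4,
\]
so $\|\wh{\beta}\|_2\ge \delta-\delta/4=3\delta/4\ge\delta/2$ and the test outputs $1$.

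In both cases the error probability is at most $0.1$, uniformly over all oracles and all members of $H_1$. This is exactly the definition of solving the testing problem.

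\emph{Where the care is needed.} The argument has no real obstacle. What must be checked is that the null $\bbP_0$ is genuinely a member of the model class, which is why $R$ is required to have the form $(1-\epsilon)\cN(0,1)+\epsilon D$. We also need the error threshold $\sigma\delta/4$ to leave a margin below $\delta/2$ under the null and above it under the alternative for every $\sigma\in[1/2,1]$. Both sides hold because $\sigma\le1$.
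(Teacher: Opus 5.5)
Your proof is correct and follows the paper's argument: run the estimator and threshold $\|\wh{\beta}\|_2$ (the paper at $\delta/4$, you at $\delta/2$), using that $\bbP_0$ is an instance of \Cref{model:adaptive} with $\beta=0$, $\sigma=1$, and that $\sigma\le 1$ leaves a margin under the alternative. Your explicit check that $\|\delta v\|_2=\delta\le 1$ is a detail the paper leaves implicit; in the paper's construction it holds automatically, because $\sigma^2=1-\delta^2\ge 1/4$ forces $\delta\le\sqrt{3}/2$.
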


Thus, our goal is to construct the parameter $\sigma$ and
the distributions $R$ and $Q$ such that the testing problem
\eqref{eq:reg-test-h0}--\eqref{eq:reg-test-h1} is hard for SQ algorithms.

\subsection{Preliminaries of SQ Framework}

A standard benchmark problem used to establish computational hardness is called Non-Gaussian Component Analysis (NGCA). The goal of NGCA is to test whether a high-dimensional distribution has a one-dimensional direction whose marginal distribution is different from standard Gaussian. We first give a definition of the high-dimensional hidden direction distribution.

\begin{Definition}[High-Dimensional Hidden Direction Distribution] \label{def:high-dim-distribution}
For a unit vector $v \in \mathcal{S}^{\dimension-1}$ and a distribution $A$ on the real line with probability density function $A(\cdot)$, define $P^{A}_v$ to be a distribution over $\R^\dimension$, where $P^{A}_v$ is the product distribution whose orthogonal projection onto the direction of $v$ is $A$, 
and onto the subspace perpendicular to $v$ is the standard $(\dimension{-1})$-dimensional normal distribution. 
That is, $P^{A}_v(X) := A(v^\top X) \phi_{v^{\bot}}(X)$, where $\phi_{v^{\bot}}(X) = \exp\left(-\|X - (v^\top X)v\|_2^2/2\right)/(2\pi)^{(\dimension-1)/2}$.
\end{Definition}

The NGCA refers to the following hypothesis testing problem:
\begin{eqnarray*}
\label{eq:ngca-test0} H_0: && P=\mathcal{N}(0,I_\dimension), \\
\label{eq:ngca-test1} H_1: && P\in\left\{P_v^A: v\in\mathcal{S}^{\dimension-1}\right\}.
\end{eqnarray*}
NGCA was originally proposed in \citep{blanchard2006search}, and its computational hardness for SQ algorithms was established in \cite{DiaKS17}.
In particular,  \cite{DiaKS17} showed that if $A$ matches $m$ moments with $\cN(0,1)$ and $\chi^2(A, \cN(0,1))$ is finite, then all SQ algorithms that solve the testing problem need either $2^{p^{\Theta(1)}}$ many queries or need small tolerance $\tau = O(\dimension^{-\Omega(m)}) \sqrt{\chi^2(A,\cN(0,1))}$.
Due to the generality afforded by the choice of $A$,
NGCA has been used to show computational lower bounds for 
many high-dimensional statistical tasks; see \cite[Chapter 8]{DiaKan22-book}.

To connect our regression problem to NGCA, we observe that the Gaussian linear model (\ref{eq:glm}) can be equivalently defined by the following sampling process,
\begin{equation}
y \sim \cN(0,\sigma^2+\|\beta\|_2^2)\quad\text{and}\quad X\mid y\sim \cN\left(\frac{y}{\sigma^2+\|\beta\|_2^2}\beta, I_p-\frac{1}{\sigma^2+\|\beta\|_2^2}\beta\beta^\top\right).
\end{equation}
In other words, the conditional distribution of $X$ given $y$ is an instance of the high-dimensional hidden direction distribution with direction $v=\beta/\|\beta\|_2$ and non-Gaussian component $A=\cN\left(\frac{\|\beta\|_2}{\sigma^2+\|\beta\|_2^2}y, \frac{\sigma^2}{\sigma^2+\|\beta\|_2^2}\right)$. More generally, with the presence of the contamination component as in \Cref{model:adaptive}, we can still write the conditional distribution of $X$ given $y$ as a high-dimensional hidden direction distribution as long as the contamination distribution $Q_X$ depends on $X$ only through $X^\top\beta$.
However, since we are working with the joint distribution of $(X,y)$ in the regression problem, it is necessary  to extend the NGCA to the following conditional version.

\begin{Definition}[Conditional NGCA]\label{def:ngca-lin-regr}
Let $\mathcal{A}=\{A_y\}_{y\in\mathbb{R}}$ be a family of univariate distributions, and let $R$ be a univariate distribution. Consider the testing problem given access to a distribution $P$ on $\R^\dimension\times \R$:
\begin{eqnarray}
\label{eq:con-ngca-test0} H_0: && P = \mathcal{N}(0,I_\dimension)\otimes R, \\
\label{eq:con-ngca-test1} H_1: && P\in \left\{\mathbb{P}_{v,R}^{\mathcal{A}}: v\in\mathcal{S}^{\dimension-1}\right\},
\end{eqnarray}
where $\mathbb{P}_{v,R}^{\mathcal{A}}$ stands for a distribution of $(X,y)$ with sampling process given by $y\sim R$ and $X\mid y\sim P_v^{A_y}$.
\end{Definition}

Building on \cite{DiaKS17}, the conditional NGCA was first introduced by \cite{DiaKS19} to establish the computational hardness of (outlier)-robust linear regression (in the Huber contamination model).
Like the NGCA, the conditional NGCA is hard when the distributions $\{A_y\}_{y\in\R}$ match $m$ moments with standard Gaussian for all $y\in\R$, in which case any SQ algorithm solving\footnote{The notion of success of an SQ test is defined \emph{mutatis mutandis} as in \eqref{eq:reg-test-h0}-\eqref{eq:reg-test-h1} by substituting the appropriate $H_0$ and $H_1$.} conditional NGCA either needs (roughly) $2^{\dimension^{\Omega(1)}}$ many queries or at least a single query with tolerance at most (roughly) $\dimension^{-\Omega(m)}$. We state this result as the following lemma; for details, see \cite[Lemma 5.7]{DiaKPPS21}. 

\begin{Lemma}[SQ hardness of Conditional NGCA under matching moments~\cite{DiaKS19,}]
\label{lem:sq-hardness-conditional-ngca}
Consider the testing problem in Definition \ref{def:ngca-lin-regr} and let $m \in \N$.
Suppose that for every $y \in \R$, the distribution $A_y$ matches $m$ moments with $\cN(0,1)$.
Then, every SQ algorithm that solves the testing problem with $k$ queries and tolerance $\tau$ must satisfy either
\begin{itemize}
    \item (large number of queries)  $k \geq \frac{2^{\Omega(\dimension^{\Omega(1)})}}{\dimension^{(m+1)/4}}$, or
    \item (small tolerance)  $\tau \le O(1)\frac{ \sqrt{\E_{y \sim R}[\chi^2(A_y,\cN(0,1))]}}{\dimension^{(m+1)/8}}  $.
\end{itemize}
\end{Lemma}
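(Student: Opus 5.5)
The plan is to follow the standard statistical-dimension argument of \cite{FelGRVX17,DiaKS17}, lifted to the conditional setting as in \cite{DiaKS19}. The generic SQ lower bound is the following. Let $P_0$ be a reference distribution and $\{P_1,\dots,P_N\}\subseteq\cP$. Write $\chi_{P_0}(P_i,P_j)=\int \frac{dP_i\,dP_j}{dP_0}-1$ for the pairwise correlation. Suppose that $|\chi_{P_0}(P_i,P_j)|\le\gamma$ for all $i\neq j$ and $\chi_{P_0}(P_i,P_i)\le\beta$ for all $i$. Then every SQ test distinguishing $P_0$ from $\cP$ either uses at least $N\gamma'/(\beta-\gamma)$ queries or has tolerance at most $\sqrt{2\gamma'}$, for any $\gamma'>0$. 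I take $P_0=\cN(0,I_\dimension)\otimes R$ and $P_i=\mathbb{P}^{\mathcal{A}}_{v_i,R}$ for a well-chosen family of directions $v_1,\dots,v_N$. The proof then reduces to computing $\beta$ and $\gamma$.

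\textbf{Step 1 (reduction to a fixed $y$).} Since all hypotheses share the $y$-marginal $R$ and $X\mid y\sim P_v^{A_y}$, the likelihood ratio factorizes conditionally on $y$. This gives
$$\chi_{P_0}\bigl(\mathbb{P}^{\mathcal{A}}_{u,R},\mathbb{P}^{\mathcal{A}}_{v,R}\bigr)=\E_{y\sim R}\Bigl[\chi_{\cN(0,I_\dimension)}\bigl(P_u^{A_y},P_v^{A_y}\bigr)\Bigr].$$
Here I use Tonelli to swap the integrals, which is legitimate once $\E_y\chi^2(A_y,\cN(0,1))<\infty$; otherwise the tolerance bound is vacuous. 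In particular $\beta=\E_{y\sim R}[\chi^2(A_y,\cN(0,1))]$.

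\textbf{Step 2 (correlation bound for NGCA).} I invoke the key lemma of \cite{DiaKS17}. If a univariate $A$ matches its first $m$ moments with $\cN(0,1)$, then for unit vectors $u,v$,
$$\bigl|\chi_{\cN(0,I)}(P_u^A,P_v^A)\bigr|\le |u^\top v|^{m+1}\chi^2(A,\cN(0,1)).$$
The proof expands $A/\phi$ in normalized Hermite polynomials, $A(x)/\phi(x)=\sum_k a_k h_k(x)$, where $a_k=0$ for $1\le k\le m$ by moment matching. Using $\E_{X\sim\cN(0,I)}[h_k(u^\top X)h_l(v^\top X)]=\delta_{kl}(u^\top v)^k$, one gets $\chi=\sum_{k>m}a_k^2(u^\top v)^k$, which yields the bound. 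Applying this pointwise in $y$ and averaging via Step 1 gives
$$|\chi_{P_0}(P_u,P_v)|\le |u^\top v|^{m+1}\beta.$$

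\textbf{Step 3 (packing and parameter choice).} By a standard probabilistic argument there exist $N=2^{\Omega(\dimension^{c})}$ unit vectors with pairwise $|v_i^\top v_j|\le O(\dimension^{c-1/2})$; I take $c=1/4$, so $|v_i^\top v_j|\le O(\dimension^{-1/4})$. Then $\gamma\le O(\dimension^{-(m+1)/4})\beta$. Now set $\gamma'=\gamma$. The tolerance alternative becomes
$$\tau\le\sqrt{2\gamma}=O(1)\,\sqrt{\beta}\,\dimension^{-(m+1)/8},$$
which is the stated bound. The query alternative gives $k\ge N\gamma/\beta\gtrsim 2^{\Omega(\dimension^{\Omega(1)})}\dimension^{-(m+1)/4}$. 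More carefully, the factor $\dimension^{-(m+1)/4}$ appears because $\gamma'/\beta$ scales like $\dimension^{-(m+1)/4}$; I would handle the case $\beta$ close to $\gamma$ by noting it only improves the bound.

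\textbf{Main obstacle.} I expect Step 2 made uniform over $y$ to be the delicate part. It requires the Hermite expansion to converge in $L^2(\phi)$ for $R$-almost every $y$, justified by finiteness of $\chi^2(A_y,\cN(0,1))$. It also requires justifying the interchange of $\E_y$ with the orthogonality computation. I would verify this by dominated convergence, using $|u^\top v|\le1$ and $\sum_k a_k(y)^2=\chi^2(A_y,\cN(0,1))$.
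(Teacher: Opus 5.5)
Your proposal is correct and follows the standard argument behind this lemma. The paper does not reprove it; it imports it from \cite{DiaKS19} and \cite[Lemma 5.7]{DiaKPPS21}, whose proof is the same combination you use: the pairwise-correlation SQ bound of \cite{FelGRVX17} with $\gamma'=\gamma$, the conditional factorization $\chi_{P_0}(P_u,P_v)=\E_{y\sim R}[\chi_{\cN(0,I_\dimension)}(P_u^{A_y},P_v^{A_y})]$, the Hermite bound $|u^\top v|^{m+1}\chi^2(A_y,\cN(0,1))$ of \cite{DiaKS17}, and a near-orthogonal packing with $c=1/4$.

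One small fix is needed in Step 3. The packing bound gets raised to the power $m+1$, so you should take vectors with $|v_i^\top v_j|\le \dimension^{-1/4}$ exactly (random unit vectors achieve this with $N=2^{\Omega(\sqrt{\dimension})}$), not merely $O(\dimension^{-1/4})$. Otherwise a factor $C^{(m+1)/2}$ enters the tolerance bound, and that factor is not $O(1)$ uniformly in $m$.
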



With \Cref{lem:sq-hardness-conditional-ngca}, it suffices to construct $\sigma,R,Q$ so that the testing problem (\ref{eq:reg-test-h0})-(\ref{eq:reg-test-h1}) is an instance of the conditional NGCA given in \Cref{def:ngca-lin-regr}. To this end, we will need Hermite polynomials as part of the technical tools.

\paragraph{Gaussian Hermite Analysis}

For a $k \in \N$, we use $\he_k: \R \to \R$ to denote the $k$-th normalized probabilist's polynomial, which is a degree-$k$ polynomial with definition
$$\he_k(x) := \frac{1}{\sqrt{k!}} (-1)^k e^{x^2/2} \frac{d^k}{dx^k} e^{-x^2/2}.$$
We shall use the following facts about Hermite polynomials.
\begin{Fact}[See, for example, \cite{Szego89,Odo14}]
    \label{fact:Hermite}
    Let $G \sim \cN(0,1)$. 
    Then Hermite polynomials satisfy the following:
    \begin{enumerate}
        \item Hermite polynomials $\{\he_0,\he_1,\dots,\he_k\}$ form a basis of polynomials of degree up to $k$.
        \item For all $i,j \in \{0\} \cup \N$: $\E[\he_i(G)\he_j(G)] = \indi\{i=j\}$.
        \item For any $i  \in \N$, $\rho \in (0,1)$ and $\mu \in \R$, we have that $\E[\he_i( \rho \mu + \sqrt{1 - \rho^2} G)] = \rho^i \he_i(\mu)$.
        \item There exists a constant $C>0$ such that for all $x\in \R$, $i \in \N$, $|\he_i(x)| \leq (C(1 + |x|))^i$.
    \end{enumerate}
\end{Fact}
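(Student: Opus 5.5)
The four items are classical, and I would derive all of them from the unnormalized probabilist's Hermite polynomials $H_k(x):=(-1)^k e^{x^2/2}\frac{d^k}{dx^k}e^{-x^2/2}=\sqrt{k!}\,\he_k(x)$ and their generating function
\begin{equation*}
e^{tx-t^2/2}=\sum_{k\geq 0}\frac{t^k}{k!}H_k(x).
\end{equation*}
This identity follows from Taylor expanding $e^{-(x-t)^2/2}$ in $t$ around $t=0$ and multiplying by $e^{x^2/2}$. Expanding $e^{tx}e^{-t^2/2}$ as a product of series and matching coefficients of $t^k$ gives the explicit formula
\begin{equation*}
H_k(x)=k!\sum_{j=0}^{\lfloor k/2\rfloor}\frac{(-1)^j x^{k-2j}}{j!\,(k-2j)!\,2^j}.
\end{equation*}
In particular $H_k$ is monic of degree $k$. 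Hence $\{\he_0,\dots,\he_k\}$ is related to $\{1,x,\dots,x^k\}$ by an invertible triangular change of basis, which proves item 1.

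For item 2, I would take $G\sim\cN(0,1)$ and compute $\E[e^{sG-s^2/2}e^{tG-t^2/2}]=e^{st}$ by completing the square. Expanding both sides with the generating function and comparing coefficients of $s^it^j$ gives $\E[H_i(G)H_j(G)]=i!\,\indi\{i=j\}$, which is exactly orthonormality of the $\he_i$. An alternative is integration by parts in the Rodrigues formula, but the generating-function route is shorter.

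For item 3, I would compute the expectation of the generating function at the shifted point:
\begin{equation*}
\E\bigl[e^{t(\rho\mu+\sqrt{1-\rho^2}G)-t^2/2}\bigr]=e^{t\rho\mu+(1-\rho^2)t^2/2-t^2/2}=e^{(\rho t)\mu-(\rho t)^2/2}=\sum_{k\geq 0}\frac{(\rho t)^k}{k!}H_k(\mu).
\end{equation*}
Comparing coefficients of $t^k$ gives $\E[H_k(\rho\mu+\sqrt{1-\rho^2}G)]=\rho^kH_k(\mu)$. Exchanging expectation and summation is justified by dominated convergence, since the relevant series converge absolutely and the Gaussian has all exponential moments.

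Item 4 is the only step requiring a quantitative estimate, and I expect it to be the main (mild) obstacle. From the explicit formula, using $|x|^{k-2j}\leq(1+|x|)^k$, I get $|H_k(x)|\leq(1+|x|)^k\,T_k$, where
\begin{equation*}
T_k=\sum_j\frac{k!}{j!\,(k-2j)!\,2^j}=\sum_j\binom{k}{2j}(2j-1)!!
\end{equation*}
counts partial matchings on $k$ points. Next I would use $(2j-1)!!\leq\sqrt{(2j)!}\leq\sqrt{k!}$ and $\sum_j\binom{k}{2j}\leq 2^k$ to obtain $T_k\leq 2^k\sqrt{k!}$. Dividing by $\sqrt{k!}$ then yields $|\he_k(x)|\leq(2(1+|x|))^k$, i.e., item 4 with $C=2$. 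The one inequality to check carefully is $(2j-1)!!\leq\sqrt{(2j)!}$. It holds because $((2j-1)!!)^2\leq(2j-1)!!\,(2j)!!=(2j)!$.
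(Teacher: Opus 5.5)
Your proposal is correct. For item 4 you take the same route as the paper: expand $\he_k$ explicitly and control the coefficient ratio $(2j-1)!!/\sqrt{k!}$. The difference is how you bound that ratio. The paper locates the maximum at $2j\approx k$ and applies Stirling's formula. You use the elementary chain $(2j-1)!!\le\sqrt{(2j)!}\le\sqrt{k!}$, which is cleaner and avoids asymptotics.

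You can also avoid splitting off $\sum_j\binom{k}{2j}\le 2^k$. If you keep the sum intact as $\sum_j\binom{k}{2j}|x|^{k-2j}\le(1+|x|)^k$, your argument gives $|\he_k(x)|\le(1+|x|)^k$, i.e.\ $C=1$. That matches, up to constants, what the paper's Stirling argument yields.

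For items 1--3 the paper gives no argument and simply cites the literature. Your generating-function derivations make the fact self-contained: the monic triangular change of basis, $\E[e^{sG-s^2/2}e^{tG-t^2/2}]=e^{st}$, and $\E[e^{tY-t^2/2}]=e^{\rho t\mu-(\rho t)^2/2}$ for $Y=\rho\mu+\sqrt{1-\rho^2}G$. For the interchange of sum and expectation, you can name the dominating function explicitly: $\sum_k\frac{|t|^k}{k!}|H_k(y)|\le e^{|t||y|+t^2/2}$, which follows from your explicit formula and is integrable under any Gaussian.
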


\subsection{Construction of a Conditional NGCA Instance}\label{sec:con-NGCA}

In this section, we show that the testing problem (\ref{eq:reg-test-h0})-(\ref{eq:reg-test-h1}) is an instance of conditional NGCA (\Cref{def:ngca-lin-regr}, where each $A_y$ matches $m = \Theta(\epsilon^2/\delta^2)$ many moments with $\cN(0,1)$.
We take $\sigma^2=1-\delta^2$ and $Q^v_X=E_{v^\top X}$ in (\ref{eq:reg-test-h1}) for some distribution $E_{v^\top X}$ that only depends on $X$ through the one-dimensional projection $v^\top X$. 
These choices are motivated by the fact that 
the joint density function of $(X,y)$ given some $v$ under (\ref{eq:reg-test-h1}) is
\begin{equation}
\mathrm{pdf}(X,y)=\phi_{v^{\bot}}(X)\phi(x')\left((1-\epsilon)\phi(y;\delta x',1-\delta^2)+\epsilon E_{x'}(y)\right), \label{eq:joint-den-ngca}
\end{equation}
where $x'=v^\top X$ and (with slight abuse of notation) we write $E_{x'}(\cdot)$ for the density function of the distribution $E_{x'}$. The joint distribution (\ref{eq:joint-den-ngca}) implies that the marginal of $y$ is given by the density function
\begin{equation}
R(y) = (1-\epsilon)\int\phi(x')\phi(y;\delta x',1-\delta^2)dx'+\epsilon \int\phi(x')E_{x'}(y)dx'. \label{eq:ngca-R}
\end{equation}
Note that this (\ref{eq:ngca-R}) satisfies the condition of \Cref{lem:est-implies-testing}
since $\int\phi(x')\phi(y;\delta x',1-\delta^2)dx'=\phi(y)$, and thus we can use (\ref{eq:ngca-R}) as the distribution $R$ in  (\ref{eq:reg-test-h0}).
Moreover, the factorization of (\ref{eq:joint-den-ngca}) implies that the distribution of $X\mid y$, in the subspace orthogonal to $v$ is the standard multivariate $(\dimension-1)$-dimensional Gaussian, while along the $v$ direction, the conditional distribution of $x'=v^\top X$ given $y$ has density function
\begin{equation}
A_y(x') = \frac{(1-\epsilon)\phi(x')\phi(y;\delta x',1-\delta^2)+\epsilon \phi(x')E_{x'}(y)}{R(y)}. \label{eq:ngca-Ayx'}
\end{equation}
In other words, we have $X\mid y\sim P_v^{A_y}$ under (\ref{eq:joint-den-ngca}), and thus the testing problem (\ref{eq:reg-test-h0})-(\ref{eq:reg-test-h1}) is an instance of conditional NGCA with $\{A_y\}_{y\in\mathbb{R}}$ and $R$ given by (\ref{eq:ngca-Ayx'}) and (\ref{eq:ngca-R}). By \Cref{lem:sq-hardness-conditional-ngca}, it suffices to construct the conditional distribution $E_{x'}$ so that the induced $A_y$ in (\ref{eq:ngca-Ayx'}) matches moments of $\mathcal{N}(0,1)$ and its chi-squared divergence from $\mathcal{N}(0,1)$ is also bounded.

From now on, we will drop the prime symbol on $x'$ and just write $x$ for notational simplicity whenever the context is clear.

\paragraph{An Alternative Factorization}

The component $\phi(x)E_x(y)$ in the numerator of (\ref{eq:ngca-Ayx'}) stands for the joint distribution of $(x,y)$ (recall that $x$ stands for $v^{\top}X$) when the data is drawn from the contamination distribution. Instead of directly constructing $E_x$, we write
\begin{equation}
\phi(x)E_x(y) = D(y)D_y(x), \label{eq:diff-factor-contam}
\end{equation}
and we will construct $D(\cdot)$, the marginal density of $y$, and $D_y(\cdot)$, the conditional density of $x$ given $y$. We will show there exists a construction such that
\begin{enumerate}[label=(\Roman*), ref=\Roman*]
    \item \label[cond]{item:intro-marginals}
    The marginal of $x$ under $D(y)D_y(x)$ is $\cN(0,1)$.
    \item \label[cond]{item:intro-moments-Ay} For each $y \in \R$, the induced $A_y$ matches $m = \Theta(\epsilon^2/\delta^2)$ moments with $\cN(0,1)$.
\end{enumerate}
To satisfy the first condition, we need
$$\int D(y)D_y(x)dy =\phi(x)\text{ for all }x\in\mathbb{R}.$$
The simplest choice to make here is to take
\begin{equation}
D_y(x) = \phi(x) + f_y(x), \label{eq:D=phi+f}
\end{equation}
with the $f_y:\R \to \R$ being some tiny fluctuations, which are small enough so that the resulting (conditional) distribution is valid (i.e., $|f_y(x)|\leq \phi(x)$ and $\int f_y(x)dx = 0)$, but flexible enough to satisfy the second criterion \Cref{item:intro-moments-Ay}.
Under this choice, the criterion \Cref{item:intro-marginals} is equivalent to the following conditions on $\{f_y\}_{y \in \R}$:
\begin{enumerate}[label=(I.\alph*), ref=I.\alph*]
    \item \label[cond]{item:f-mean-zero-intro} $\int f_y(x)dx = 0$ for all $y \in \R$.
    \item \label[cond]{item:f-D-zero-intro} $\int D(y) f_y(x)dy = 0$ for all $y \in \R$.
    \item \label[cond]{item:f-bound} $|f_y(x)|\leq \phi(x)$ for all $x,y \in \R$.
\end{enumerate}
The first condition \Cref{item:f-mean-zero-intro} demands that the average of $f_y$ should be zero.
A natural choice would be to take $f_y$ to be a linear combination of mean-zero functions of $x$.
This suggests we should take $f_y(x) = \sum_{i \in I} b_i(y) g_i(x)$, where $\int g_i(x)dx = 0$ for all $i\in I$.
The second condition \Cref{item:f-D-zero-intro} requires that $\int f_y(x)D(y) dy = 0$.
This suggests that $f_y(x)D(y)$, as a function of $y$, should be a linear combination of mean-zero functions of $y$. 
Applying this suggestion to the aforementioned choice of $f_y(x) D(y) = \sum_{i \in I} b_i(y) D(y) g_i(x)$,
 we should take $b_i(y) D(y)$ to be a mean-zero function of $y$, for example, $\phi(y)$ multiplied by a polynomial $a_i(y)$ that has mean zero under $\cN(0,1)$.
With these two choices, the candidate fluctuation has the following form:
\begin{align}
\label{item:f-candidate-form-intro}
f_y(x) = \sum_{i \in I} \frac{\phi(y)}{D(y)} a_i(y) g_i(x)\,,
\end{align}
where for all $i \in I$: $\int a_i(y) \phi(y)dy = 0$ and $\int g_i(x) dx = 0$.
The choice of the polynomials $a_i$ and functions $g_i$ would come from the second criterion \Cref{item:intro-moments-Ay} above.

\paragraph{Moment Matching Condition}

The moment condition (\ref{item:intro-moments-Ay}) imposes that $A_y$ matches $m$ moments with $\cN(0,1)$. By \Cref{fact:Hermite}, this is equivalent to
\begin{equation}
\int \he_j(x)A_y(x)dx=\int \he_j(x)\phi(x)dx=0, \label{eq:matching-hermite}
\end{equation}
for all $j\in[m]$ and $y\in\mathbb{R}$. To check (\ref{eq:matching-hermite}), we note that the formula (\ref{eq:ngca-Ayx'}) can also be written as
\begin{equation}
A_y(x)=\frac{(1-\epsilon)\phi(y)\phi(x;\delta y,1-\delta^2)+\epsilon D(y)D_y(x)}{(1-\epsilon)\phi(y)+\epsilon D(y)}, \label{eq:Ay-alt-form}
\end{equation}
where we have used the identity $\phi(x)\phi(y;\delta x,1-\delta^2)=\phi(y)\phi(x;\delta y,1-\delta^2)$ and (\ref{eq:diff-factor-contam}). With $D_y(x) = \phi(x) + f_y(x)$, the condition (\ref{eq:matching-hermite}) is reduced to
\begin{equation}
\int f_y(x)\he_j(x)dx=-\frac{(1-\epsilon)\phi(y)}{\epsilon D(y)}\int \he_j(x)\phi(x;\delta y,1-\delta^2)dx=-\frac{(1-\epsilon)\phi(y)\delta^j\he_j(y)}{\epsilon D(y)},\label{eq:matching-fy-hermite}
\end{equation}
where the last identity follows by \Cref{fact:Hermite}. On the other hand, the integral $\int f_y(x)\he_j(x)dx$ under the candidate form of $f_y(x)$ in (\ref{item:f-candidate-form-intro}) is
$$\sum_{i \in I} \frac{\phi(y)}{D(y)} a_i(y) \int\he_j(x)g_i(x)dx.$$
For this to be equal to (\ref{eq:matching-fy-hermite}) for all $j\in[m]$ and $y\in\mathbb{R}$, we set $I$, $a_i(y)$ and $g_i(y)$ according to $I=[m]$, $a_i(y)=-\frac{(1-\epsilon)\delta^i\he_i(y)}{\epsilon}$ and $\int\he_j(x)g_i(x)dx=\indi\{i=j\}$. Observe that $a_i(y)$ does have zero expectation under the standard Gaussian measure (as required above). Thus, our final choice of $f_y$ has the following form:
\begin{equation}
f_y(x) = \frac{\phi(y)}{D(y)}\sum_{i=1}^ma_i(y)g_i(x), \label{eq:fy-final-form}
\end{equation}
where $g_i$ satisfies $\int\he_j(x)g_i(x)dx=\indi\{i=j\}$ for all $j\in[m]$ and $\int g_i(x)dx = 0$.  Observe that such $g_i$'s imply \Cref{item:f-mean-zero-intro,,item:f-D-zero-intro,,item:intro-moments-Ay}.
Restating these conditions on $g_i$'s more compactly and accounting for the remaining constraint of \Cref{item:f-bound} in \Cref{item:g-bound-intro} (with justification to follow shortly), the criteria \Cref{item:intro-moments-Ay,item:intro-marginals} are 
satisfied if there exist functions $\{g_i\}_{i=1}^m$ such that
\begin{enumerate}[label=(G.\Roman*),ref=G.\Roman*]
    \item \label[cond]{item:g-hermite-moments-intro}For all $i \in [m]$ and $j \in [0:m]$, $\int \he_j(x)g_i(x)dx = \indi\{i=j\}$.
    \item \label[cond]{item:g-bound-intro} It holds that $\sum_{i=1}^m \sup_x \frac{|g_i(x)|}{\phi(x)} \frac{\delta^i}{\epsilon} \leq 1$.
\end{enumerate}
The second condition here, \Cref{item:g-bound-intro}, is imposed to satisfy the condition \Cref{item:f-bound} that $|f_y(x)| \leq \phi(x)$. Indeed, since the condition \Cref{item:f-bound} is implied by
\begin{equation}
\sum_{i=1}^m\phi(y)|a_i(y)|\sup_x\frac{|g_i(x)|}{\phi(x)}\leq D(y)\text{ for all }y\in\mathbb{R},\label{eq:D-pointwise-lb}
\end{equation}
we could take $D(y)$ that satisfies (\ref{eq:D-pointwise-lb}). Such a $D(y)$ can be a valid density function as  
long as $\kappa:= \sum_{i=1}^m \sup_x \frac{|g_i(x)|}{\phi(x)} \cdot \int \phi(y)|a_i(y)|dy \leq 1$, and observing that $\int \phi(y)|a_i(y)|dy = \E[|a_i(G)|] = \frac{(1-\epsilon)}{\epsilon}\delta^i \E[|\he_i(G)|]\leq 
\frac{\delta^i}{\epsilon} \sqrt{\E[\he_i(G)^2]} \leq \frac{\delta^i}{\epsilon}$ gives us \Cref{item:g-bound-intro}.
To be precise, we take $D(y)$ to be 
\begin{align}
\label{eq:qprime-def}
D(y) = ( 1 -  \kappa) \phi(y) + \sum_{i=1}^m\phi(y)|a_i(y)|\sup_x\frac{|g_i(x)|}{\phi(x)} \,.
\end{align}

\paragraph{Existence of Appropriate $g_i$'s} Observe that achieving either \Cref{item:g-hermite-moments-intro} or \Cref{item:g-bound-intro} in isolation is rather easy; 
for example, \Cref{item:g-hermite-moments-intro} is satisfied by $g_i(x) = \he_i(x) \phi(x)$, but it does not satisfy \Cref{item:g-bound-intro} because $g_i(x)/\phi(x) = \he_i(x)$ is unbounded. 
We now show that both conditions can be achieved simultaneously.
\begin{Proposition}[Existence of $g_i$'s using LP Duality]
\label{prop:existence-g-intro}
There exists a positive constant $C$ such that, 
for every $m \in \N$,
there exist measurable functions $g_1,\dots,g_m$ satisfying
\begin{itemize}
     \item For all $i \in [m]$ and $j \in [0:m]$, $\int \he_j(x)g_i(x)dx = \indi\{i=j\}$.

 \item For all $i \in [m]$, $\sup_x \frac{|g_i(x)|}{\phi(x)}\leq (Cm)^{i/2}$.
  \end{itemize} 
\end{Proposition}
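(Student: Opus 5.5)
The plan is to treat each $i\in[m]$ separately as a linear feasibility problem and settle it by LP (Hahn--Banach) duality. Writing $g_i=h_i\phi$ with $h_i\in L^\infty(\R)$, the first bullet becomes the finite set of linear constraints $\E[h_i(G)\he_j(G)]=\indi\{i=j\}$ for $j\in[0:m]$, with $G\sim\cN(0,1)$. The second bullet asks that $\|h_i\|_\infty\le (Cm)^{i/2}$. By the duality between $L^\infty(\phi)$ and $L^1(\phi)$, restricted to the finite-dimensional subspace $\mathcal{P}_m$ of polynomials of degree at most $m$, the minimal value of $\|h_i\|_\infty$ subject to these constraints equals
\[
\sup_{p\in\mathcal{P}_m,\,p\neq 0}\frac{|\E[p(G)\he_i(G)]|}{\E|p(G)|}.
\]
Concretely, the functional $p\mapsto \E[p\he_i]$ on $(\mathcal{P}_m,\|\cdot\|_{L^1(\phi)})$ extends with the same norm to $L^1(\phi)$, and that extension is represented by some $h_i\in L^\infty$. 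The minimum is attained, so measurability is automatic. The proposition therefore reduces to a purely polynomial inequality: for every $p=\sum_{j=0}^m\lambda_j\he_j$ and every $i\in[m]$,
\[
|\lambda_i|\le (Cm)^{i/2}\,\E|p(G)|.
\]

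To prove this inequality, I would split $\lambda_i=\E[p\he_i]$ into a bulk part and a tail part, cutting at $R=C_0\sqrt m$. For the bulk, Fact~\ref{fact:Hermite}(4) gives $\sup_{|x|\le R}|\he_i(x)|\le (C(1+C_0\sqrt m))^i\le (C'm)^{i/2}$. Hence the bulk contribution is at most $(C'm)^{i/2}\E|p(G)|$, which is exactly the target rate.

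The tail term $\E[|p(G)||\he_i(G)|\indi\{|G|>R\}]$ is the main obstacle. It must be bounded by a constant multiple of $\E|p(G)|$, uniformly over all degree-$m$ polynomials. My first attempt would be Cauchy--Schwarz, which gives the bound
\[
\E[p^2\indi\{|G|>R\}]^{1/2}\,\E[\he_i^2\indi\{|G|>R\}]^{1/2}.
\]
I would then use two facts about degree-$m$ polynomials under the Gaussian measure. First, the Gaussian $L^2$-mass of a degree-$m$ polynomial outside $[-C_0\sqrt m,C_0\sqrt m]$ is at most $e^{-cC_0^2 m}\E[p^2]$. This should follow from a Remez/Nikolskii-type comparison using weighted-polynomial bounds of Freud--Mhaskar type: $|p(x)|e^{-x^2/4}$ is essentially maximized inside $|x|\lesssim\sqrt m$. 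Second, the reverse moment bound $\E[p^2]^{1/2}\le e^{O(m)}\E|p|$ holds, by hypercontractivity or the same weighted-polynomial bounds. Choosing $C_0$ large makes the factor $e^{-cC_0^2m}$ overwhelm both $e^{O(m)}$ and $\E[\he_i^2\indi\{|G|>R\}]^{1/2}\le 1$, so the tail term is at most $\E|p(G)|$.

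Combining the bulk and tail estimates gives $|\lambda_i|\le ((C'm)^{i/2}+1)\E|p|\le (Cm)^{i/2}\E|p|$. The duality step then produces the required $g_i=h_i\phi$. The part I expect to need the most care is the tail-mass estimate together with the $L^2$-to-$L^1$ comparison with explicit $e^{O(m)}$ constants. If the Cauchy--Schwarz route loses too much, I would fall back on bounding $|p(x)|$ pointwise for $|x|>R$ directly by its weighted sup norm on $[-R,R]$, and then compare that sup norm to $\E|p|$.
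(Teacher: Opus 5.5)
Your proposal is correct and follows essentially the paper's route. Duality reduces the claim to $|\E[p(G)\he_i(G)]|\le (Cm)^{i/2}\,\E|p(G)|$ for all $p$ of degree at most $m$; this is \Cref{lem:sup-poly}, which the paper reaches via a separating hyperplane and the test function $-B_i\,\mathrm{sign}(f)$ rather than Hahn--Banach. The paper proves that inequality as you do: split at radius $\Theta(\sqrt m)$, bound the bulk with the Hermite growth bound, and bound the tail with Cauchy--Schwarz, Gaussian concentration, and the hypercontractive comparison $\E[p(G)^2]^{1/2}\le e^{O(m)}\E|p(G)|$. The only difference is that the paper applies the tail estimate to the degree-$2m$ product $p\he_i$ rather than to $p$ alone.

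You also do not need Freud--Mhaskar-type bounds for your tail-mass step. Cauchy--Schwarz and Gaussian hypercontractivity give, for $R\ge 1$,
\[
\E[p(G)^2\indi\{|G|>R\}]\le \E[p(G)^4]^{1/2}\,\mathbb{P}(|G|>R)^{1/2}\le 3^m e^{-R^2/4}\,\E[p(G)^2].
\]
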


First, observe that this proposition implies both  \Cref{item:g-hermite-moments-intro} and \Cref{item:g-bound-intro}: the first is trivial, and the second follows from the following simple calculation:
$\sum_{i=1}^m (Cm)^{i/2} \delta^i/\epsilon \leq \sum_{i=1}^{m}  (Cm \delta^2/\epsilon^2)^{i/2} \leq 1$, where the last inequality holds as long as $m \leq \frac{\epsilon^2}{4C \delta^2}$, and we will then use $m \leq \frac{\epsilon^2}{4C \delta^2}$ as a condition for $m$.
We provide the formal proof of \Cref{prop:existence-g-intro} in \Cref{sec:pf-SQ}, but present a proof sketch below:

\begin{proof}[Proof Sketch of \Cref{prop:existence-g-intro}]
We write $B_i=(Cm)^{i/2}$ and define $\mathcal{A}_i$ to be the set of all functions $r$ on $\mathbb{R}$ such that $\sup_x|r(x)|\leq B_i$.
By writing $g_i(x)=\phi(x)r_i(x)$, it suffices to show the existence of $r_i$ in $\mathcal{A}_i$ such that $\int \he_j(x)r_i(x)\phi(x)dx = \indi\{i=j\}$ for all $j \in [0:m]$. Consider the linear operator $T$ that maps each $r\in\mathcal{A}_i$ to a $(m+1)$-dimensional vector whose entries are given by $\{\int \he_{j}(x)r(x)\phi(x)dx\}_{j\in[0:m]}$. We then define $\mathcal{B}_i$ to be the set of all such projections, i.e., $B_i=\{T(r):r\in\mathcal{A}_i\}$. Our goal is to show that $e_{i}\in\mathbb{R}^{m+1}$, the vector with all entries zero expect the $i$-th coordinate being one (the coordinates are indexed by $[0:m]$), belongs to the set $\mathcal{B}_i$. First, it can be shown that $\cB_i$ is a compact convex set.
Hence, the hyperplane separation theorem implies that $e_{i}$ does belong to $\cB_i$ unless there is a vector $u \in \R^{m+1}$ such that $\min_{w \in \cB_i} u^\top w > u^\top e_{i}$.
Hence, it suffices to show that for all $u \in \R^{m+1}$, there exists a $w \in \cB_i$ (or an $r \in \cA_i$) such that  $ u^\top w = u^\top T(r) \leq u^\top e_{i}$.

For each $u=(u_0,u_1,\cdots,u_m)^\top\in\mathbb{R}^{m+1}$, 
there corresponds a polynomial $f(x)=\sum_{j=0}^mu_j\he_j(x)$ whose degree is at most $m$. It can be checked that $u^\top T(r)=\mathbb{E}[f(G)r(G)]$ 
and $u^\top e_{i}=\mathbb{E}[f(G)\he_i(G)]$ with $G\sim \cN(0,1)$. 
Thus, it suffices to show that for all degree-$m$ polynomial $f$, there exists an $r\in\mathcal{A}_i$ such that $\mathbb{E}[f(G)r(G)]\leq\mathbb{E}[f(G)\he_i(G)]$. 
To this end, we take $r=-B_i\text{sign}(f)$ (which does belong to $\cA_i$), 
and the condition becomes $-B_i\mathbb{E}[|f(G)|]\leq\mathbb{E}[f(G)\he_i(G)]$. 
Equivalently,
$$B_i \geq  \sup_{\text{deg}(f)\leq m; \,\,f \neq 0 }\frac{\mathbb{E}[f(G)\he_i(G)]}{\mathbb{E}[|f(G)|]}.$$
Using the hypercontractivity of Gaussian polynomials, we show in \Cref{lem:sup-poly} (stated in \Cref{sec:pf-SQ}) that the supremum above is bounded by $2\sup_{|x|\leq \sqrt{32m}}|\he_i(x)|$, which is at most $B_i=(Cm)^{i/2}$.
\end{proof}

\paragraph{Chi-Squared Bound}

With the construction of $A_y$ outlined above, we still need to bound its expected chi-squared divergence from $\cN(0,1)$ in order to apply the result of \Cref{lem:sq-hardness-conditional-ngca}. This is done by the following lemma.
\begin{Lemma}\label{lem:chi-squared-bd}
Consider $R=(1-\epsilon)\phi+\epsilon D$ and the conditional distribution $A_y$ in (\ref{eq:Ay-alt-form}) constructed with (\ref{eq:qprime-def}), (\ref{eq:D=phi+f}) and (\ref{eq:fy-final-form}) using functions $\{g_i\}_{i\in[m]}$ satisfying the two properties in \Cref{prop:existence-g-intro}. As long as $m \leq \frac{\epsilon^2}{4C \delta^2}$, there exists another constant $C'>0$ such that $\mathbb{E}_{y\sim R}\left[\chi^2(A_y,\cN(0,1))\right]\leq C'm$.
\end{Lemma}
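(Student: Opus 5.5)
We bound the $\chi^2$-divergence pointwise in $y$ through the Hermite expansion of the likelihood ratio $A_y/\phi$, and then average over $y\sim R$. Since $\{\he_j\}_{j\ge0}$ is an orthonormal basis of $L^2(\phi)$, we have
\[
\chi^2(A_y,\cN(0,1))=\sum_{j\geq1}\Big(\int \he_j(x)A_y(x)\,dx\Big)^2 .
\]
By construction the coefficients with $j\in[m]$ vanish; this is \Cref{item:intro-moments-Ay}, which follows from \Cref{item:g-hermite-moments-intro}. So only the terms with $j>m$ remain. Using the form (\ref{eq:Ay-alt-form}) and $D_y=\phi+f_y$, each coefficient splits as
\[
\int \he_j A_y=\frac{(1-\epsilon)\phi(y)\delta^j\he_j(y)+\epsilon D(y)\int \he_j f_y}{R(y)},
\]
where Fact~\ref{fact:Hermite}(3) gives the first term.

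The cleanest route avoids the tail coefficients of $f_y$ altogether. Write $A_y=\frac{(1-\epsilon)\phi(y)}{R(y)}\phi(\cdot;\delta y,1-\delta^2)+\frac{\epsilon D(y)}{R(y)}D_y$, which is a convex combination. Since $\chi^2(\cdot,\phi)$ is convex, it suffices to bound the two pieces separately.

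\textbf{Contaminated piece.} By \Cref{item:f-bound}, $|f_y|\le\phi$, so $D_y/\phi=1+f_y/\phi$ with $|f_y/\phi|\le1$. This gives $\chi^2(D_y,\phi)=\int f_y^2/\phi\le1$. A sharper estimate is available from \Cref{item:g-bound-intro} and (\ref{eq:D-pointwise-lb}): we have $|f_y/\phi|\le \frac{\phi(y)}{D(y)}\sum_i|a_i(y)|\sup|g_i/\phi|\le1$, but the crude bound of $1$ already suffices. The contaminated piece therefore contributes at most $\E_R[\epsilon D(y)/R(y)]\le 1$.

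\textbf{Clean piece.} This is $\chi^2(\cN(\delta y,1-\delta^2),\cN(0,1))=\frac{1}{\sqrt{1-\delta^4}}\exp\!\big(\frac{\delta^2y^2}{1+\delta^2}\big)-1$, obtained either by the standard Gaussian computation or from the Hermite sum $\sum_{j\ge1}\delta^{2j}\he_j(y)^2$ via Mehler's formula. Its weight is $w(y)=\frac{(1-\epsilon)\phi(y)}{R(y)}\le1$.

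The main obstacle is the clean piece, because we must average it against $R$ and not merely against $\phi$. Under $R$ the weight cancels the density: $\E_{y\sim R}[w(y)\chi^2_{\rm clean}(y)]=(1-\epsilon)\E_{G\sim\cN(0,1)}[\chi^2_{\rm clean}(G)]$. This Gaussian average equals $\sum_{j\ge1}\delta^{2j}=\delta^2/(1-\delta^2)$, which is finite because $\delta<1$. Indeed $\delta^2\le\epsilon^2/(4Cm)<1/4$ under the hypothesis $m\le\epsilon^2/(4C\delta^2)$ with $\epsilon<1/2$. So the clean piece contributes $O(1)$.

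Combining the two pieces gives $\E_{y\sim R}[\chi^2(A_y,\cN(0,1))]\le \delta^2/(1-\delta^2)+1=O(1)\le C'm$. If this convexity route were unavailable, I would instead bound $\int\he_jf_y$ for $j\le m$ through \Cref{item:g-hermite-moments-intro} and for $j>m$ through Parseval with the pointwise bound $|f_y|\le\phi$; that approach is where a factor of $m$ could naturally appear.
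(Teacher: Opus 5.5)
Your proof is correct, and it takes a genuinely different and sharper route than the paper's.

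\textbf{The paper's route.} The paper writes $\E_{y\sim R}[\chi^2(A_y,\cN(0,1))]+1=\int\!\!\int H_y(x)^2/(R(y)\phi(x))\,dx\,dy$ with $H_y=A_yR$. It replaces $R(y)$ by its lower bound $(1-\epsilon)\phi(y)$ and splits $|H_y|/\phi(y)$ into a clean term and two contamination terms via an approximate triangle inequality. It then controls the contamination terms through the pointwise bound $D(y)/\phi(y)\le 1+\sum_{i}|a_i(y)|B_i$ with $B_i=(Cm)^{i/2}$. The factor $m$ in its final bound comes from the Cauchy--Schwarz step $(\sum_{i=1}^m|a_i|B_i)^2\le m\sum_i a_i^2B_i^2$, after which it sums the geometric series $\sum_i(Cm\delta^2/\epsilon^2)^i$.

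\textbf{Your route.} You instead apply convexity of $P\mapsto\chi^2(P,\cN(0,1))$ to the mixture form \eqref{eq:Ay-alt-form}, keeping its exact weights $(1-\epsilon)\phi(y)/R(y)$ and $\epsilon D(y)/R(y)$. Averaging over $y\sim R$ cancels $R(y)$. The clean part becomes $(1-\epsilon)\,\E_{G\sim\cN(0,1)}[\chi^2(\cN(\delta G,1-\delta^2),\cN(0,1))]=(1-\epsilon)\delta^2/(1-\delta^2)$. The contaminated part is at most $\epsilon\int D=\epsilon$, because $\chi^2(D_y,\cN(0,1))=\int f_y^2/\phi\le 1$.

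This yields the $m$-independent bound $(1-\epsilon)\delta^2/(1-\delta^2)+\epsilon=O(1)$, which is at most $C'm$ for $m\ge1$. It never needs to control $D(y)/\phi(y)$ or use the specific sizes of the $B_i$; the only inputs are $\int f_y=0$ and $|f_y|\le\phi$. As a small bonus, an $O(1)$ chi-squared bound removes the $\sqrt m$ factor from the tolerance bound in \Cref{lem:sq-hardness-conditional-ngca}.

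\textbf{Two points to tighten.} First, state explicitly where the hypothesis $m\le\epsilon^2/(4C\delta^2)$ enters your argument. It gives $\kappa\le 1$ in \eqref{eq:qprime-def}, hence \eqref{eq:D-pointwise-lb}, hence \Cref{item:f-bound}; this is better than just citing the condition. Second, your opening Hermite-expansion paragraph is not used by the convexity argument and can be dropped.
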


Plugging the chi-squared bound to \Cref{lem:sq-hardness-conditional-ngca} with $m=\floor{\frac{\epsilon^2}{4C \delta^2}}$, we can conclude that an SQ algorithm that solves the testing problem (\ref{eq:con-ngca-test0})-(\ref{eq:con-ngca-test1}), which is equivalent to (\ref{eq:reg-test-h0})-(\ref{eq:reg-test-h1}), either needs $2^{\dimension^{\Omega(1)}}$ many queries or at least a single query with tolerance at most  $p^{-\Omega(\epsilon^2/\delta^2)}$. This then leads to the conclusion of \Cref{thm:ThmFormalSQ} by \Cref{lem:est-implies-testing}. The details of the proof will be given in \Cref{sec:pf-SQ}.

\subsection{Lower Bounds Against Low-Degree Polynomial Tests}

In this section, we show that the computational lower bounds in \Cref{thm:ThmFormalSQ} for the class of SQ algorithms also apply to low-degree polynomial tests.
We refer the reader to \cite{Hopkins-thesis,KunWB19,BreBHLS21,Wein25-survey} for further details and present a brief background below.

We consider a Bayesian version of the testing problem described in \eqref{eq:reg-test-h0}-\eqref{eq:reg-test-h1} by imposing a prior distribution $H$ on $v$, where $H$ is supported on the unit sphere $\cS^{p-1}$.
Formally, the samples 
$(X_i,y_i)_{i=1}^n$ are generated  i.i.d.\  from a distribution $P$,
with the following two disjoint hypotheses:
\begin{align}
\label{eq:low-degree-testing0}
    H_0: \qquad&P = \cN(0,I_\dimension) \otimes R\\ 
    H_1: \qquad&P = \mathbb{P}_{\delta v,\sigma,Q^v}, \text{where $v \sim H$}\,.
\label{eq:low-degree-testing}
\end{align}
Here, $R$ and $\mathbb{P}_{\delta v,\sigma,Q^v}$ are defined as in \eqref{eq:reg-test-h0}-\eqref{eq:reg-test-h1}.
We choose $H$ to be the uniform distribution over a large set of nearly orthogonal unit vectors.\footnote{A qualitatively similar result holds when $H$ is the uniform distribution on the sphere.}

We restrict our attention to tests that are polynomials in the input samples $(X_i,y_i)_{i=1}^n$.
A degree-$k$ $n$-sample polynomial test for this problem is a degree-$k$ polynomial $h: \R^{(\dimension+1) \times n} \to \R$ and a threshold $t \in \R$. The corresponding test evaluates the polynomial $h$ on the samples and returns $H_0$ if and only if $h((X_1,y_1),\dots,(X_n,y_n)) > t$.
In this context, the degree of the polynomial serves  as a proxy for the runtime:
roughly speaking, the class of degree-$k$ polynomials is interpreted as a proxy for the class of all algorithms running in time $(n\dimension)^{\widetilde{\Theta}_{n\dimension}(k)}$.

A standard way for analyzing the performance of (polynomial) tests is to show that the variance of the test under both null and the alternate is much smaller than the difference in the expected values under the null and the alternate; once this is established, Chebyshev's inequality implies that both the Type-I and Type-II errors are bounded. 
The following definition is a necessary condition for a valid test whose performance is analyzed in this way.
\footnote{It is not sufficient because the variance under the alternative is not considered.} 
\begin{Definition}[Low-degree good-distinguisher polynomial]
  Consider the testing problem in \eqref{eq:low-degree-testing0}-\eqref{eq:low-degree-testing} and denote the null distribution by $\bbP_0 := \cN(0,I_\dimension) \otimes R$.
  We say that a degree-$k$ $n$-sample polynomial $h$ is a $\tau$-distinguisher
  if 
    \begin{align}
      \left|  \E_{ Z \sim \bbP_0^{\otimes n}}[h(Z)] - \E_{v \sim H}\left[ \E_{Z \sim \bbP_{\delta v, \sigma, Q^v}^{\otimes n}}[h(Z)]\right]\right| \geq \tau \sqrt{\var_{Z \sim \bbP_0^{\otimes n}}[h(Z)]}\,.
    \end{align}
    The \emph{advantage} of the polynomial $h$ is defined to be the largest $\tau$ that satisfies the definition above.
\end{Definition}

The advantage $\tau$ of a test corresponds to the signal to noise ratio of the test:
if the advantage of the polynomial is less than $c$ for a constant $c>0$,
then vanishing Type-I and Type-II error cannot be achieved by the standard analysis mentioned above.
Thus, upper bounding the advantage of a test by a constant, say $1$, is viewed as its failure of the test for the distinguishing problem.

Our main result in this section 
provides evidence that polynomial-time algorithms must use $\dimension^{\Omega(\epsilon^2/\delta^2)}$ samples as opposed to the information-theoretic sample complexity of $\frac{\dimension}{\epsilon^2} e^{\epsilon^2/\delta^2}$ samples.
Recall that in the framework of low-degree polynomial tests, polynomial-time algorithms correspond to degree-$k$ tests with small $k$:   the community standard allows $k$ to be at most $\mathrm{poly}(\log(n))$.
In fact, the result rules out sub-exponential
time algorithms that use less than $\dimension^{\Omega(\epsilon^2/ \delta^2)}$ samples: it shows that $k$ must be larger than $\dimension^{\Omega(1)}$, which corresponds to the time complexity of $(n\dimension)^{\widetilde{\Theta}(\dimension^{\Omega(1)})}$.
This result is obtained by using the relationship between low-degree polynomial tests and statistical query algorithms established in \cite{BreBHLS21}.
\begin{Corollary}
Consider the testing problem in \eqref{eq:low-degree-testing0}-\eqref{eq:low-degree-testing} with $\sigma,R,Q$ constructed in \Cref{sec:con-NGCA}.
Let $\dimension \gtrsim (\epsilon^2/\delta^2)^{\Omega(1)}$ and $k < \dimension^{\Omega(1)}$ and $\epsilon/\delta\gtrsim 1$. Then, there exists some distribution $H$ on $\cS^{p-1}$ such that
if a polynomial is an $n$-sample degree-$k$ $\tau$-distinguisher with $\tau \geq 1$ (that is, the polynomial has advantage at least $1$),
we must have  $ n \gtrsim \dimension^{\Omega(\frac{\epsilon^2}{\delta^2})}$.
\end{Corollary}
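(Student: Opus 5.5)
We will derive this corollary from the SQ hardness of the conditional NGCA instance built in \Cref{sec:con-NGCA}, using the generic SQ-to-low-degree transfer of \cite{BreBHLS21}. That result says the following. Suppose a Bayesian testing problem with null $\bbP_0$ and alternatives $\{\bbP_v\}_{v\sim H}$ has small pairwise correlations: $|\chi_{\bbP_0}(\bbP_u,\bbP_v)| \leq \gamma$ for most pairs $u\neq v$ drawn from $H$, and $\chi^2(\bbP_v,\bbP_0)\leq \beta$. Then any $n$-sample degree-$k$ polynomial has advantage bounded in terms of $n$, $k$, $\gamma$, $\beta$ and the size of the family. The conclusion is essentially that advantage at least $1$ forces $n \gtrsim 1/\gamma$, provided $k$ is below $\log$ of the family size and the family is large.

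\textbf{Step 1 (prior).} Let $H$ be uniform over a set $S\subset\cS^{\dimension-1}$ of $2^{\Omega(\dimension^{c})}$ unit vectors with pairwise $|u^\top v|\leq \dimension^{-c'}$ for small constants $c,c'>0$. Such a set exists by a random construction; this is the set used inside the proof of \Cref{lem:sq-hardness-conditional-ngca}.

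\textbf{Step 2 (pairwise correlation).} We will bound the correlation of the alternatives under the null. By the structure $X\mid y\sim P_v^{A_y}$ with the same $R$ under both hypotheses, the standard conditional-NGCA computation gives
\[
\chi_{\bbP_0}(\bbP_u,\bbP_v)=\E_{y\sim R}\Big[\sum_{j\geq 1}(u^\top v)^j\, \E_{A_y}[\he_j]^2\Big].
\]
Since each $A_y$ matches $m$ moments with $\cN(0,1)$, the terms with $j\leq m$ vanish. Hence
\[
|\chi_{\bbP_0}(\bbP_u,\bbP_v)|\leq |u^\top v|^{m+1}\,\E_{y\sim R}[\chi^2(A_y,\cN(0,1))].
\]
By \Cref{lem:chi-squared-bd}, this is at most $\dimension^{-c'(m+1)} C'm$. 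Taking $m=\floor{\epsilon^2/(4C\delta^2)}$ gives $\gamma=\dimension^{-\Omega(\epsilon^2/\delta^2)}$, where the factor $m$ is absorbed using $\dimension\gtrsim (\epsilon^2/\delta^2)^{\Omega(1)}$ and $\epsilon/\delta\gtrsim 1$. The same identity with $u=v$ gives $\beta\leq 1+C'm$.

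\textbf{Step 3 (plug in).} Apply the theorem of \cite{BreBHLS21}, which requires $k<\dimension^{\Omega(1)}$ so that the family size dominates. It yields that the maximal advantage of degree-$k$ $n$-sample polynomials is $O(1)$ unless $n\gtrsim \gamma^{-1}$ up to lower-order factors. Consequently, advantage at least $1$ forces $n\gtrsim \dimension^{\Omega(\epsilon^2/\delta^2)}$.

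\textbf{Main obstacle.} The delicate step is matching the hypotheses of \cite{BreBHLS21}. Their result is stated for a null product distribution and a mixture alternative; here we must check that the data $(X_i,y_i)$ fit that framework, that the $\chi^2$ bound is uniform over $v$, and that the polynomial-degree condition $k<\dimension^{\Omega(1)}$ is compatible with the size of $S$. The correlation identity in Step 2 is a routine Hermite expansion, with Ornstein--Uhlenbeck along $u^\top v$, conditional on $y$.
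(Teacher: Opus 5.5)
Your proposal is correct and follows the paper's route. The paper simply cites \cite[Corollary 6.4]{DiaKPPS21}, which packages the argument you reconstruct: a near-orthogonal prior, the Hermite bound $|u^\top v|^{m+1}\E_{y\sim R}[\chi^2(A_y,\cN(0,1))]$ on pairwise correlations, and the SQ-to-low-degree transfer of \cite{BreBHLS21}, fed with the $m=\Theta(\epsilon^2/\delta^2)$ moment matching and \Cref{lem:chi-squared-bd}. One small looseness: your conclusion needs the low-degree advantage strictly below $1$, not merely $O(1)$, which follows by shrinking $n$ by a constant factor (e.g.\ $n\gamma\le 1/4$).
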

\begin{proof}
This follows from \Cref{thm:ThmFormalSQ}, where we establish that underlying $\{A_y\}_{y \in \R}$ match $m = \Omega(\epsilon^2/\delta^2)$ moments, and  \cite[Corollary 6.4]{DiaKPPS21} by setting $c = 1/4$ therein.
\end{proof}

\section{Discussion}\label{sec:disc}

\subsection{Comparison with Different Contamination Models}\label{sec:models}
In this section, we contrast \modelref{model:adaptive} and \modelref{model:huber} with related contamination models that have been studied in the literature.
By saying that one contamination model is ``weaker'' (resp.\ ``stronger'') than another, we mean that it defines a subset (resp.\ superset) of distributions.

\paragraph{Weaker Contamination Models.} We begin with contamination models that are special cases of \modelref{model:adaptive}.
The ``weakest'' type of contamination we consider has clean covariates and a response contamination mechanism that is oblivious to the covariates.
There are two natural ways to define such an oblivious contamination of the responses.
The first contaminates only the additive noise, independently of the features.

\begin{Model}[Oblivious Contamination in Responses (I)]
\label{model:obv-1}
The pairs $\{(X_i,y_i)\}_{i=1}^n$ are independently drawn according to
\begin{align*}
X_i &\sim \cN(0,I_\dimension),\\
y_i\mid X_i &\sim  (1 - \epsilon) \cN(X_i^\top \beta,\sigma^2)  + \epsilon (\Dirac{X_i^\top \beta} \circledast Q),
\end{align*}
where $Q$ is an arbitrary distribution over $\R$, $\Dirac{z}$ denotes a point mass at $z$, and $\circledast$ denotes convolution between distributions.
Equivalently, $y_i = X_i^\top \beta + (1-V_i)z_i + V_i\gamma_i$,
where $X_i \sim \cN(0,I_\dimension)$, $V_i \sim \mathrm{Bernoulli}(\epsilon)$, $z_i \sim \cN(0,\sigma^2)$, and $\gamma_i \sim Q$, all mutually independent.
\end{Model}

This model has been studied extensively; see, for example, \cite{Tsakonas14, JaiTK14,BhatiaJK15,BhatiaJKK17, 
SugBRJ19,pesme2020online, d2021consistent,Steurer21Outliers,DiaGKLP25-oblivious}.
In particular, under \Cref{model:obv-1}, the minimax rate of estimating $\beta$ under $\ell_2$ norm is $\sigma\sqrt{\frac{\dimension}{n(1-\epsilon)^2}}$,
at least when $n  = \Omega\left(\frac{\dimension}{(1- \epsilon)^2}\right)$~\cite{Steurer21Outliers}.

In the second oblivious contamination model, the response $y$ is directly replaced by an arbitrary value (again oblivious of $X$) with probability $\epsilon$, as opposed to contaminating the additive noise.

\begin{Model}[Oblivious Contamination in Responses (II)]
\label{model:obv-2}
The pairs $\{(X_i,y_i)\}_{i=1}^n$ are independently drawn according to
\begin{align*}
X_i &\sim \cN(0,I_\dimension),\\ 
y_i \mid X_i &\sim (1 - \epsilon) \cN(X_i^\top \beta, \sigma^2) + \epsilon Q,
\end{align*}
where $Q$ is an arbitrary distribution over $\R$.
Equivalently, $y_i = (1 - V_i)(X_i^\top \beta + z_i) + V_i\gamma_i$,
where $X_i \sim \cN(0,I_\dimension)$, $V_i \sim \mathrm{Bernoulli}(\epsilon)$, $z_i \sim \cN(0,\sigma^2)$, and $\gamma_i \sim Q$, all mutually independent.
\end{Model}

While we are not aware of prior work that studies this specific contamination model,
it suffers from the same drawback as \modelref{model:obv-1}: the contamination cannot depend on the covariates, unlike in \modelref{model:adaptive}.
As we show in \Cref{prop:different-models}, both of the above contamination models are special cases of \modelref{model:adaptive}.

\looseness=-1\paragraph{Stronger Contamination models.} Having considered contamination models that are weaker than \modelref{model:adaptive}, we now turn to stronger contamination models.
In the first strengthening,
the covariates are still clean as in \modelref{model:adaptive}, but the contamination rate may depend on $X$ and need not be uniformly bounded by $\epsilon$; we  require only that the \emph{average} contamination rate is at most $\epsilon$.

\begin{Model}[Non-Uniform Contamination in Responses]
\label{model:non-uni}
The pairs $\{(X_i,y_i)\}_{i=1}^n$ are independently drawn according to
\begin{align*}
X_i &\sim \cN(0,I_\dimension),\\
y_i \mid X_i &\sim (1 - \epsilon_{X_i}) \cN(X_i^\top \beta, \sigma^2) + \epsilon_{X_i} Q_{X_i},
\end{align*}
where $Q_{X_i}$ is an arbitrary conditional distribution depending on $X_i$ and $\E[\epsilon_{X_i}] = \epsilon$.
\end{Model}

Observe that this model is incomparable to the classical Huber contamination model (\Cref{model:huber}): the density of the clean distribution might fall below $(1 - \epsilon) P_{\beta,\sigma}$; on the other hand, the covariates are always clean.
In matrix notation, we observe $(X,y)$ with 
$$y = X \beta + Z + \Gamma,$$
where $X$ is an $n\times \dimension$ matrix with independent $\cN(0,1)$ entries, $Z \sim \cN(0, \sigma^2 I_n)$ independent of $X$, and $\Gamma$ is a random vector with independent coordinates whose expected number of nonzero coordinates is at most $\epsilon n$ (and which may depend on the $i$-th row of $X$ but not on $Z$).
A slightly stronger adversarial model is studied in \cite{dalalyan2019outlier}, where the contamination vector $\Gamma$ is allowed to be a completely arbitrary $\epsilon n$ sparse vector (and in particular may depend arbitrarily on both $X$ and $Z$).
In this setting, \cite{dalalyan2019outlier} obtain an error rate of order $\sigma\left(\sqrt{\frac{\dimension}{n}}+\epsilon\right)$ for this problem and argue its optimality by appealing to the lower bound of \cite{Gao20}.
However, \cite{Gao20} proved the lower bound under Huber contamination, which as argued earlier is incomparable to the setting in \cite{dalalyan2019outlier}.
Nevertheless, we show in \Cref{prop:different-models} that similar ideas do yield a matching lower bound under \Cref{model:non-uni} and hence the setting in \cite{dalalyan2019outlier}.

We now consider a different strengthening of \modelref{model:adaptive}.
Observe that \modelref{model:adaptive} can be viewed as a special case of \modelref{model:huber} in which the marginal distribution of the covariates under $Q$ is clean, i.e.\ exactly $\cN(0,I_\dimension)$.
The next model allows the marginal distribution of $X$ under $Q$ to be corrupted as well, provided it remains absolutely continuous with respect to the Gaussian design and its density is uniformly bounded.

\begin{Model}[Huber Contamination with Bounded Marginal Likelihood]
\label{model:hub-bdd-marg}
The pairs $\{(X_i,y_i)\}_{i=1}^n$ are independently drawn according to
\begin{align*}
(X_i,y_i) &\sim (1-\epsilon)P_{\beta,\sigma} + \epsilon Q\\
\text{such that for all } x \in \R^\dimension:&\quad \epsilon q_X(x) \leq \phi(x),
\end{align*}
where $P_{\beta,\sigma}$ is the Gaussian linear model (\ref{eq:glm}), $Q$ is an arbitrary distribution over $(X,y)$, $q_X$ is the marginal density of $X$ under $Q$, and $\phi$ denotes the density of $\cN(0,I_\dimension)$.
\end{Model}

One can generalize this model by requiring $\epsilon q_X(x) \leq B \phi(x)$ for some finite $B \ge 1$, but for simplicity we restrict attention to the case $B=1$.

While this model might seem artificial, we mention it because of its relevance to \cite{chinot2020erm}.
Building on the aforementioned work of   \cite{dalalyan2019outlier} on linear regression, \cite{chinot2020erm} studies a general convex ERM problem in which the marginals are assumed to be clean and the responses are corrupted by an $\epsilon n$-sparse vector.
For the particular task of linear regression, \cite[Theorem~7]{chinot2020erm} is used there to claim a minimax lower bound of order $\sigma\left(\sqrt{\frac{\dimension}{n}}+\epsilon\right)$, implying inconsistency for any constant $\epsilon$.
However, the contamination model considered in the statement of  \cite[Theorem~7]{chinot2020erm} is in fact
\Cref{model:adaptive},
and hence the statement of \cite[Theorem~7]{chinot2020erm} is incorrect in light of our \Cref{thm:info-upper}.
The error in the proof of \cite[Theorem~7]{chinot2020erm} is that the contaminated distribution used there alters the marginal distribution of $X$; 
In fact, their construction is an instance of \modelref{model:hub-bdd-marg}.
Consequently, the lower bound argument in \cite{chinot2020erm} does not directly apply to the clean marginal setting, which is the setting of interest for us as well as \cite{chinot2020erm}.
Fortunately, as shown below in \Cref{prop:different-models}, the broader claim of \cite{chinot2020erm} continues to hold for linear regression by embedding an instance of \modelref{model:non-uni}, which does have clean marginals.

Finally, we consider the well-known TV contamination model to relate it with \modelref{model:non-uni}.

\begin{Model}[TV Contamination]
\label{model:TV}
The pairs $\{(X_i,y_i)\}_{i=1}^n$ are independently drawn according to
\begin{align*}
(X_i,y_i) &\sim Q, \qquad \text{ such that } \TV(P_{\beta,\sigma}, Q) \leq \epsilon,
\end{align*}
where $Q$ is an arbitrary joint distribution over $\R^\dimension \times \R$ and $P_{\beta,\sigma}$ is the Gaussian linear model (\ref{eq:glm}).
\end{Model}

\Cref{fig:models} relates these different contamination models to each other and highlights which of them permit consistent estimation.

\begin{Proposition}
\label{prop:different-models}
The following statements hold:
\begin{enumerate}[label=(\roman*)]
  \item \modelref{model:obv-1} and  \modelref{model:obv-2} are weaker than \modelref{model:adaptive}.
  \item \modelref{model:non-uni} is weaker than \modelref{model:TV} and stronger than \modelref{model:adaptive}.
  \item \modelref{model:hub-bdd-marg} is weaker than \modelref{model:huber} and stronger than \modelref{model:adaptive}.
  \item The minimax rate of estimation under \modelref{model:non-uni} and \modelref{model:hub-bdd-marg} is $\sigma\left(\sqrt{\frac{\dimension}{n}}+\epsilon\right)$.
\end{enumerate}
\end{Proposition}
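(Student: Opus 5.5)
Items (i)–(iii) are inclusions between classes of distributions, which I will check directly from the definitions. Item (iv) combines an upper bound with a lower bound.

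\emph{Inclusions.} For (i), in \modelref{model:obv-1} set $Q_{X}=\Dirac{X^\top\beta}\circledast Q$. In \modelref{model:obv-2} set $Q_X=Q$. Both are conditional distributions depending on $X$, so they are instances of \modelref{model:adaptive}.

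For (ii), \modelref{model:adaptive} is the special case $\epsilon_X\equiv\epsilon$ of \modelref{model:non-uni}. Conversely, a distribution in \modelref{model:non-uni} has the same $X$-marginal as $P_{\beta,\sigma}$. Given $X$, the TV distance between the two conditional laws of $y$ is at most $\epsilon_X$. Averaging over $X$ gives $\TV\le\E[\epsilon_X]=\epsilon$, so it lies in \modelref{model:TV}.

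For (iii), the inclusion of \modelref{model:hub-bdd-marg} in \modelref{model:huber} is immediate. For the other inclusion, write an instance of \modelref{model:adaptive} as $(1-\epsilon)P_{\beta,\sigma}+\epsilon Q$, where $Q$ is the joint law $X\sim\cN(0,I_\dimension)$, $y\mid X\sim Q_X$. Then $q_X=\phi$, so $\epsilon q_X\le\phi$ holds.

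\emph{Minimax rate.} For the upper bound in (iv), I will run median regression~\eqref{eq:md-reg-hd} and reuse the proof of \Cref{thm:md-reg-hd}. That proof only needs the fraction of corrupted responses to be at most $\epsilon$ on average, together with clean Gaussian covariates. For \modelref{model:non-uni}, I couple each sample with a Bernoulli$(\epsilon_{X_i})$ outlier indicator; the expected number of outliers is then $\epsilon n$, and the $\ell_1$-loss argument goes through. For \modelref{model:hub-bdd-marg}, the condition $\epsilon q_X\le\phi$ means the outlier covariates have density bounded by $\phi/\epsilon$. So any event in $X$ of Gaussian probability $\eta$ receives outlier mass at most $\eta$. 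This is what controls the $\sup_v$ deviations of the terms $|v^\top X_i|$ in the outliers' contribution to the subgradient, via a bound of the form $\E_Q|v^\top X|\le\epsilon^{-1}\E_\phi|v^\top X|$, which after multiplying by $\epsilon$ gives $O(\epsilon)$. Alternatively, regression depth gives $\sigma(\sqrt{\dimension/n}+\epsilon)$ under \Cref{model:huber}~\citep{Gao20}, and this transfers directly to both models.

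For the lower bound, the $\sigma\sqrt{\dimension/n}$ term is standard. For the $\sigma\epsilon$ term, since (ii) and (iii) show that \modelref{model:hub-bdd-marg} is not directly comparable to \modelref{model:non-uni}, I give a two-point construction for each. Take $\dimension=1$, $\beta_0=0$, $\beta_1=\delta$ with $\delta\asymp\sigma\epsilon$.

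For \modelref{model:non-uni}, I use contamination rate $\epsilon_X=\min(1,c|X|\delta/\sigma)$. This makes $\E\epsilon_X\lesssim\delta/\sigma\asymp\epsilon$. Since $\TV(\cN(0,\sigma^2),\cN(\delta X,\sigma^2))\lesssim|X|\delta/\sigma$, the two-point identity~\eqref{eq:match-2} applied conditionally on $X$ (only one side needs to be contaminated) makes the two joint laws identical. Hence no estimator can distinguish $\beta=0$ from $\beta=\delta$.

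For \modelref{model:hub-bdd-marg}, I would mimic the construction of \cite{chinot2020erm}. Choose $Q_0,Q_1$ with $(1-\epsilon)P_{0,\sigma}+\epsilon Q_0=(1-\epsilon)P_{\delta,\sigma}+\epsilon Q_1$, where each $Q_j$ equals the difference of the mixtures' positive parts, normalized. Because $\TV(P_{0,\sigma},P_{\delta,\sigma})\asymp\delta/\sigma\le\epsilon$, the $X$-marginal of $\epsilon Q_j$ is at most the positive part of $(1-\epsilon)(p_{\delta}-p_0)$ integrated over $y$. That positive part is pointwise at most $\phi(x)\min(1,|x|\delta/\sigma)\le\phi(x)$.

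The main obstacle is this last verification. The normalization must leave slack: I must check that $\epsilon Q_j$ has total mass exactly $\epsilon$ while its $X$-marginal stays below $\phi$. If there is extra mass to place, I will put it on $y$ independent of $X$ with marginal $\phi$, scaled appropriately. That requires the mass used so far to be at most $\epsilon$ and the $X$-density to have room, which reduces to $(1-\epsilon)\min(1,|x|\delta/\sigma)+(\epsilon-\text{used})\le1$. This holds when $\delta\le c\sigma\epsilon$.
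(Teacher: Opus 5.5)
Your inclusions (i)--(iii) match the paper's, and your lower bounds are essentially the paper's two-point constructions. One correction there: both hypotheses must be contaminated at rate $\epsilon_x$, exactly as in \eqref{eq:match-2}, because $\cN(0,\sigma^2)$ is never of the form $(1-\epsilon_x)\cN(\delta x,\sigma^2)+\epsilon_x Q$ with $\epsilon_x<1$.

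The genuine gap is the upper bound in (iv): median regression does \emph{not} attain $\sigma\epsilon$ under either model. The proof of \Cref{thm:md-reg-hd} uses that the outlier indicators are independent of the $X_i$, which gives $\sum_{i\in\mathcal O}\E|X_i^\top u|\asymp|\mathcal O|$. In both models the adversary can instead place every outlier at high leverage. Take $\dimension=1$, $\beta=0$, and $t$ with $\bbP(|G|\ge t)=\epsilon$ for $G\sim\cN(0,1)$. Contaminate exactly the points with $|X|\ge t$, either via $\epsilon_x=\indi\{|x|\ge t\}$ or via $\epsilon q_X=\phi\,\indi\{|x|\ge t\}\le\phi$, and set $y=KX$ there with $K$ large. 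The population $\ell_1$ subgradient at $b$ is then approximately $\frac{2\phi(0)}{\sigma}b-\E[|X|\indi\{|X|\ge t\}]=\frac{2\phi(0)}{\sigma}b-2\phi(t)$. So the median-regression limit sits at distance $\asymp\sigma\phi(t)\asymp\sigma\epsilon\sqrt{\log(1/\epsilon)}$ from $\beta$.

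Your key inequality also does not give what you need. Multiplying $\E_Q|v^\top X|\le\epsilon^{-1}\E_\phi|v^\top X|$ by $\epsilon$ gives $\sqrt{2/\pi}$, not $O(\epsilon)$. The sharp value of $\sup\{\int|v^\top x|\,\epsilon q_X(x)\,dx:\ 0\le\epsilon q_X\le\phi,\ \int\epsilon q_X=\epsilon\}$ is $\E[|G|\indi\{|G|\ge t\}]=2\phi(t)\asymp\epsilon\sqrt{\log(1/\epsilon)}$.

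Your fallback via regression depth is fine for \modelref{model:hub-bdd-marg}, which is a submodel of \Cref{model:huber}. It does not transfer directly to \modelref{model:non-uni}, because that model is not contained in Huber's model, as the paper stresses. In the example above, on $\{|x|\ge t\}$ the joint law has no $(1-\epsilon)P_{\beta,\sigma}$ component.

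The missing step is the one the paper takes. By your own (ii), \Cref{model:non-uni} lies inside \Cref{model:TV}. The depth estimator tolerates TV contamination: $\mathcal D(\wt\beta,\cdot)$ is an infimum of probabilities, so $\sup_{\wt\beta}|\mathcal D(\wt\beta,\bbP)-\mathcal D(\wt\beta,P_{\beta,\sigma})|\le\TV(\bbP,P_{\beta,\sigma})\le\epsilon$. Combining this with the VC bound and the curvature argument from the proof of \Cref{thm:info-upper}, run without truncation, gives $\sigma(\sqrt{\dimension/n}+\epsilon)$ for both models.

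Finally, the slack issue you flag in the \modelref{model:hub-bdd-marg} lower bound is not an obstacle. The conditional TV is at most $1$ and the slack mass is at most $\epsilon$, so $(1-\epsilon)\cdot 1+\epsilon\le 1$ always holds, with no condition on $\delta$ beyond $\TV\le\epsilon/(1-\epsilon)$. The paper sidesteps slack entirely by choosing $\epsilon=\TV(P,P')/(1+\TV(P,P'))$.
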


\begin{figure}[htbp]
\centering
\usetikzlibrary{arrows.meta,shapes.geometric} 
\tikzset{
  model/.style={
    draw,
    ellipse,
    minimum width=1.4cm,
    minimum height=0.9cm,
    align=center
  },
  arrowsF/.style={
    -{Stealth[length=3mm]},
    thick,
    smooth,
    draw=black
  },
  consistent/.style=
  { fill=green!10!white
  },
  inconsistent/.style={
    fill=red!10!white
  }
}
\begin{tikzpicture}[scale=1]
\def\layerA{-3}
\def\layerB{0.5}
\def\layerC{4}
\def\layerD{8.5}
\def\rowA{2}
\def\rowB{1}
\def\rowC{-0.5}
\def\rowD{-1.5}
\def\rowE{-2.5}
  \node[model,dashed,consistent] (Ob1) at (\layerA,  \rowA) {Oblivious I \\
  {\small (\Cref{model:obv-1})}};
  \node[model,dashed,consistent] (Ob2) at (\layerA,  \rowC) {Oblivious II\\
  {\small (\Cref{model:obv-2})}};
  \node[model,dashed,consistent] (Adap) at (\layerB,  \rowB) {Adaptive\\{\small (\Cref{model:adaptive})}\\
  Our focus};
  \node[model, inconsistent] (HubBdd) at ( \layerC, \rowC) {Huber with\\Bounded Marginal\\ Likelihood 
  {\small (\Cref{model:hub-bdd-marg})}};
  \node[model,dashed, inconsistent] (NonUni) at (\layerC,  \rowA) {Non-Uniform\\ {\small (\Cref{model:non-uni})}};
  \node[model, inconsistent] (Hub) at (\layerD,  \rowC) {Huber\\{\small (\Cref{model:huber})}};
  \node[model, inconsistent] (TV) at ( \layerD,  \rowA) {TV\\{\small (\Cref{model:TV})}};
  \draw[arrowsF]    (Ob1) to (Adap);
  \draw[arrowsF]    (Ob2) to (Adap);
  \draw[arrowsF]    (Adap) to (HubBdd);
  \draw[arrowsF]    (Adap) to (NonUni);
  \draw[arrowsF]    (HubBdd) to (Hub);
  \draw[arrowsF]    (Hub) to (TV);
  \draw[arrowsF]    (NonUni) to (TV);
\end{tikzpicture}
\caption{Comparison of different contamination models. An arrow from Model A to Model B indicates that the former is a weaker contamination model.
A green shade indicates that the model permits consistency for any fixed $\epsilon$ that is sufficiently small (say $\epsilon<1/5$), while the red shade indicates that consistency is not possible for any fixed $\epsilon$.
A dashed oval indicates that the covariates are clean, i.e., $X \sim \cN(0,I_p)$.
See \Cref{prop:different-models} for a precise statement.
}
\label{fig:models}
\end{figure}
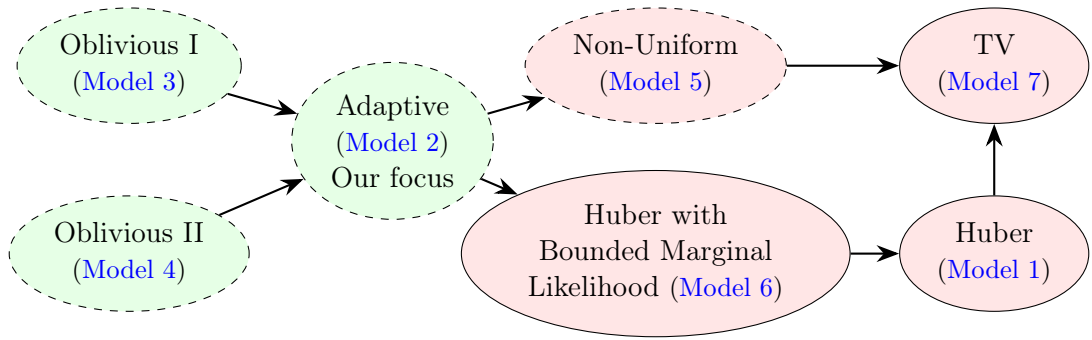
To summarize, among the seven variations, \modelref{model:adaptive} is the strongest contamination setting that permits consistent estimation for a constant level of contamination proportion.



\subsection{Effects of the Covariate Distribution}

\Cref{model:adaptive} assumes that covariates are drawn from $\cN(0,I_\dimension)$, which turns out to have a critical effect on the minimax rate (\ref{eq:opt-rate}). In this section, we show that a different covariate distribution may result in a different minimax rate. We will consider the class of generalized Gaussian distribution as a specific example, though the same analysis can be extended to a more general class of distributions.

\begin{Definition}[Generalized Gaussian Distribution] Given some $\gamma>0$. A $\dimension$-dimensional random vector follows a spherical generalized Gaussian distribution, $X\sim \mathcal{GN}_{\gamma}(0,I_\dimension)$, if the density function of the one-dimensional projection $v^\top X$ is given by
\begin{equation}
\frac{d}{dt}\mathbb{P}\left(v^\top X\leq t\right) \propto \exp\left(-\frac{|t|^{\gamma}}{2}\right),
\end{equation}
for all $v\in\mathcal{S}^{\dimension-1}$.
\end{Definition}

The generalized Gaussian distribution covers the standard Gaussian with $\gamma=2$ and the spherical Lapalce distribution with $\gamma=1$. The dependence of the minimax rate on $\gamma$ is given by the theorem below.

\begin{Theorem}\label{thm:tail}
Consider data generated from \Cref{model:adaptive} with $\mathcal{N}(0,I_\dimension)$ replaced by $\mathcal{GN}_{\gamma}(0,I_\dimension)$. For any $\alpha\in(0,1)$, there exist $C,c>0$ such that whenever $\sqrt{\frac{\dimension}{n}}+\epsilon\leq c$, the estimator (\ref{eq:depth-est}) with $t=\left(\log(n\epsilon^2/\dimension+e)/3\right)^{1/\gamma}$ satisfies
\begin{equation}
\|\wh{\beta}-\beta\|_2\leq C\sigma\left(\sqrt{\frac{\dimension}{n}}+\frac{\epsilon}{\left(\log(n\epsilon^2/\dimension+e)\right)^{1/\gamma}}\right),
\end{equation}
with probability at least $1-\alpha$. Moreover, there exists some $C'>0$ such that
$$\inf_{\wh{\beta}}\sup_{\beta,Q}\mathbb{P}_{\beta,\sigma,Q}\left(\|\wh{\beta}-\beta\|_2\geq C'\sigma\left(\sqrt{\frac{\dimension}{n}}+\frac{\epsilon}{\left(\log(n\epsilon^2/\dimension+e)\right)^{1/\gamma}}\right)\right)\geq\frac{1}{2},$$
where $\mathbb{P}_{\beta,\sigma,Q}$ stands for the data distribution of \Cref{model:adaptive} with $\mathcal{N}(0,I_\dimension)$ replaced by $\mathcal{GN}_{\gamma}(0,I_\dimension)$.
\end{Theorem}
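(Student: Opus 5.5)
We prove \Cref{thm:tail} by rerunning the proofs of \Cref{thm:info-upper} and \Cref{thm:info-lower}, replacing the Gaussian tail of each projection $v^\top X$ by the generalized Gaussian tail $\bar F_\gamma(t):=\mathbb{P}(|v^\top X|\geq t)\asymp t^{1-\gamma}e^{-t^\gamma/2}$ for $t\gtrsim 1$. We may take $\epsilon\geq\sqrt{\dimension/n}$; otherwise the claim reduces to the standard $\sigma\sqrt{\dimension/n}$ rate, with $t=O(1)$ for the upper bound and the contamination-free argument for the lower bound.

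\textbf{Upper bound.} We follow the population-plus-uniform-deviation structure of the proof of \Cref{thm:info-upper}. Fix $\beta$ and a direction $v$, write $\Delta=\wt\beta-\beta$, and consider directions $v$ aligned with $\Delta$. On the event $|v^\top X|\geq t$, the clean response satisfies $y-X^\top\wt\beta=\sigma z-X^\top\Delta$, whose component along $v$ has size about $t\|\Delta\|_2$. Hence the clean part of the truncated depth of $\wt\beta$ in direction $v$ falls below that of $\beta$ by at least $c\,\bar F_\gamma(t)\min(1,t\|\Delta\|_2/\sigma)$. The orthogonal part $X^\top\Delta-(v^\top X)(v^\top\Delta)$ is independent of $v^\top X$ only in the Gaussian case; for a spherical $\mathcal{GN}_\gamma$ it is merely symmetric conditionally on $v^\top X$, and symmetry is all the sign argument needs. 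The contamination changes each truncated depth by at most $\epsilon\,\bar F_\gamma(t)$, because it acts only on points with $|v^\top X|\geq t$ and at rate $\epsilon$ there. The empirical process over halfspace-type indicators with truncation has deviation $\sqrt{\bar F_\gamma(t)\dimension/n}$ by a VC/Talagrand bound, which is the localized version already used for \Cref{thm:info-upper}. Comparing the two depths then gives
\[
\|\Delta\|_2\lesssim \frac{\sigma}{t}\Bigl(\epsilon+\sqrt{\dimension/(n\bar F_\gamma(t))}\Bigr).
\]
With $t=(\log(n\epsilon^2/\dimension+e)/3)^{1/\gamma}$ we get $\bar F_\gamma(t)^{-1/2}\lesssim (n\epsilon^2/\dimension)^{1/6}$ up to polynomial factors in $t$. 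So the second term is at most $\epsilon$ times a vanishing factor, and the bound becomes $\sigma\epsilon/(\log)^{1/\gamma}$.

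\textbf{Lower bound.} We reuse the Fano construction verbatim, with \Cref{cor:kl-for-fano} applied conditionally on $X$. \eqref{eq:kl-bd-lb} becomes
\[
\KL\lesssim \E\bigl(2\delta|G_\gamma|/\sigma-\sqrt{\pi/2}\,\epsilon\bigr)_+^2,
\]
where $G_\gamma$ is the one-dimensional generalized Gaussian. This is nonzero only when $|G_\gamma|\geq t_0:=c\sigma\epsilon/\delta$, and then it is at most $(\delta/\sigma)^2\E[G_\gamma^2;|G_\gamma|\geq t_0]\lesssim (\delta/\sigma)^2 t_0^{2}e^{-t_0^\gamma/2}\,\mathrm{poly}(t_0)$. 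Set $\delta=c'\sigma\epsilon/(\log(n\epsilon^2/\dimension+e))^{1/\gamma}$, so that $t_0^\gamma= C(\log(n\epsilon^2/\dimension+e))$ with $C$ large. Then $n\KL\lesssim n\epsilon^2 (n\epsilon^2/\dimension)^{-C/2}\mathrm{poly}\leq \dimension\log 2/4$, and Fano's inequality with the $2^\dimension$ packing finishes the proof. The $\sqrt{\dimension/n}$ term comes from the uncontaminated Gaussian-noise lower bound, which only needs $\E (v^\top X)^2\asymp 1$.

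\textbf{Main obstacle.} The hardest step is the upper bound, specifically the uniform-over-$v$ lower bound on the clean depth gap for non-Gaussian design. There the orthogonal component of $X$ is no longer independent of $v^\top X$, and the mismatch between the direction $v$ and $\Delta/\|\Delta\|_2$ must be controlled. We would handle this by conditioning on $v^\top X$ and using spherical symmetry of $\mathcal{GN}_\gamma$ along with the tail lemma. The tail lemma is the regular-variation estimate $\bar F_\gamma(t+s)/\bar F_\gamma(t)$ for $s\lesssim t^{1-\gamma}$. The exponent $1/3$ in the choice of $t$ leaves room to absorb the polynomial-in-$t$ factors.
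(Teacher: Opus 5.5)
Your proposal is correct and follows essentially the paper's route: the upper bound uses the same decomposition (depth deficit along $v=\Delta/\|\Delta\|_2$, an $\epsilon\,\mathbb{P}(|v^\top X|\geq t)$ contamination perturbation, and a VC deviation bound), and the lower bound uses the same Fano construction with \Cref{cor:kl-for-fano} applied conditionally on $X$. The ``main obstacle'' you flag does not arise, and the localization you invoke is not needed. The depth deficit is a supremum over directions, so the single choice $v=\Delta/\|\Delta\|_2$ gives $X^\top\Delta=\|\Delta\|_2\,v^\top X$ exactly, with no orthogonal component to control. Likewise, the plain non-localized $C\sqrt{p/n}$ VC bound already suffices, because $e^{t^\gamma}=(n\epsilon^2/p+e)^{1/3}$ makes $\sqrt{p/n}\,e^{t^\gamma}\lesssim \epsilon+\sqrt{p/n}$.
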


The dependence on $\gamma$ indicates that a covariate distribution with a heavier tail leads to a faster minimax rate. This agrees with our intuition in the one-dimensional setting that a pair $(X_i,y_i)$ with a larger $|X_i|$ has more information under contamination. In the high-dimensional setting, the subset $\{i\in[n]:|v^\top X_i|>t\}$ is expected to have a larger cardinality for each $v\in\mathcal{S}^{\dimension-1}$ when the tail is heavier. This phenomenon is unique to robust estimation. In comparison, when $\epsilon=0$ and the data does not have contamination, the minimax rate of estimating $\beta$ does not depend on $\gamma$.

It is also interesting to note that the covariate distribution does not affect the minimax rate of \Cref{model:huber}. For the class of generalized Gaussian distributions, the minimax rate is always $\sigma\left(\sqrt{\frac{\dimension}{n}}+\epsilon\right)$ under \Cref{model:huber}, regardless of the value of $\gamma$. The additional information resulted from the tail of the covariates is only available when the covariates do not have contamination.

\section{Proofs}\label{sec:pf}

\subsection{Proofs of Upper Bound Results}

This section presents proofs of \Cref{thm:1d}, \Cref{thm:info-upper} and \Cref{thm:md-reg-hd}.

\subsubsection{Proof of \Cref{thm:1d}}

We write $z_i=y_i-\beta X_i$. Then, the derivative of the objective function is
$$G_n(\wt{\beta})=\frac{1}{n}\sum_{i=1}^nX_i\text{sign}((\wt{\beta}-\beta)X_i-z_i)\indi\{|X_i|\geq t\}.$$
To show $|\wh{\beta}-\beta|\leq r$, it suffices to show that $G_n(\wt{\beta})<0$ for all $\wt{\beta}\leq  \beta-r$ and $G_n(\wt{\beta})>0$ for all $\wt{\beta}\geq \beta+r$. Since $G_n(\wt{\beta})$ is monotone, we only need to show $G_n(\beta-r)<0$ and $G_n(\beta+r)>0$. Chebyshev's inequality implies that $G_n(\beta+r)\geq \mathbb{E}G_n(\beta+r) - \frac{C}{\sqrt{n}}$ with high probability, where
$$\mathbb{E}G_n(\beta+r)\geq (1-\epsilon)\mathbb{E}X\text{sign}(rX-\sigma Z)\indi\{|X|\geq t\}-\epsilon\mathbb{E}|X|\indi\{|X|\geq t\},$$
with $X,Z\overset{iid}\sim \mathcal{N}(0,1)$. Define $f(r)=\mathbb{E}X\text{sign}(rX-\sigma Z)\indi\{|X|\geq t\}$, and it is clear that $f(0)=0$ and
$f'(r)=\frac{2}{\sigma}\mathbb{E}X^2\phi\left(\frac{rX}{\sigma}\right)\indi\{|X|\geq t\}$, where $\phi(\cdot)$ is the standard Gaussian density. When $r\leq\frac{\sigma}{2t}$, we have
$$f'(r)\geq \frac{2}{\sigma}\mathbb{E}X^2\phi\left(\frac{rX}{\sigma}\right)\indi\{t\leq |X|\leq  2t\}\geq \frac{2\phi(1)}{\sigma}\mathbb{E}X^2\indi\{t\leq |X|\leq  2t\},$$
which implies
$$f(r)\geq \left(r\wedge\frac{\sigma}{2t}\right)\frac{2\phi(1)}{\sigma}\mathbb{E}X^2\indi\{t\leq |X|\leq  2t\}.$$
Combining the above bounds, we know that $G_n(\beta+r)>0$ whenever
\begin{equation}
r\wedge \frac{\sigma}{2t} \geq \frac{\sigma}{2\phi(1)}\left(\epsilon \frac{\mathbb{E}|X|\indi\{|X|\geq t\}}{\mathbb{E}X^2\indi\{t\leq |X|\leq  2t\}} + \frac{C}{\sqrt{n}\mathbb{E}X^2\indi\{t\leq |X|\leq  2t\}}\right). \label{eq:thm1dcrude}
\end{equation}
We will bound the two terms on the right hand side of (\ref{eq:thm1dcrude}). When $t>10$, $\frac{\mathbb{E}|X|\indi\{|X|\geq t\}}{\mathbb{E}X^2\indi\{t\leq |X|\leq  2t\}}\leq t^{-1}\left(1-\frac{\mathbb{E}|X|\indi\{|X|> 2t\}}{\mathbb{E}|X|\indi\{|X|\geq  t\}}\right)^{-1}\leq C_1t^{-1}$. When $t\leq 10$, $\frac{\mathbb{E}|X|\indi\{|X|\geq t\}}{\mathbb{E}X^2\indi\{t\leq |X|\leq  2t\}}$ is a constant and thus we still have $\frac{\mathbb{E}|X|\indi\{|X|\geq t\}}{\mathbb{E}X^2\indi\{t\leq |X|\leq  2t\}}\leq C_2t^{-1}$. We also have $\mathbb{E}X^2\indi\{t\leq |X|\leq  2t\}\geq t^2\mathbb{P}(t\leq |X|\leq  2t)\geq C_3te^{-t^2/2}$ for $t\geq 1$ and $\mathbb{E}X^2\indi\{t\leq |X|\leq  2t\}\geq C_4t\geq C_4te^{-t^2/2}$ for $t<1$. Therefore, a sufficient condition for (\ref{eq:thm1dcrude}) is
$r\wedge \frac{\sigma}{2t}\geq C_5\frac{\sigma}{t} \left(\epsilon+ \frac{1}{\sqrt{n}e^{-t^2/2}}\right)$, which is implied by taking $r=C_5\frac{\sigma}{t} \left(\epsilon+ \frac{1}{\sqrt{n}e^{-t^2/2}}\right)$ under the condition that $\epsilon+ \frac{1}{\sqrt{n}e^{-t^2/2}}$ is sufficiently small. With the same choice of $r$, it can be shown that $G_n(\beta-r)<0$ holds with high probability by the same argument. This completes the proof.

\subsubsection{Proof of \Cref{thm:info-upper}}

For any $v\in \mathcal{S}^{\dimension-1}$, 
we write $\mathcal{D}_v(\widetilde{\beta},\mathbb{P},t) = \mathbb{P}\left(v^\top X(y-X^\top\widetilde{\beta})\geq 0, |v^\top X|\geq t\right)$, so that $\mathcal{D}(\widetilde{\beta},\mathbb{P},t) = \inf_{v\in \mathcal{S}^{\dimension-1}}\mathcal{D}_v(\widetilde{\beta},\mathbb{P},t)$. 
We also write $(X,y)\sim P_{\beta}$ for the sampling process $X\sim \mathcal{N}(0,I_\dimension)$ and $y\mid X\sim \mathcal{N}(X^\top\beta,\sigma^2)$ by dropping the dependence on $\sigma$ for notational simplicity. Note that the marginal distributions of $X$ under $\mathbb{P}$ and $P_{\beta}$ are the same, but $y \mid X$ follows (\ref{eq:reg-huber}) under $\mathbb{P}$. Then, 
we have the following bound for an arbitrary $\widetilde{\beta}$:
\begin{eqnarray*}
\nonumber && |\mathcal{D}(\wt{\beta},\mathbb{P},t)-\mathcal{D}(\wt{\beta},P_{\beta},t)| \\
\nonumber &\leq& \sup_{v\in \mathcal{S}^{\dimension-1}}\left|\mathcal{D}_v(\wt{\beta},\mathbb{P},t)-\mathcal{D}_v(\wt{\beta},P_{\beta},t)\right| \\
\nonumber &=& \sup_{v\in \mathcal{S}^{\dimension-1}}\left|\mathbb{E}\left[\left(\mathbb{P}\left(v^\top X(y-X^\top\wt{\beta})\geq 0 \mid X\right)-P_{\beta}\left(v^\top X(y-X^\top\wt{\beta})\geq 0 \mid X\right)\right)\indi\{|v^\top X|\geq t\}\right]\right| \\
\nonumber &\leq& \epsilon \sup_{v\in \mathcal{S}^{\dimension-1}}\mathbb{P}\left(|v^\top X|\geq t\right) \\
&=& 2\epsilon(1-\Phi(t)),
\end{eqnarray*}
where $\Phi(\cdot)$ stands for the CDF of $\mathcal{N}(0,1)$. Maximizing over $\wt{\beta}$ gives
\begin{equation}
\sup_{\wt{\beta} \in \R^\dimension}|\mathcal{D}(\wt{\beta},\mathbb{P},t)-\mathcal{D}(\wt{\beta},P_{\beta},t)| \leq 2\epsilon(1-\Phi(t)). \label{eq:perturb-bd}
\end{equation}
A standard VC-dimension calculation (see Section 6.1 of \cite{Gao20}) gives
\begin{equation}
\sup_{\wt{\beta} \in \R^\dimension}|\mathcal{D}(\wt{\beta},\mathbb{P},t)-\mathcal{D}(\wt{\beta},\mathbb{P}_n,t)| \leq C\sqrt{\frac{\dimension}{n}}, \label{eq:vc-bd}
\end{equation}
with high probability. Moreover, we claim that
\begin{equation}
\sup_{\wt{\beta} \in \R^\dimension}\mathcal{D}(\wt{\beta},P_{\beta},t)=\mathcal{D}(\beta,P_{\beta},t)=1-\Phi(t). \label{eq:max-depth}
\end{equation}
To see why (\ref{eq:max-depth}) is true, we first note that $\sup_{\wt{\beta} \in \R^\dimension}\mathcal{D}(\wt{\beta},P_{\beta},t)\geq \mathcal{D}(\beta,P_{\beta},t)=1-\Phi(t)$ is straightforward. Additionally, we also have $\sup_{\wt{\beta} \in \R^\dimension}\mathcal{D}(\wt{\beta},P_{\beta},t)\leq \sup_{\wt{\beta} \in \R^\dimension}\left(\frac{1}{2}\mathcal{D}_v(\wt{\beta},P_{\beta},t)+\frac{1}{2}\mathcal{D}_{-v}(\wt{\beta},P_{\beta},t)\right)=1-\Phi(t)$, which leads to (\ref{eq:max-depth}). Combining (\ref{eq:perturb-bd}) and (\ref{eq:vc-bd}),  and using the definition of $\wh{\beta}$, we have
\begin{eqnarray*}
\mathcal{D}(\wh{\beta},P_{\beta},t) &\geq& \mathcal{D}(\wh{\beta},\mathbb{P},t) - 2\epsilon(1-\Phi(t)) \\
&\geq& \mathcal{D}(\wh{\beta},\mathbb{P}_n,t) - 2\epsilon(1-\Phi(t)) - C\sqrt{\frac{\dimension}{n}} \\
&\geq&  \mathcal{D}(\beta,\mathbb{P}_n,t) - 2\epsilon(1-\Phi(t)) - C\sqrt{\frac{\dimension}{n}} \\
&\geq& \mathcal{D}(\beta,P_{\beta},t) - 4\epsilon(1-\Phi(t)) - 2C\sqrt{\frac{\dimension}{n}}.
\end{eqnarray*}
Together with (\ref{eq:max-depth}), we have
\begin{equation}
1-\Phi(t) - \mathcal{D}(\wh{\beta},P_{\beta},t) \leq 4\epsilon(1-\Phi(t)) + 2C\sqrt{\frac{\dimension}{n}}. \label{eq:before-curv}
\end{equation}
For any $\wt{\beta}$, we have
\begin{eqnarray}
\nonumber && 1-\Phi(t) - \mathcal{D}(\wt{\beta},P_{\beta},t) \\
\nonumber &=& \sup_{v\in \mathcal{S}^{\dimension-1}}\left(1-\Phi(t)-\mathcal{D}_v(\wt{\beta},P_{\beta},t)\right) \\
\nonumber &=& \sup_{v\in \mathcal{S}^{\dimension-1}}\left(1-\Phi(t)-\mathbb{E}\left[\Phi\left(\frac{v^\top XX^\top(\beta-\wt{\beta})}{\sigma|v^\top X|}\right)\indi\{|v^\top X|\geq t\}\right]\right) \\
\label{eq:take-u} &\geq& \mathbb{E}\left(\Phi(\|\wt{\beta}-\beta\|_2|Z|/\sigma)\indi\{|Z|>t\}\right)-(1-\Phi(t)) \\
\nonumber &=& g(\|\wt{\beta}-\beta\|_2/\sigma)-g(0),
\end{eqnarray}
where $g(\delta)=\mathbb{E}\left(\Phi(\delta|Z|)\indi\{|Z|\geq t\}\right)$ with $Z\sim \mathcal{N}(0,1)$, and the inequality (\ref{eq:take-u}) is by taking $v=\frac{\wt{\beta}-\beta}{\|\wt{\beta}-\beta\|_2}$.
Therefore, (\ref{eq:before-curv}) further implies
\begin{equation}
g(\|\wh{\beta}-\beta\|_2/\sigma) - g(0) \leq 4\epsilon(1-\Phi(t)) + 2C\sqrt{\frac{\dimension}{n}}. \label{eq:before-curv2}
\end{equation}
To lower bound $g(\delta)-g(0)$, we first compute the derivative
$$g'(\delta)=\mathbb{E}\left(|Z|\phi(\delta|Z|)\indi\{|Z|\geq t\}\right)\geq \mathbb{E}\left(|Z|\phi(\delta|Z|)\indi\{2t>|Z|\geq t\}\right)\geq t\phi(1)\mathbb{P}\left(2t>|Z|\geq t\right),$$
whenever $\delta\leq (2t)^{-1}$. By the monotonicity of $g(\delta)$, we have
$$g(\delta)-g(0)\geq \left(\delta\wedge\frac{1}{2t}\right)t\phi(1)\mathbb{P}\left(2t>|Z|\geq t\right).$$
Together with (\ref{eq:before-curv2}), we have
\begin{equation}
\frac{\|\wh{\beta}-\beta\|_2}{\sigma}\wedge \frac{1}{2t}\leq \frac{4\epsilon(1-\Phi(t)) + 2C\sqrt{\frac{\dimension}{n}}}{t\phi(1)\mathbb{P}\left(2t>|Z|\geq t\right)} \leq \frac{C_1}{t}\left(\epsilon+\sqrt{\frac{\dimension}{n}}e^{t^2}\right), \label{eq:usedlatertail}
\end{equation}
where we have used $\frac{(1-\Phi(t))}{\mathbb{P}\left(2t>|Z|\geq t\right)}\leq C_2$ and $\mathbb{P}\left(2t>|Z|\geq t\right)\geq C_3e^{-t^2}$ for all $t>0$. We therefore obtain the desired bound when $\epsilon+\sqrt{\frac{\dimension}{n}}e^{t^2}$ is sufficiently small.

\subsubsection{Proof of \Cref{thm:md-reg-hd}}

To prove \Cref{thm:md-reg-hd}, we need the following lemma whose proof will be given in the end of the section.
\begin{Lemma}\label{lem:con-in-medreg}
Consider independent $X_1,\cdots,X_n\sim \mathcal{N}(0,I_\dimension)$ and $z_1,\cdots,z_n\sim \mathcal{N}(0,\sigma^2)$. There exists some constant $C>0$, such that
\begin{eqnarray*}
\sup_{u\in \mathcal{S}^{\dimension-1}}\frac{1}{n}\sum_{i=1}^n\left(\mathbb{E}|X_i^\top u|-|X_i^\top u|\right) &\leq& C\sqrt{\frac{\dimension}{n}}, \\
\sup_{u\in \mathcal{S}^{\dimension-1}}\frac{1}{n}\sum_{i=1}^n\left(\mathbb{E}\left(|z_i-rX_i^\top u|-|z_i|\right)-|z_i-rX_i^\top u|+|z_i|\right) &\leq& Cr\sqrt{\frac{\dimension}{n}}.
\end{eqnarray*}
hold with high probability for any $\sigma,r>0$.
\end{Lemma}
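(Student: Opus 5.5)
Both displays are uniform deviation bounds for empirical processes indexed by $u\in\mathcal{S}^{\dimension-1}$, and I would prove each by the same route: symmetrization, a contraction step that strips away the absolute value and the $z_i$, and a bound on the resulting Gaussian supremum. Because the claim is only ``with high probability'' for a fixed confidence level $\alpha$, it is enough to bound the expectation of each supremum by $C\sqrt{\dimension/n}$ (resp.\ $Cr\sqrt{\dimension/n}$) and then apply Markov's inequality. Alternatively, bounded differences or Gaussian concentration would give an exponential tail.

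\textbf{First display.} Let $\xi_1,\dots,\xi_n$ be Rademacher signs independent of the data. Standard symmetrization gives
\begin{equation*}
\mathbb{E}\sup_{u}\frac1n\sum_{i=1}^n\left(\mathbb{E}|X_i^\top u|-|X_i^\top u|\right)\leq 2\,\mathbb{E}\sup_u\frac1n\sum_{i=1}^n\xi_i|X_i^\top u|.
\end{equation*}
The map $x\mapsto|x|$ is $1$-Lipschitz and vanishes at $0$. By the Ledoux--Talagrand contraction inequality, the right-hand side is therefore at most $4\,\mathbb{E}\sup_u\frac1n\sum_i\xi_iX_i^\top u$. This supremum equals $4\,\mathbb{E}\|\frac1n\sum_i\xi_iX_i\|_2$. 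Since $\frac1n\sum_i\xi_iX_i\sim\mathcal{N}(0,I_\dimension/n)$, its expected norm is at most $\sqrt{\dimension/n}$, which gives the first bound.

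\textbf{Second display.} Define $h_i(s)=|z_i-s|-|z_i|$. Each $h_i$ is $1$-Lipschitz in $s$ and satisfies $h_i(0)=0$. Symmetrize in the same way. Then, conditionally on $\{(X_i,z_i)\}_{i=1}^n$, apply the contraction principle in its version that allows a different contraction $h_i$ for each coordinate, evaluated at $s_i=rX_i^\top u$. This yields
\begin{equation*}
\mathbb{E}\sup_u\frac1n\sum_{i=1}^n\xi_ih_i(rX_i^\top u)\leq 2r\,\mathbb{E}\sup_u\frac1n\sum_{i=1}^n\xi_iX_i^\top u\leq 2r\sqrt{\dimension/n}.
\end{equation*}
Together with the symmetrization factor, this gives the second bound with $C$ independent of $\sigma$ and $r$. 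The scale invariance has a clear source: $|h_i|\leq r|X_i^\top u|$, so the bound never involves $z_i$ and $\sigma$ drops out entirely.

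\textbf{Main obstacle.} The main point to verify is the legitimacy of the contraction step with index-dependent contractions $h_i$ that depend on the random $z_i$. The contraction inequality requires only that, conditionally, each $h_i$ be a fixed $1$-Lipschitz function vanishing at $0$. Conditioning on $(X_i,z_i)$ makes this hold, and the conditional Rademacher average is exactly the quantity being bounded. The function class is uncountable, but it is continuous in $u$, so measurability of the supremum is harmless.
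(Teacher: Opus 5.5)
Your proof is correct in substance and shares its core with the paper's. Both use symmetrization followed by the Ledoux--Talagrand contraction (\Cref{lem:contraction}), applied conditionally on the data with the coordinate-dependent contractions $s\mapsto|z_i-s|-|z_i|$. The paper divides these by $r$, which is equivalent.

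The difference is in how the tail bound is obtained. The paper uses exponential moments throughout:
\begin{itemize}
\item it starts from $\mathbb{P}(Z>t)\le e^{-\lambda t}\mathbb{E}e^{\lambda Z}$;
\item it symmetrizes and contracts inside the exponential, taking $F(x)=e^{4\lambda x}$ in \Cref{lem:contraction};
\item it passes to a $1/2$-net of the sphere of size $6^p$ and applies the Gaussian moment generating function with a union bound, ending with $2\exp(-nt^2/128+p\log 6)$.
\end{itemize}
You instead take $F(x)=x$ and compute the Gaussian supremum exactly as $\mathbb{E}\|\frac1n\sum_i\xi_iX_i\|_2\le\sqrt{p/n}$, so no net is needed. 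You then finish with Markov. Your route is shorter. The price is a constant $C\asymp 1/\alpha$ at failure probability $\alpha$, whereas the paper's tail decays like $e^{-cC^2p}$, i.e.\ $C\asymp\sqrt{\log(1/\alpha)}$. Either suffices for \Cref{thm:md-reg-hd}, which only asks for probability $1-\alpha$ with $\alpha$-dependent constants.

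Two small repairs are needed.

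First, Markov's inequality needs a nonnegative variable, and your suprema can be negative. For $p=1$ the first one equals $\sqrt{2/\pi}-\frac1n\sum_i|X_i|$, which has mean zero. So a bound on $\mathbb{E}\sup_u(\cdot)$ says nothing by itself about $\mathbb{P}(\sup_u(\cdot)>a)$. Run the same argument on $\mathbb{E}\sup_u|\cdot|$ instead. Symmetrization holds with absolute values, and \Cref{lem:contraction} is already stated in the $\sup|\cdot|$ form, so your constants do not change.

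Second, your parenthetical alternative via Gaussian concentration is fine for the first display but does not give the right scale for the second. As a function of the $z_i=\sigma g_i$, that supremum is only $2\sigma/\sqrt n$-Lipschitz. The resulting fluctuation bound is therefore of order $\sigma/\sqrt n$ rather than $r\sqrt{p/n}$. This is too weak when $r\sqrt p\ll\sigma$, e.g.\ for fixed $p$, $\epsilon=0$ and $n\to\infty$ in \Cref{thm:md-reg-hd}. If you want exponential tails, keep the contraction inside $F(x)=e^{\lambda x}$ as the paper does, so that the $z_i$ never enter.
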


\begin{proof}[Proof of \Cref{thm:md-reg-hd}]
We write the objective function as $L_n(\beta)=\frac{1}{n}\sum_{i=1}^n|y_i-X_i^\top\beta|$. In order to show that $\|\wh{\beta}-\beta\|_2\leq r$, it suffices to show $\inf_{\wt{\beta}: \|\wt{\beta}-\beta\|_2\geq r}\left(L_n(\wt{\beta})-L_n(\beta)\right)>0$. By convexity, we only need to show $\inf_{u\in \mathcal{S}^{\dimension-1}}\left(L_n(\beta+ru)-L_n(\beta)\right)>0$. We note that the data generating process of \Cref{model:adaptive} can be described as first sampling $X_i\sim \mathcal{N}(0,I_\dimension)$ and $\gamma_i\sim\text{Bernoulli}(\epsilon)$, and then sample the response according to $y_i|(X_i,\gamma_i=0)\sim \mathcal{N}(X_i^\top\beta,\sigma^2)$ or $y_i|(X_i,\gamma_i=1)\sim Q_{X_i}$. Writing $z_i=y_i-X_i^\top\beta$, we have $z_i|(X_i,\gamma_i=0)\sim \mathcal{N}(0,\sigma^2)$. We also define $\mathcal{I}=\{i\in[n]:\gamma_i=0\}$ and $\mathcal{O}=[n]\backslash\mathcal{I}$. Then, for any $u\in \mathcal{S}^{\dimension-1}$, we have
\begin{eqnarray}
\nonumber L_n(\beta+ru)-L_n(\beta) &=& \frac{1}{n}\sum_{i=1}^n\left(|z_i-rX_i^\top u|-|z_i|\right) \\
\nonumber &=& \frac{1}{n}\sum_{i\in\mathcal{I}}\left(|z_i-rX_i^\top u|-|z_i|\right) + \frac{1}{n}\sum_{i\in\mathcal{O}}\left(|z_i-rX_i^\top u|-|z_i|\right) \\
\label{eq:pf-reg-med-1} &\geq& \frac{1}{n}\sum_{i\in\mathcal{I}}\mathbb{E}\left(|z_i-rX_i^\top u|-|z_i|\right) \\
\label{eq:pf-reg-med-2} && - \sup_{u\in \mathcal{S}^{\dimension-1}}\frac{1}{n}\sum_{i\in\mathcal{I}}\left(\mathbb{E}\left(|z_i-rX_i^\top u|-|z_i|\right)-|z_i-rX_i^\top u|+|z_i|\right) \\
\label{eq:pf-reg-med-3} && - r\frac{1}{n}\sum_{i\in\mathcal{O}}\mathbb{E}|X_i^\top u| - r\sup_{u\in \mathcal{S}^{\dimension-1}}\frac{1}{n}\sum_{i\in\mathcal{O}}\left(\mathbb{E}|X_i^\top u|-|X_i^\top u|\right).
\end{eqnarray}
We can write (\ref{eq:pf-reg-med-1}) as $\frac{|\mathcal{I}|}{n}f(r)$ with $f(r)=\mathbb{E}\left(|z_i-rX_i^\top u|-|z_i|\right)$. Directly calculation gives $f(0)=f'(0)=0$ and $f''(r)=2\sigma^{-1}\mathbb{E}Z^2\phi(rZ/\sigma)\geq 2\sigma^{-1}\phi(1)\mathbb{E}Z^2\indi\{|Z|\leq 1\}$ for $r\leq \sigma$ with $Z\sim \mathcal{N}(0,1)$. Therefore, $f(r)\geq C_1\frac{r^2\wedge \sigma^2}{\sigma}$ with $C_1=\phi(1)\mathbb{E}Z^2\indi\{|Z|\leq 1\}$ being a constant. Applying \Cref{lem:con-in-medreg}, we can bound the magnitudes of (\ref{eq:pf-reg-med-2}) and (\ref{eq:pf-reg-med-3}) by $C_2\frac{r}{n}\sqrt{d|\mathcal{I}|}$ and $C_3\left(\frac{r|\mathcal{O}|}{n}+\frac{r}{n}\sqrt{d|\mathcal{O}|}\right)$ with high probability, respectively. Therefore, $\inf_{u\in \mathcal{S}^{\dimension-1}}\left(L_n(\beta+ru)-L_n(\beta)\right)>0$ holds whenever
\begin{equation}
C_1\frac{|\mathcal{I}|}{n}\frac{r^2\wedge \sigma^2}{\sigma} > C_2\frac{r}{n}\sqrt{d|\mathcal{I}|} + C_3\left(\frac{r|\mathcal{O}|}{n}+\frac{r}{n}\sqrt{d|\mathcal{O}|}\right). \label{eq:pf-reg-med-4}
\end{equation}
Since $|\mathcal{O}|\sim\text{Binomial}(n,\epsilon)$, we have $|\mathcal{O}|\leq C_4 n\epsilon$ with high probability. Thus, (\ref{eq:pf-reg-med-4}) is implied by $r^2\wedge \sigma^2> C_5r\sigma\left(\sqrt{\frac{\dimension}{n}}+\epsilon\right)$, which holds by taking $r=C_5\sigma\left(\sqrt{\frac{\dimension}{n}}+\epsilon\right)$ under the condition that $\sqrt{\frac{\dimension}{n}}+\epsilon\leq c$.
\end{proof}

To prove \Cref{lem:con-in-medreg}, we need the following result, which is Theorem 4.12 of \cite{ledoux2013probability}.
\begin{Lemma}\label{lem:contraction}
Let $F:\mathbb{R}_+\rightarrow\mathbb{R}_+$ be convex and increasing. Let $\psi_i:\mathbb{R}\rightarrow\mathbb{R}$, $i=1,\cdots,n$, be contractions such that $\psi_i(0)=0$. Then, for any bounded subset $T\subset\mathbb{R}^n$,
$$\mathbb{E}F\left(\frac{1}{2}\sup_{t\in T}\left|\sum_{i=1}^n\delta_i\psi_i(t_i)\right|\right)\leq \mathbb{E}F\left(\sup_{t\in T}\left|\sum_{i=1}^n\delta_i t_i\right|\right),$$
where $\delta_1,\cdots,\delta_n$ are i.i.d. Rademacher random variables.
\end{Lemma}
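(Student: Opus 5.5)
The plan is to follow the classical Ledoux--Talagrand comparison argument. It has two stages. Stage one proves a version without absolute values and without the factor $\frac12$. Stage two recovers the stated form by symmetrization.

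\emph{Stage one.} For any convex increasing $G:\mathbb{R}\to\mathbb{R}$ and bounded $T$, I would show
\[
\mathbb{E}\,G\Bigl(\sup_{t\in T}\sum_{i=1}^n\delta_i\psi_i(t_i)\Bigr)\leq \mathbb{E}\,G\Bigl(\sup_{t\in T}\sum_{i=1}^n\delta_i t_i\bigr)\Bigr).
\]
The proof replaces one contraction at a time. Condition on $\delta_2,\dots,\delta_n$ and absorb the remaining terms into a function $A(t)$. It then suffices to prove the single-coordinate inequality
\[
\mathbb{E}_{\delta_1}G\Bigl(\sup_t \bigl(A(t)+\delta_1\psi(t_1)\bigr)\Bigr)\leq \mathbb{E}_{\delta_1}G\Bigl(\sup_t \bigl(A(t)+\delta_1 t_1\bigr)\Bigr).
\]
The full statement then follows by induction over $i$, applying Fubini at each step.

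For the single-coordinate inequality, fix near-maximizers $t$ (for the $+$ sign) and $s$ (for the $-$ sign) on the left-hand side. It is enough to show
\[
G\bigl(A(t)+\psi(t_1)\bigr)+G\bigl(A(s)-\psi(s_1)\bigr)\leq \max\Bigl\{G\bigl(A(t)+t_1\bigr)+G\bigl(A(s)-s_1\bigr),\;G\bigl(A(s)+s_1\bigr)+G\bigl(A(t)-t_1\bigr)\Bigr\}.
\]
This is the main obstacle: a case analysis on the signs of $t_1,s_1$ and on which of $A(t)+\psi(t_1)$, $A(s)-\psi(s_1)$ is larger. Two tools drive every case.
\begin{itemize}
\item Monotonicity of $G$, together with $|\psi(x)|\le|x|$, handles the easy cases.
\item In the hard case (say $t_1\ge0$, $s_1\le0$), set $a=A(s)-\psi(s_1)$, $b=A(s)-s_1$ and $a'=A(t)+\psi(t_1)$, $b'=A(t)+t_1$. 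Contraction gives $a'-a\le b'-b$. Convexity of $G$ means its increments $G(x+h)-G(x)$ are nondecreasing in $x$. Combining these yields $G(a')-G(a)\le G(b')-G(b)$.
\end{itemize}
The remaining cases follow by symmetry ($t\leftrightarrow s$ and sign flips).

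\emph{Stage two.} Write $Z_\psi(t)=\sum_i\delta_i\psi_i(t_i)$ and $Z(t)=\sum_i\delta_i t_i$. Since $|x|=x_++x_-$,
\[
\sup_t|Z_\psi(t)|\le \sup_t (Z_\psi(t))_+ + \sup_t(-Z_\psi(t))_+ .
\]
By convexity and monotonicity of $F$,
\[
F\Bigl(\tfrac12\sup_t|Z_\psi|\Bigr)\le \tfrac12F\Bigl(\sup_t (Z_\psi)_+\Bigr)+\tfrac12F\Bigl(\sup_t(-Z_\psi)_+\Bigr).
\]
Because $-\delta$ has the same law as $\delta$, the two expectations on the right are equal. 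So it suffices to bound $\mathbb{E}F(\sup_t(Z_\psi)_+)$.

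Note that $\sup_t (Z_\psi)_+=\sup_{t\in T\cup\{0\}}Z_\psi(t)$ whenever this quantity is nonnegative, and $\psi_i(0)=0$ ensures that adding the point $0$ to $T$ is compatible with the contractions. Apply stage one with $G(x)=F(x_+)$, which is convex and increasing on $\mathbb{R}$, and index set $T\cup\{0\}$. This gives
\[
\mathbb{E}F\Bigl(\sup_t (Z_\psi)_+\Bigr)\le\mathbb{E}F\Bigl(\sup_{T\cup\{0\}}Z\Bigr)_+\le \mathbb{E}F\Bigl(\sup_{t\in T}|Z(t)|\Bigr),
\]
which is the claim. Boundedness of $T$ guarantees that all suprema are finite. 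Measurability issues can be avoided by first proving the result for finite $T$ and passing to the limit by monotone convergence.
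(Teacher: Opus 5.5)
The paper gives no proof of this lemma; it cites Ledoux--Talagrand, Theorem~4.12, whose argument has exactly your two-stage structure. Your stage two, the conditioning/induction step, and the reduction to a pairwise inequality at maximizers are all fine. The gap is in the pairwise inequality itself, which carries the whole content of stage one: your case analysis is inverted. If $t_1\ge 0\ge s_1$, then $\psi(t_1)\le t_1$ and $-\psi(s_1)\le -s_1$. So the left side is at most $G(A(t)+t_1)+G(A(s)-s_1)$ by monotonicity alone, and this is an easy case. The inequality $a'-a\le b'-b$ you assert there is equivalent to $\psi(t_1)+\psi(s_1)\le t_1+s_1$, which is false (take $\psi(x)=|x|$ and $s_1<0$). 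The case that actually needs convexity is when $t_1,s_1$ have the \emph{same} sign, and it cannot be obtained ``by symmetry''. With $T$ a single point and $\psi\equiv 0$, the single-coordinate inequality reads $G(A)\le\frac12\bigl(G(A+t_1)+G(A-t_1)\bigr)$, which fails for strictly concave increasing $G$. The opposite-sign case, by contrast, holds for every increasing $G$.

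Moreover, in the same-sign case the pairwise inequality is false for an arbitrary pair. So the argument must use that $t$ maximizes $A(u)+\psi(u_1)$ and $s$ maximizes $A(u)-\psi(u_1)$, and your proposed split (comparing $A(t)+\psi(t_1)$ with $A(s)-\psi(s_1)$) never invokes this. For example, take $G(x)=e^x$, $\psi(x)=\min(x_+,(2-x)_+)$, $A(t)=10$, $A(s)=0$, $t_1=1$, $s_1=2$. The left side is $e^{11}+1$, while the two candidate bounds are $e^{11}+e^{-2}$ and $e^{2}+e^{9}$.

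The missing ingredient is the pair of orderings $A(t)+\psi(t_1)\ge A(s)+\psi(s_1)$ and $A(s)-\psi(s_1)\ge A(t)-\psi(t_1)$. These hold exactly once you work with finite $T$ (as you already do) and take exact maximizers. Then, for $0\le s_1\le t_1$, set $x=A(s)-s_1$, $h=s_1-\psi(s_1)$, $y=A(t)+\psi(t_1)$, $k=t_1-\psi(t_1)$.
\begin{itemize}
\item The contraction property gives $0\le h\le k$.
\item $x\le A(s)+\psi(s_1)\le y$, by $|\psi(s_1)|\le s_1$ and the first ordering.
\item Convexity and monotonicity then give $G(x+h)-G(x)\le G(y+h)-G(y)\le G(y+k)-G(y)$, which is exactly the bound by $G(A(t)+t_1)+G(A(s)-s_1)$.
\end{itemize}
For $0\le t_1\le s_1$, use the mirror argument with $x=A(t)-t_1$, $h=t_1+\psi(t_1)$, $y=A(s)-\psi(s_1)$, $k=s_1+\psi(s_1)$ and the second ordering; this gives the bound by $G(A(s)+s_1)+G(A(t)-t_1)$. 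The case $t_1,s_1\le 0$ then follows by reflecting the replaced coordinate ($\psi\mapsto-\psi(-\,\cdot)$, swapping the roles of $t$ and $s$).
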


\begin{proof}[Proof of \Cref{lem:con-in-medreg}]
For any $t,\lambda>0$, we have
\begin{eqnarray}
\nonumber \mathbb{P}\left(\sup_{u\in \mathcal{S}^{\dimension-1}}\frac{1}{n}\sum_{i=1}^n\left(\mathbb{E}|X_i^\top u|-|X_i^\top u|\right)>t\right) &\leq& e^{-\lambda t}\mathbb{E}\exp\left(\lambda \sup_{u\in \mathcal{S}^{\dimension-1}}\frac{1}{n}\sum_{i=1}^n\left(\mathbb{E}|X_i^\top u|-|X_i^\top u|\right)\right) \\
\label{eq:lmpf-sym} &\leq& e^{-\lambda t}\mathbb{E}\exp\left(2\lambda \sup_{u\in \mathcal{S}^{\dimension-1}}\frac{1}{n}\sum_{i=1}^n\delta_i|X_i^\top u|\right) \\
\label{eq:lmpf-contra} &\leq& e^{-\lambda t}\mathbb{E}\exp\left(4\lambda  \sup_{u\in \mathcal{S}^{\dimension-1}}\left| \frac{1}{n}\sum_{i=1}^n\delta_i X_i^\top u \right| \right) \\
\label{eq:lmpf-u} &\leq&  e^{-\lambda t}\mathbb{E}\exp\left(8\lambda  \max_{u\in \mathcal{U}}\left| \frac{1}{n}\sum_{i=1}^n\delta_i X_i^\top u \right| \right)\\
\nonumber &\leq& e^{-\lambda t}\sum_{u\in \mathcal{U}}\mathbb{E}\exp\left(8\lambda  \left| \frac{1}{n}\sum_{i=1}^n\delta_i X_i^\top u \right| \right)\\
\nonumber &\leq& 2\exp\left(\frac{32\lambda^2}{n}-\lambda t+\log|\mathcal{U}|\right) \\
\nonumber &\leq& 2\exp\left(-\frac{nt^2}{128}+\dimension\log 6\right).
\end{eqnarray}
The inequality (\ref{eq:lmpf-sym}) is by symmetrization with $\delta_i$'s being independent Rademacher random variables, and the bound \eqref{eq:lmpf-contra} is by \Cref{lem:contraction}. The set $\mathcal{U}$ in (\ref{eq:lmpf-u}) is a $1/2$-cover of $\mathcal{S}^{\dimension-1}$. A standard argument leads to the bound $\sup_{u\in \mathcal{S}^{\dimension-1}}\left| \frac{1}{n}\sum_{i=1}^n\delta_i X_i^\top u \right|\leq 2\max_{u\in \mathcal{U}}\left| \frac{1}{n}\sum_{i=1}^n\delta_i X_i^\top u \right|$ and $|\mathcal{U}|\leq 6^\dimension$. The last inequality above is by taking $\lambda=nt/64$. Finally, the bound will be sufficiently small by taking $t=C\sqrt{\frac{\dimension}{n}}$, which completes the proof of the first inequality.

For the second inequality, we have
\begin{eqnarray*}
&& \mathbb{P}\left(\sup_{u\in \mathcal{S}^{\dimension-1}}\frac{1}{n}\sum_{i=1}^n\left(\mathbb{E}\left(|z_i-rX_i^\top u|-|z_i|\right)-|z_i-rX_i^\top u|+|z_i|\right) > rt\right) \\
&\leq& e^{-\lambda t}\mathbb{E}\exp\left(2\lambda\sup_{u\in \mathcal{S}^{\dimension-1}}\frac{1}{n}\sum_{i=1}^n\delta_i\left(|z_i-rX_i^\top u|-|z_i|\right)/r \right) \\
&\leq& e^{-\lambda t}\mathbb{E}\exp\left(4\lambda  \sup_{u\in \mathcal{S}^{\dimension-1}}\left| \frac{1}{n}\sum_{i=1}^n\delta_i X_i^\top u \right| \right),
\end{eqnarray*}
where we have used symmetrization and \Cref{lem:contraction} with contraction $\psi_i(t_i)=\left(|z_i-rt_i|-|z_i|\right)/r$.
By following the arguments after (\ref{eq:lmpf-contra}), we obtain the desired result.
\end{proof}

\subsection{Proofs of Lower Bound Results}

This section presents proofs of \Cref{lem:matching}, \Cref{lem:tv-gaussian} and \Cref{thm:info-lower}.

\begin{proof}[Proof of \Cref{lem:matching}]
We first assume the equality $\TV(P_1,\cdots,P_m)=\frac{\epsilon}{1-\epsilon}$.
Let $p_1,\cdots,p_m$ be density functions of $P_1,\cdots,P_m$ with respect to some common dominating measure. Define $B_j=\{p_j=\max_{1\leq k\leq m}p_k\}$ for $j=1,\cdots,m$. We set $A_1=B_1$ and $A_j=B_j\backslash (B_1\cup\cdots \cup B_{j-1})$ for $j=2,\cdots,m$. Then, for each $j=1,\cdots,m$, we construct $q_j=\frac{1-\epsilon}{\epsilon}\sum_{k=1}^m(p_k-p_j)\indi_{A_k}$. By its definition, we know that $q_j\geq 0$ and
$$\int q_j= \frac{1-\epsilon}{\epsilon}\left(\sum_{k=1}^m\int_{A_k}p_k-1\right)=\frac{1-\epsilon}{\epsilon}\TV(P_1,\cdots,P_m)=1.$$
This implies $q_j$ is a density. Moreover, since $(1-\epsilon)p_j+\epsilon q_j=\sum_{k=1}^mp_k\indi_{A_k}$ holds for all $j=1,\cdots,m$, the corresponding distributions $Q_1,\cdots,Q_m$ satisfy the desired property.

Now let us consider the more general $\TV(P_1,\cdots,P_m)\leq \frac{\epsilon}{1-\epsilon}$. There exists some $\delta\in[0,\epsilon]$ such that $\TV(P_1,\cdots,P_m)=\frac{\delta}{1-\delta}$. By the previous arguments, there exist $\wt{Q}_1,\cdots,\wt{Q}_m$ such that $(1-\delta)P_j+\delta \wt{Q}_j$ does not depend on $j\in[m]$. Set $Q_j=\frac{\epsilon-\delta}{\epsilon}P_j+\frac{\delta}{\epsilon}\wt{Q}_j$, and we have $(1-\epsilon)P_j+\epsilon Q_j=(1-\delta)P_j+\delta \wt{Q}_j$ does not depend on $j\in[m]$. The proof is complete.
\end{proof}

\begin{proof}[Proof of \Cref{lem:tv-gaussian}]
It suffices to consider $\sigma=1$.
Without loss of generality, assume $\theta_1\leq\cdots\leq\theta_m$. Define
$$A_j=\begin{cases}
\left(-\infty,\frac{\theta_1+\theta_2}{2}\right] & j=1 \\
\left[\frac{\theta_{j-1}+\theta_j}{2},\frac{\theta_{j}+\theta_{j+1}}{2}\right] & j=2,\cdots,m-1 \\
\left[\frac{\theta_{m-1}+\theta_m}{2},\infty\right) & j=m.
\end{cases}$$
We use $p_j$ for the density function of $\mathcal{N}(\theta_j,1)$. Then,
\begin{eqnarray*}
&& \TV\left(\mathcal{N}(\theta_1,1),\cdots,\mathcal{N}(\theta_m,1)\right) \\
&\leq& \sum_{j=1}^m\int_{A_j}p_j-1 \\
&\leq& \frac{1}{2}+ \frac{1}{\sqrt{2\pi}}\frac{\theta_2-\theta_1}{2} + \sum_{j=2}^{m-1}\frac{1}{\sqrt{2\pi}}\frac{\theta_{j+1}-\theta_{j-1}}{2} + \frac{1}{\sqrt{2\pi}}\frac{\theta_m-\theta_{m-1}}{2} + \frac{1}{2} -1 \\
&= & \frac{\theta_m-\theta_1}{\sqrt{2\pi}}.
\end{eqnarray*}
This completes the proof.
\end{proof}

\begin{proof}[Proof of \Cref{thm:info-lower}]
The result follows the construction given in \Cref{sec:lower}. Specifically, we apply Fano's inequality (\ref{eq:fano}) and the Kullback--Leibler divergence bound (\ref{eq:kl-bd-lb}) with $\delta$ set as (\ref{eq:delta-rate-lb-pf}).
\end{proof}

\subsection{Proof of SQ Hardness}\label{sec:pf-SQ}
\newcommand{\anote}[1]{\footnote{{\bf [Ankit: {#1}\bf ]}}}

In this section, we present proofs of the technical results in \Cref{sec:SQ}. We will give formal proofs of \Cref{prop:existence-g-intro}, \Cref{lem:est-implies-testing}, \Cref{fact:Hermite}, and \Cref{lem:chi-squared-bd}, and then apply \Cref{lem:sq-hardness-conditional-ngca} to prove \Cref{thm:ThmFormalSQ}.

\subsubsection{Proof of \Cref{prop:existence-g-intro}}

In this proof, a vector in $\R^{m+1}$ will always be indexed by $[0:m]$. Consider the space $L^\infty:=L^\infty(\R)$ equipped with Lebesgue measure and the weak$^*$-topology $\sigma(L^\infty,L^1)$ using that $(L^1)^* = L^\infty$. We recall a few definitions in the proof sketch.
Let $\cA_i := \{r \in L^\infty: \|r\|_\infty \leq B_i\}$, where $\|\cdot\|_\infty$ is the essential supremum and $B_i=(Cm)^{i/2}$.
Define the linear operator $T:L^\infty \to \R^{m+1}$ that maps $r \in \cA_i$ to $(\int r(x) \phi(x) \he_i(x)dx)_{i=0}^m$.
This is a valid map because $\phi\he_i \in L^1$ and $r \in L^\infty$. Let $\cB_i := \{T(r): r \in \cA_k\} \subset \R^{m+1}$.

\Cref{prop:existence-g-intro} states that the vector $e_i \in \R^{m+1}$ belongs to $\cB_i$.
We shall prove this using LP duality, and for that purpose, we first show that the set $\cB_i$ is compact and convex.

\begin{Lemma} The set $\cB_i$ is compact and convex.
\end{Lemma}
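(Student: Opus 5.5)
Convexity is immediate: $\cA_i$ is a convex subset of $L^\infty$ (a ball in the sup norm) and $T$ is linear, so $\cB_i = T(\cA_i)$ is the linear image of a convex set and hence convex. Explicitly, if $w_1 = T(r_1)$ and $w_2 = T(r_2)$ with $r_1,r_2\in\cA_i$, then for $\lambda\in[0,1]$ we have $\lambda w_1+(1-\lambda)w_2 = T(\lambda r_1+(1-\lambda)r_2)$. Moreover $\|\lambda r_1+(1-\lambda)r_2\|_\infty\le B_i$.

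For compactness, the plan is the standard Banach--Alaoglu argument.

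\emph{Step 1.} View $L^\infty = (L^1)^*$. The closed ball $\cA_i=\{r:\|r\|_\infty\le B_i\}$ is then compact in the weak$^*$ topology $\sigma(L^\infty,L^1)$ by the Banach--Alaoglu theorem.

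\emph{Step 2.} Show that $T$ is continuous from $(L^\infty,\sigma(L^\infty,L^1))$ to $\R^{m+1}$ with the Euclidean topology. Each coordinate $r\mapsto \int r(x)\phi(x)\he_j(x)\,dx$ is evaluation of $r$ against the fixed function $\phi\he_j$. This function lies in $L^1$, since $|\he_j(x)|\le (C(1+|x|))^j$ by \Cref{fact:Hermite} and $\phi$ has Gaussian decay. By the definition of the weak$^*$ topology, each such evaluation is continuous. A map into the finite-dimensional space $\R^{m+1}$ is continuous if and only if each coordinate is, so $T$ is continuous.

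\emph{Step 3.} The continuous image of a compact set is compact, so $\cB_i=T(\cA_i)$ is compact in $\R^{m+1}$.

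The only point needing care is Step 2, namely checking that $\phi\he_j\in L^1$ so that the coordinate functionals are genuinely weak$^*$-continuous. If one prefers to avoid Banach--Alaoglu, there is an alternative. Take a sequence $w_n=T(r_n)\to w$ and extract a weak$^*$-convergent subsequence of $(r_n)$; this is possible by sequential Banach--Alaoglu, because $L^1(\R)$ is separable. The weak$^*$ limit stays in $\cA_i$, and passing to the limit coordinatewise gives $w\in\cB_i$, so $\cB_i$ is closed. Boundedness is clear: $|T(r)_j|\le B_i\,\E|\he_j(G)|\le B_i$. Together these give compactness.
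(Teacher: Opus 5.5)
Your proof is correct and follows the paper's argument: convexity from the linearity of $T$ on the convex ball $\cA_i$, and compactness from the weak$^*$ compactness of $\cA_i$ (Banach--Alaoglu) combined with the weak$^*$ continuity of $T$, which holds because each $\phi\he_j\in L^1$. Your sequential variant using the separability of $L^1(\R)$ and the explicit bound $|T(r)_j|\le B_i$ is also valid, but not needed.
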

\begin{proof}
The convexity of $\cB_i$ follows directly from the convexity of $\cA_i$ and linearity of $T$.
It remains to show the compactness of $\cB_i$. 
This would follow if $T$ is continuous and $\cA_i$ is compact.
As shown in \cite[Theorem 5.5]{rudin_functional_1996}, $\cA_i$ is weak-$^*$ compact in $L^\infty$ (as an application of  Banach–Alaoglu Theorem) and $T$ is weak-$*$ continuous, implying that $T(\cA_k)$ is compact.
\end{proof}
By the strict hyperplane separation theorem (see, for example, \cite[Section 2.5]{BoydVand04}), which is applicable because $\cB_i$ is both convex and compact,  
if $e_i \not \in \cB_i$, then there must exist a separating hyperplane $u \in \R^{m+1} $ such
that $ \min_{w \in \cB} \langle u, w\rangle >  \langle u,e_i\rangle$.
To establish feasibility of $e_i$, it thus suffices to show that, for any $u \in \R^{m+1}$, there exists $w \in \cB_k$ such that $\langle u,w\rangle \leq \langle u,e_i\rangle$.

For any $u \in \R^{m+1}$, define the associated polynomial $f(x) = \sum_{i=0}^m u_i\he_i(x)$.
Then, for any $r \in \cA_k$ and $w = T(r)$, we have that  $\langle u,w\rangle = \sum_{i=0}^m u_i \int r(x) \phi(x)\he_i(x)dx  = \E[f(G)r(G)]$ with $G\sim\cN(0,1)$.
Moreover, $\langle u,e_i\rangle = \E[f(G)\he_i(G)]$, where we use that Hermite polynomials are orthonormal under Gaussian measure (\Cref{fact:Hermite}).

Therefore, it is equivalent to show that for any polynomial $f$ of degree at most $m$, there exists an $r\in \cA_i$, such that $\E[r(G)f(G)] \leq \E[f(G)\he_i(G)]$.
We shall take $r(x)$ to be $-B_i\mathrm{sign}(f(x))$, which does belong to $\cA_i$, and then the desired inequality becomes: for all polynomials $f$ of degree at most $m$,
it holds that $B_i\E[|f(G)|] \geq \E[f(G)\he_i(G)]$. 
This is proved in the next result:

\begin{Lemma}
\label{lem:sup-poly}
There exists a constant $C > 0$ such that for all $i \in [m]$ for $m \in \N$:
$$\sup_{f: \mathrm{deg(f)}\leq m;\,\, f \neq 0}\frac{\E[f(G)\he_i(G)]}{\E[|f(G)|]} \leq  2\sup_{|y|\leq \sqrt{32m}}|\he_i(y)| \leq ( Cm)^{i/2}.$$
\end{Lemma}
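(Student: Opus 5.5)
Fix a nonzero polynomial $f$ of degree at most $m$ and let $G\sim\cN(0,1)$. The plan is to split $\E[f(G)\he_i(G)]$ according to whether $|G|$ is below or above a threshold $s=\sqrt{32m}$:
\[
\E[f(G)\he_i(G)] = \E[f(G)\he_i(G)\indi\{|G|\le s\}] + \E[f(G)\he_i(G)\indi\{|G|> s\}].
\]
The first term is at most $\sup_{|y|\le s}|\he_i(y)|\cdot\E[|f(G)|]$. For the second term it suffices to prove
\[
\bigl|\E[f(G)\he_i(G)\indi\{|G|>s\}]\bigr|\le \sup_{|y|\le s}|\he_i(y)|\cdot \E[|f(G)|].
\]
The factor $2$ in the statement absorbs both pieces.

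For the tail piece, apply Cauchy--Schwarz twice:
\[
\bigl|\E[f\he_i\indi\{|G|>s\}]\bigr|\le \|f\|_{L^4}\,\|\he_i\|_{L^4}\,\bbP(|G|>s)^{1/2}.
\]
Gaussian hypercontractivity gives, for a polynomial $h$ of degree $d$, the bounds $\|h\|_{L^4}\le 3^{d/2}\|h\|_{L^2}$ and $\|h\|_{L^2}\le e^{O(d)}\|h\|_{L^1}$. The second of these follows from the log-convexity (Hölder) interpolation $\|h\|_2\le \|h\|_1^{1/3}\|h\|_4^{2/3}$ combined with the $L^4$ bound, which yields $\|h\|_2\le 3^{d}\|h\|_1$. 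With these, $\|f\|_{L^4}\le 3^{3m/2}\E|f(G)|$ and $\|\he_i\|_{L^4}\le 3^{i/2}$. Meanwhile $\bbP(|G|>s)^{1/2}\le e^{-s^2/4}=e^{-8m}$. Since $27^{m/2}\,3^{i/2}\,e^{-8m}\le 1$ for $i\le m$, the tail piece is at most $\E|f(G)|$.

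It then suffices that $\sup_{|y|\le s}|\he_i(y)|\ge 1$. This holds because $\E[\he_i(G)^2]=1$ and the tail mass of $\he_i^2$ beyond $s$ is tiny by the same hypercontractive estimate. Alternatively, one can simply bound the tail piece by $\E|f|\le \max(1,\sup_{|y|\le s}|\he_i|)\E|f|$; either route gives the factor $2$.

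The second inequality is direct from \Cref{fact:Hermite}(4):
\[
\sup_{|y|\le\sqrt{32m}}|\he_i(y)|\le \bigl(C(1+\sqrt{32m})\bigr)^i\le (C'm)^{i}.
\]
This is weaker than $(Cm)^{i/2}$, so a sharper pointwise bound is needed, namely $|\he_i(y)|\le e^{y^2/4}$-type estimates, or more precisely $|\he_i(y)|\le (C(1+|y|^2)/\sqrt{i})^{i/2}\cdot$const. This follows from the explicit expansion
\[
\he_i(y)=\sqrt{i!}\sum_k \frac{(-1)^k y^{i-2k}}{2^k k!(i-2k)!},
\]
which gives $|\he_i(y)|\le \sqrt{i!}\sum_k \frac{|y|^{i-2k}}{2^kk!(i-2k)!}$. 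For $|y|\le \sqrt{32m}$, each term is at most $(32m)^{(i-2k)/2}/(i-2k)!$. The sum is then at most $\sqrt{i!}\,(C\sqrt m)^i/\Gamma$-type quantities; since $\sqrt{i!}/(i-2k)!\cdot 1/(2^kk!)$ behaves like $i^{i/2}/i^{i-k}$, the total is at most $(Cm)^{i/2}$ using $i\le m$.

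The main obstacle is this last pointwise estimate, i.e., tracking the factorials so as to get exponent $i/2$ rather than $i$. I would handle it by bounding the largest term and multiplying by the number of terms, at most $i$, which is absorbed into $C^{i}$.
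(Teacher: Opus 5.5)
Your argument for the first inequality is correct, but it is organized differently from the paper's. The paper never splits the numerator. It lower-bounds the denominator by $\E[|f(G)|\indi\{|G|\le s\}]$, with $s=\sqrt{32m}$, and factors the ratio multiplicatively:
\[
\frac{\E[|f\he_i|\indi\{|G|\le s\}]}{\E[|f|\indi\{|G|\le s\}]}\cdot\frac{\E|f\he_i|}{\E[|f\he_i|\indi\{|G|\le s\}]}.
\]
The first factor is at most $\sup_{|y|\le s}|\he_i(y)|$. The second is at most $2$: for any $f'$ of degree at most $2m$ (here $f'=f\he_i$), Cauchy--Schwarz with the Gaussian tail, together with the Paley--Zygmund/Bonami bound $\E|f'(G)|\ge e^{-2m}\sqrt{\E[f'(G)^2]}/8$, shows that $\{|G|>s\}$ carries at most half of $\E|f'(G)|$.

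You instead split the numerator additively. You control the tail with three-factor H\"older, $(2\to4)$ hypercontractivity for $f$ and $\he_i$ separately, and the reverse inequality $\|f\|_2\le 3^m\|f\|_1$ obtained by log-convexity. This is self-contained and gives a much smaller tail, at most $(9e^{-8})^m\E|f(G)|$. The price is that the additive split forces an extra comparison of the tail against $\sup_{|y|\le s}|\he_i(y)|$, which the multiplicative factorization avoids.

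That comparison is slightly misstated. Your $L^4$ estimate gives
\[
\sup_{|y|\le s}\he_i(y)^2\ge\E[\he_i(G)^2\indi\{|G|\le s\}]\ge 1-3^ie^{-8m}.
\]
This is a lower bound just below $1$, not $\ge 1$. It suffices anyway, because you only need $\sup_{|y|\le s}|\he_i(y)|\ge(9e^{-8})^m$. Your ``$\max(1,\cdot)$'' alternative would give $1+\sup_{|y|\le s}|\he_i(y)|$, not $2\sup_{|y|\le s}|\he_i(y)|$.

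The ``main obstacle'' you identify for the second inequality is an arithmetic slip. Since $(C(1+\sqrt{32m}))^i\le(7C\sqrt m)^i=(49C^2m)^{i/2}$, \Cref{fact:Hermite}(4) gives the second inequality directly, and this is exactly how the paper finishes. Your factorial sketch is therefore unnecessary, and as written it does not give a checkable bound, since the factor $\sqrt{i!}/(2^kk!)$ is dropped and then vaguely reinserted. The correct bookkeeping is
\[
\frac{\sqrt{i!}}{2^kk!\,(i-2k)!}=\binom{i}{2k}\frac{(2k)!}{\sqrt{i!}\,2^kk!}\lesssim\binom{i}{2k},
\]
which is just the paper's proof of that fact.
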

\begin{proof}
The high-level idea is to argue that the expected value of a degree-$O(m)$ polynomial of standard Gaussians should depend primarily on its behavior in a $\Theta(\sqrt{m})$-sized interval around the origin.
Building on this intuition, we shall lower bound the denominator by $\E[|f(G)|\indi_{|x| \leq \sqrt{32m}}]$ and get the following expression:
\begin{align*}
\sup_{f: \mathrm{deg(f)}\leq m}\frac{\E[f(G)\he_i(G)]}{\E[|f(G)|]} &\leq \sup_{f: \mathrm{deg(f)}\leq m}\frac{\E[|f(G)\he_i(G)|]}{\E[|f(G)|\indi_{|G|\leq \sqrt{32m}}]} \\ 
&\leq \sup_{f: \mathrm{deg(f)}\leq m}\frac{\E[|f(G)\he_i(G)|\indi_{|G|\leq \sqrt{32m}}]}{\E[|f(G)|\indi_{|G|\leq \sqrt{32m}}]} \cdot \sup_{f': \mathrm{deg(f')}\leq 2m}\frac{\E[|f'(G)|]}{\E[|f'(G)|\indi_{|G|\leq \sqrt{32m}}]}\\
&\leq \sup_{y: |y|\leq \sqrt{32m}}|\he_i(y)| \cdot \sup_{f': \mathrm{deg(f')}\leq 2m}\frac{\E[|f'(G)|]}{\E[|f'(G)|\indi_{|G|\leq \sqrt{32m}}]}.
\end{align*}
We shall now formalize this intuition that the second term above can be upper bounded by a constant, in fact, $2$.
That is, for any polynomial $f'$ of degree at most $2m$,
$\E[|f'(G)|\indi_{|G| \geq \sqrt{32m}}] \leq 0.5\E[|f'(G)|]$.

Applying Cauchy-Schwarz and Gaussian concentration\footnote{That is, $\P(|G|\geq t) \leq e^{-t^2/2}$ for any $t\geq 1$; see, for example, \cite[Proposition 2.1.2]{Vershynin18}.},
we get that
\begin{align}
\nonumber \E[|f'(G)|\indi_{|G| \geq \sqrt{32m}}] &\leq \sqrt{\E[f'^2(G)]} \sqrt{\mathbb{P}(|G|\geq \sqrt{32m})} \leq \sqrt{\E[f'^2(G)]} \sqrt{e^{-32m/2}} \\ 
&= \sqrt{\E[f'^2(G)]} e^{-8m}.
\label{eq:gausisan-conc-0}
\end{align}
Unfortunately, this is not quite the result we wanted: $\E[|f'(G)|]$ has been replaced with $\sqrt{\E[f'^2(G)]}$.
These two can be related using hypercontractivity of Gaussian polynomials.
We shall use the following consequence of Bonami lemma and Paley-Zygmund inequality:
\begin{Fact}[{\cite[Corollary D.3]{KunWB19}}]
Let $f':\R \to \R$ be a polynomial with  $\mathrm{deg}(f') \leq 2m$.
Then $\bbP(|f'(G)| > 0.5\sqrt{\E[f'^2(G)]}) \geq 0.25 e^{-2m}$.
\end{Fact}
As a direct consequence, we get that $\E[|f'(G)|] \geq  \sqrt{\E[f'^2(G)]} \exp(-2m)/8$.
Combining this with (\ref{eq:gausisan-conc-0}), we get 
\begin{align}
\label{eq:gausisan-conc-1}
\E[|f'(G)|\indi_{|G| \geq \sqrt{32m}}] \leq8  e^{2m} e^{-8m} \E[|f'(G)|]\leq \frac{1}{2}\E[|f'(G)|].
\end{align}
Finally, we use $|\he_i(x)| \leq (C(1 + |x|))^i$  by \Cref{fact:Hermite}.
\end{proof}

\subsubsection{Proofs of \Cref{lem:est-implies-testing}, \Cref{fact:Hermite} and \Cref{lem:chi-squared-bd}}

\begin{proof}[Proof of \Cref{lem:est-implies-testing}]
Consider an SQ estimation algorithm $\cA (\STAT_{P,\tau})$ and we define the test $\widehat{\theta} = \indi\{\|\cA (\STAT_{P,\tau})\|_2 > \delta/4\}$. Under the condition that $\sigma\in[1/2,1]$ and $R=(1 - \epsilon) \cN(0, \sigma^2) + \epsilon D$ for some $D$, we know that $P\in\bigcup_{(\beta,\sigma): \|\beta\|_2 \leq 1, \sigma\in[1/2,1]}\mathcal{D}_{\beta,\sigma,\epsilon}$ for both $P$ under null (\ref{eq:reg-test-h0}) and alternative (\ref{eq:reg-test-h1}), which implies $\bbP\left(\bigl\|\cA (\STAT_{P,\tau}) -\beta \bigr\|_2 \leq  \sigma\delta/4\right) > 0.9$ for these $P$'s. For $P$ under null (\ref{eq:reg-test-h0}), we have $\|\beta\|_2=0$ and $\sigma=1$, and thus $\bigl\|\cA (\STAT_{P,\tau}) -\beta \bigr\|_2 \leq  \sigma\delta/4$ implies $\wh{\theta}=0$. For $P$ under alternative (\ref{eq:reg-test-h1}), we have $\|\beta\|_2=\delta$ and $\sigma\in[1/2,1]$, and thus $\bigl\|\cA (\STAT_{P,\tau}) -\beta \bigr\|_2 \leq  \sigma\delta/4$ implies $\wh{\theta}=1$ by triangle inequality. Hence, $\widehat{\theta} = \indi\{\|\cA (\STAT_{P,\tau})\|_2 > \delta/4\}$ is an SQ test solving the testing problem (\ref{eq:reg-test-h0})-(\ref{eq:reg-test-h1}).
\end{proof}
\begin{proof}[Proof of \Cref{fact:Hermite}]
The first three facts are standard and we refer the reader to \cite{Szego89}.
The final property also follows by standard properties of Hermite polynomials as follows: The Hermite polynomials have the following expansion $\he_{i}(x) = \sqrt{i!}\sum_{m=0}^{\lfloor i/2\rfloor}\frac{(-1)^m}{m! (i-2m)!}  \frac{x^{i-2m}}{2^m}$ and therefore $|\he_i(x)| \leq   \sum_{m=0}^{\lfloor i/2\rfloor} \frac{(2m!)}{\sqrt{i!} 2^mm!} {i \choose 2m}x^{2m} \lesssim (1 + |x|)^i$ provided $\frac{(2m!)}{2^m\sqrt{i!} m!}\lesssim1$. To see the latter, observe that  the maximum over $m$ is achieved when $2m = i$ and the expression then becomes $\frac{\sqrt{i!}}{2^{i/2}((i/2)!)}$, which by Stirling's approximation is at most  $i^{1/4}\frac{(i/e)^{i/2}}{ 2^{i/2} \sqrt{i/2} (i/(2e))^{i/2}}\asymp i^{-1/4}\lesssim1$.
\end{proof}

\begin{proof}[Proof of \Cref{lem:chi-squared-bd}]
Here, we calculate the $\E_{y \sim R}[\chi^2(A_y, \cN(0,1))]$.
First, let us define $H_y(x) = A_y(x) R(y)$ so as to change the base measure from $y \sim R$ to $y \sim \cN(0,1)$ through the following series of simple inequalities:
\begin{align*}
\E_{y \sim R}\left[\chi^2(A_y, \cN(0,1))\right] + 1 
&=  \int \left(\int \frac{A_y(x)^2}{\phi(x)} dx\right) R(y)dy \\ 
&=  \int \left(\int \frac{H_y(x)^2}{R(y) \phi(x)} dx\right) dy \\ 
&\leq  \frac{1}{(1- \epsilon)} \int \left(\int \frac{H_y(x)^2}{\phi(y) \phi(x)} dx\right) dy \\ 
&= \frac{1}{(1-\epsilon)} \E_{y \sim \cN(0,1)} \left[\int \frac{H_y(x)^2}{\phi^2(y) \phi(x)} dx\right], 
\numberthis 
\label{eq:chi-sq-1}
\end{align*}
where the inequality follows by noting that $R(y) =(1-\epsilon)\phi(y) + \epsilon D(y) \geq (1 - \epsilon) \phi(y)$. 
Using the definition of $A_y$ in (\ref{eq:Ay-alt-form}), we get that
\begin{align*}
\frac{|H_y(x)|}{\phi(y)} &\leq \phi(x;\delta y; 1 -\delta^2) + \epsilon \frac{D(y)}{\phi(y)} D_y(x) \leq \phi(x;\delta y; 1 -\delta^2) + \epsilon \frac{D(y)}{\phi(y)}\phi(x) + \epsilon \frac{D(y)}{\phi(y)}|f_y(x)|,
\end{align*}
where the last inequality uses (\ref{eq:D=phi+f}).
Applying approximate triangle inequality,
we get that the integral is upper bounded as  follows:
\begin{align}
\label{eq:chi-square-2}
\int \frac{H_y(x)^2}{\phi^2(y) \phi(x)} dx \lesssim \int \frac{\phi^2(x;\delta y; 1 -\delta^2)}{\phi(x)}dx + \epsilon^2 \left(\frac{D(y)}{\phi(y)}\right)^2 + \epsilon^2 \int_x \left(\frac{D(y)}{\phi(y)} \frac{f_y(x)}{\phi(x)}\right)^2 \phi(x) dx.  
\end{align}
The first term above equals  $\frac{\exp\left({(\delta^2 y^2)/{(1 + \delta^2)}}\right)}{\sqrt{1 - \delta^4}}$.
To control the second term, we will compute an upper bound on $\frac{D(y)}{\phi(y)}$. Using (\ref{eq:qprime-def}), we get that  $ \frac{D(y)}{\phi(y)} \leq 1+  \sum_{i \in [m]} |a_i(y)|\sup_{x}|g_i(x)|/\phi(x) \leq 1 + \sum_{i \in [m]}|a_i(y)|B_i$ with $B_i=(Cm)^{i/2}$.
A similar upper bound exists for the third term as well:
\begin{align*}
\frac{D(y)}{\phi(y)} \frac{|f_y(x)|}{\phi(x)} \leq  \sum_{i=1}^m  |a_i(y)| \frac{|g_i(x)|}{\phi(x)}  \leq  \sum_{i=1}^m  |a_i(y)| B_i\,.
\end{align*}
Plugging these bounds back to (\ref{eq:chi-square-2}), we obtain that the desired integral is at most
\begin{align*}
\int \frac{H_y(x)^2}{\phi^2(y) \phi(x)} dx  \lesssim 1 + \frac{e^{\frac{\delta^2 y^2 }{1 + \delta^2}}}{\sqrt{1 - \delta^4}} + \left(\sum_{i=1}^m|a_i(y)|B_i\right)^2,
\end{align*}
and the desired expression in (\ref{eq:chi-sq-1}) becomes
\begin{align*}
\E_{y \sim R}\left[\chi^2(A_y, \cN(0,1))\right] &\lesssim 1 + \E_{G \sim \cN(0,1)}\left[e^{\frac{\delta^2 G^2 }{1 + \delta^2}}\right] + \E_{G \sim \cN(0,1)}\left[\left(\sum_{i=1}^m|a_i(G)|B_i\right)^2\right] \\ 
&\lesssim 1 + m \sum_{i=1}^m \E_{G \sim \cN(0,1)}\left[ a_i^2(G) B_i^2 \right]\lesssim 1 + m \sum_{i=1}^m \E_{G \sim \cN(0,1)}\left[ \frac{\delta^{2i}}{\epsilon^2} \he_i^2(G) B_i^2 \right]\\
&= 1 + m \sum_{i=1}^m  \frac{\delta^{2i}}{\epsilon^2} B_i^2  
\leq 1 + m \sum_{i=1}^m \left( C m \delta^2/\epsilon^2  \right)^i \lesssim m,
\end{align*}
where we use that $ Cm \delta^2/\epsilon^2 \leq 1/4$.
\end{proof}

\subsubsection{Proof of \Cref{thm:ThmFormalSQ}}





Recall that we set $\sigma^2=1-\delta^2$ and $Q_X=E_{v^\top X}$ in (\ref{eq:reg-test-h1}), where the conditional distribution $E_{v^\top X}$ is determined by (\ref{eq:diff-factor-contam}) with $D(\cdot)$ and $D_y(\cdot)$ constructed according to (\ref{eq:qprime-def}), (\ref{eq:D=phi+f}) and (\ref{eq:fy-final-form}). This leads to $R$ and $A_y$  given by (\ref{eq:ngca-R}) and (\ref{eq:ngca-Ayx'}) such that the testing problem (\ref{eq:reg-test-h0})-(\ref{eq:reg-test-h1}) is identical to (\ref{eq:con-ngca-test0})-(\ref{eq:con-ngca-test1}), an instance of the conditional NGCA. \Cref{prop:existence-g-intro} guarantees that the construction is valid and $A_y$ matches $m$ moments with $\cN(0,1)$ for all $y\in\R$ as long as $m\leq \frac{\epsilon^2}{4C\delta^2}$. We thus take $m=\floor{\frac{\epsilon^2}{4C \delta^2}}$, and the result will follow from \Cref{lem:sq-hardness-conditional-ngca} after some simplifications.

First, we compute the lower bound on  the number of queries.
\Cref{lem:sq-hardness-conditional-ngca} yields a lower bound of $\ell/\dimension^{(m+1)/4}$ for $\ell = 2^{\Omega(\dimension^{\Omega(1)})}$.
When the condition $\dimension \gtrsim \left( 2m \log \dimension\right)^{\Omega(1)}$ for a large implicit constant $\Omega(1)$ holds, we have that  that $\dimension^{(m+1)/4} \leq \sqrt{\ell}$, and thus the lower bound on the number of queries is at least $\ell/\sqrt{\ell}=\sqrt{\ell} = 2^{\Omega(\dimension^{\Omega(1)})}$.
Since $m \lesssim \epsilon^2/ \delta^2$, this conditioned is satisfied when $\dimension \gtrsim ( \epsilon/\delta)^{\Omega(1)}$, as in the statement of \Cref{thm:ThmFormalSQ}.

Next, we compute an upper bound on the tolerance.
As shown in \Cref{lem:chi-squared-bd},
the construction satisfies that $\E_{y \sim R}[\chi^2(A_y,\cN(0,1))] \lesssim m \lesssim \epsilon^2/\delta^2$.
Therefore, the upper bound on tolerance from \Cref{lem:sq-hardness-conditional-ngca} is $O(\frac{\sqrt{m}}{\dimension^{(m+1)/8}})$,
which is at most $O(\frac{\sqrt{m}}{\dimension^{m/16}})$ when $m  \geq 1$, which holds since $\epsilon/\delta\gtrsim 1$.
Observe that this can be further upper bounded by $O(\frac{1}{\dimension^{m/32}})$, when $m$ is at least a large enough constant\footnote{Indeed, it suffices to establish that $\sqrt{m} \leq 3^{m/32}$, which holds as long as $m$ is at least a large enough constant say $10^4$.}, which again reduces to requiring that  $\epsilon/\delta\gtrsim 1$.
Finally, the hardness of the estimation problem is implied by the hardness of the testing problem as argued in \Cref{lem:est-implies-testing}; see the discussion below \eqref{eq:ngca-R} which shows that $R$ satisfies the assumption of \Cref{lem:est-implies-testing}.


\subsection{Proofs of Results in \Cref{sec:disc}}

This section presents proofs of \Cref{prop:different-models} and \Cref{thm:tail}.

\subsubsection{Proof of \Cref{prop:different-models}}

We establish these one-by-one.
\begin{enumerate}[label=(\roman*)]
\item This is immediate since \modelref{model:adaptive} permits an arbitrary contaminating conditional distribution $Q_X$, and both \modelref{model:obv-1} and \modelref{model:obv-2} are special cases.
  \item We need to compute the TV distance between $P_{\beta,\sigma}$ and $M$, where $M$ is the joint distribution in \Cref{model:non-uni}.
Since the marginals of $X$ is identical between $P_{\beta,\sigma}$ and $M$,
we have that $\TV(P_{\beta,\sigma},M) = \E_{X \sim \cN(0,I_p)}[\TV(P_{\beta,\sigma}(y \mid X), M(y \mid X))]$, which is at most $\E[ \epsilon_X ] \leq \epsilon$. The realtionship with \Cref{model:adaptive} is trivial.

\item The relationship with \Cref{model:huber} is straightforward.
To compare the relationship with \Cref{model:adaptive}, consider the special case where $\epsilon Q(x) = \epsilon \phi(x)$.
That is, $Q(x,y) = Q(x) Q(y \mid x) = \phi(x) Q(y \mid x)$ and observe that this satisfies \Cref{model:adaptive}.

\item The upper bound follows by the minimax rate under \modelref{model:TV} as per \cite{Gao20}.\footnote{The upper bound in \cite{Gao20} is established under Huber contamination. However, a slight modification of the proof also works under TV contamination.}
The lower bound of $\sigma \sqrt{\frac{p}{n}}$ follows from the special case of $\epsilon = 0$.
Thus, it suffices to exhibit a lower bound of $\Omega(\epsilon)$, for which we construct two different lower bound instances. 

\paragraph{Lower bound for \Cref{model:non-uni}.} 
Consider two different linear models $P := P_{\beta,\sigma}$ and $P':=P_{-\beta,\sigma}$.
Define the distribution $M$ where $X \sim \cN(0,I_p)$
and the conditional pdf of $y \mid x$ is $M(y \mid x) = \frac{\max(P(y \mid x),P'(y \mid x))}{\int \max(P(y \mid x),P'(y \mid x)) dy} = \frac{\max(P(y\mid x),P'(y \mid x))}{1 + \TV(P(y \mid x),P'(y \mid x))}$.
For each $x$, we set 
\begin{align*}
\epsilon_x = \frac{\TV(P(y \mid x),P'(y \mid x))}{1 + \TV(P(y \mid x),P'(y  \mid x))} \text{ and  } Q_x = \frac{1}{\epsilon_x}\frac{\max(0,P'(y \mid x) - P(y \mid x) )}{1 + \TV(P(y \mid x),P'(y \mid x))}.
\end{align*}
Then, 
 it can be seen that 
$$(1 - \epsilon_x) P(y \mid x) + \epsilon_x Q_x = M(y \mid x)\,.$$
Furthermore, $Q_x$ is a valid distribution because the right hand side is non-negative and has integral over $y$ equal to $\frac{1}{\epsilon_x} \frac{\TV(P(y \mid x),P'(y \mid x))}{1 + \TV(P(y \mid x),P'(y \mid x))} = 1$.
 An analogous calculation shows that there exist $Q'_x$ such that 
$ (1 - \epsilon_x) P'(y \mid x) + \epsilon_x Q_x' = M(y \mid x)\,$.

To finish the lower bound of $\Omega(\sigma \epsilon)$, it thus suffices to show that $\E[\epsilon_x]\lesssim \|\beta\|_2/\sigma $.
Plugging in the expression aobve, we have that $\E[\epsilon_X] \leq \E[\TV(P(y \mid X),P'(y \mid X))]
= \E[\TV(\cN(\beta^\top X, \sigma^2),\cN( - \beta^\top X, \sigma^2))] \leq \E[|2 \beta^\top X|/\sigma] \lesssim \|\beta\|_2/\sigma$. This concludes the proof for \Cref{model:non-uni}.


\paragraph{Lower bound for \Cref{model:hub-bdd-marg}.}
The lower bound instance for the Huber contamination model would be applicable here. For the same $P$ and $P'$ defined above with some $\beta$, define
$\epsilon = \frac{\TV(P, P')}{1 + \TV(P,P')}$.
We take $Q(x,y)$ to be 
\begin{align*}
Q(x,y)  = \frac{\max(0, P'(x,y) - P(x,y))}{\TV(P,P')} \text { and } Q'(x,y)  = \frac{\max(0, P(x,y) - P'(x,y))}{\TV(P,P')}.
\end{align*}
It can then be verified that $(1- \epsilon)P + \epsilon Q=(1 -\epsilon)P' + \epsilon Q'$
and that both $Q$ and $Q'$ are valid joint distributions over $X$ and $y$.
Finally, 
the marginal of $Q$ is
\begin{align*}
Q(x) &= \int Q(x,y) dy = \frac{ \phi(x) }{\TV(P,P')} \int \max(0, P'(y|x) - P(y|x))dy \\
&= \frac{\phi(x)}{\TV(P,P')} \TV(\cN(\beta^\top X, \sigma^2), \cN(-\beta^\top X, \sigma^2))\,.
\end{align*}
In particular,
$\epsilon Q(x)/\phi(x) \leq \frac{\epsilon}{\TV(P,P')} = \frac{1}{1 + \TV(P,P')} \leq 1$, as desired. Similarly, we also have $\epsilon Q'(x)/\phi(x)\leq 1$, which concludes the proof.
\end{enumerate}

\subsubsection{Proof of \Cref{thm:tail}}

The upper bound follows the same arguments in the proof of \Cref{thm:info-upper}. We obtain the first inequality in the display (\ref{eq:usedlatertail}) with $\Phi$ and $\phi$ replaced by $F$ and $f$. That is,
$$\frac{\|\wh{\beta}-\beta\|_2}{\sigma}\wedge \frac{1}{2t}\leq \frac{4\epsilon(1-F(t)) + 2C\sqrt{\frac{\dimension}{n}}}{tf(1)\mathbb{P}\left(2t>|Z|\geq t\right)},$$
where $Z$ has density $f(t)\propto e^{-|t|^{\gamma}/2}$ and CDF $F(t)=\int_{-\infty}^tf(x)dx$. Similar to the Gaussian case, we have $\frac{(1-F(t))}{\mathbb{P}\left(2t>|Z|\geq t\right)}\leq C_2$ and $\mathbb{P}\left(2t>|Z|\geq t\right)\geq C_3e^{-|t|^{\gamma}}$ for all $t>0$, which leads to the bound
$$\frac{\|\wh{\beta}-\beta\|_2}{\sigma}\wedge \frac{1}{2t}\leq \frac{C_1}{t}\left(\epsilon+\sqrt{\frac{\dimension}{n}}e^{|t|^{\gamma}}\right).$$
We therefore obtain the desired bound when $\sqrt{\frac{\dimension}{n}}+\epsilon<c$ by taking $t=\left(\log(n\epsilon^2/\dimension+e)/3\right)^{1/\gamma}$.

The lower bound follows the arguments in \Cref{sec:lower}. In particular, we need to set $\delta$ so that for some constant $C_4>0$, the inequality $\left(2\delta t/\sigma-\sqrt{\pi/2}\epsilon\right)_+^2e^{-|t|^{\gamma}/4}\leq \frac{C_4\dimension}{n}$ holds for all $t>0$. Rearranging this inequality leads to the choice
$$\delta=\min_{t>0}\left[\frac{\sigma}{2t}\left(\sqrt{\frac{\pi}{2}}\epsilon + \sqrt{\frac{C_4\dimension}{n}}e^{|t|^{\gamma}/8}\right)\right]\asymp \sigma\left(\sqrt{\frac{\dimension}{n}}+\frac{\epsilon}{\left(\log(n\epsilon^2/\dimension+e)\right)^{1/\gamma}}\right),$$
which is the desired rate.

\printbibliography
\end{sloppypar}

\end{document}